\documentclass[reqno,oneside]{amsart}
\usepackage[margin=3cm]{geometry}
\usepackage{amssymb,amsmath}
\usepackage{amsfonts}
\usepackage{amsbsy}
\usepackage{rotating}
\usepackage[T1]{fontenc}
\usepackage{lmodern}
\usepackage{bbm}
\usepackage{mathtools}
\mathtoolsset{showonlyrefs=true}
\usepackage{graphicx}
\usepackage[boxed,oldcommands,norelsize]{algorithm2e}
\usepackage{hyperref}
\usepackage{subcaption}
\usepackage[numbers,sort&compress]{natbib}
\usepackage{multirow}
\usepackage{hhline}
\usepackage{colortbl}
\newcolumntype{L}{>{\centering\arraybackslash}m{0.46\textwidth}}
\usepackage{arydshln}
\usepackage[nolist]{acronym}

\usepackage{comment}
\usepackage{stmaryrd,xparse}
\usepackage{nicefrac}

\newtheorem{lemma}{Lemma}[section]

\newtheorem{definition}{Definition}[section]
\newtheorem{corollary}{Corollary}[section]


\newcommand{\M}{\boldsymbol{\mathsf{M}}}
\newcommand{\sM}{\tilde{\boldsymbol{\mathsf{M}}}}
\newcommand{\oM}{\overline{\boldsymbol{\mathsf{M}}}}

\newcommand{\B}{\boldsymbol{\mathsf{B}}}
\newcommand{\K}{\boldsymbol{\mathsf{K}}}
\newcommand{\sK}{\tilde{\boldsymbol{\mathsf{K}}}}
\newcommand{\oK}{\overline{\boldsymbol{\mathsf{K}}}}
\newcommand{\R}{\boldsymbol{\mathsf{R}}}
\newcommand{\J}{\boldsymbol{\mathsf{J}}}
\newcommand{\0}{\boldsymbol{\mathsf{0}}}
\newcommand{\G}{\boldsymbol{\mathsf{G}}} 
\newcommand{\A}{\mathsf{A}}

\newcommand{\U}{\boldsymbol{\mathsf{U}}}

\newcommand{\rr}{\boldsymbol{r}}

\newcommand{\normal}{\boldsymbol{n}}
\newcommand{\x}{\boldsymbol{x}}

\newcommand{\gradient}{\boldsymbol\nabla}
\newcommand{\cc}{\boldsymbol{c}}

\newcommand{\nn}{\boldsymbol{n}}

\newcommand{\half}{\frac{1}{2}}
\def\ij{{ij}} 
 
\newcommand{\ji}{{ji}}

\newcommand{\der}{{\rm d}}
\newcommand{\derd}{\delta}
\newcommand{\dtnp}{\Delta t_{n+1}}
\newcommand{\RR}{\mathbb{R}}

\newcommand{\id}{\mathbb{I}}
\newcommand{\np}{{n+1}}
\newcommand{\nk}{{k,n+1}}

\newcommand{\vel}{\mathbf{v}}

\newcommand{\conv}{\boldsymbol{f}} 

\newcommand{\force}{g}
\newcommand{\bforce}{\boldsymbol{\force}}

\newcommand{\intenergy}{\imath}
\newcommand{\roesound}{\rev{a_{ij}}}
\newcommand{\detectorComps}{\rev{\mathcal{J}}}


\newcommand{\lone}{L^1}
\newcommand{\ltwo}{L^2}


\newcommand{\inflowboundary}{\Gamma_{\rm in}}
\newcommand{\outflowboundary}{\Gamma_{\rm out}}

\newcommand{\domain}{\Omega}

\newcommand{\mesh}{\mathcal{T}_h}

\newcommand{\element}[1][]{K_{#1}}

\newcommand{\nnodes}{N}
\newcommand{\fespace}{V_h}
\newcommand{\bfespace}{\boldsymbol{V}_h}

\newcommand{\btestspace}{\boldsymbol{V}_{h0}}

\newcommand{\nodes}{\mathcal{N}_h}
\newcommand{\allnodes}{\tilde{\mathcal{N}}_h}

\newcommand{\neighborhood}[1][]{\mathcal{N}_h(\domain_{#1})} 

\newcommand{\symneigh}[1][]{\mathcal{N}^{\sym}_h(\domain_{#1})}

\newcommand{\hangingnodes}{\mathcal{N}^{hg}_h}
\newcommand{\constraint}{\mathcal{M}}
\newcommand{\constraining}{\overline{\mathcal{M}}}
\newcommand{\constmass}{\boldsymbol{\mathcal{M}}}
\newcommand{\constflux}{\boldsymbol{\mathcal{K}}}
\newcommand{\constforce}{\boldsymbol{\mathcal{G}}}

\newcommand{\contunk}{\boldsymbol{u}}
\newcommand{\unk}{u_h}

\newcommand{\bunk}{\contunk_h}

\newcommand{\berror}{\boldsymbol{e}_h}
\newcommand{\estimator}{\eta}
\newcommand{\lestimator}{\tilde{\eta}}

\newcommand{\uboundary}{\overline{u}}
\newcommand{\test}{v_h}
\newcommand{\btest}{\boldsymbol{v}_h}
\newcommand{\shapef}[1][]{\varphi_{#1}}

\newcommand{\smax}{\max{}_{\sigma_h}}

\newcommand{\detector}[1][]{\alpha_{#1}}
\newcommand{\vdetector}[1][]{\boldsymbol{\alpha}_{#1}}

\newcommand{\graphl}{\ell}

\newcommand{\jump}[1]{\left\llbracket #1 \right\rrbracket}
\newcommand{\mean}[1]{%
  \sbox0{%
    \mathsurround=0pt 
    $\left\{\vphantom{#1}\right.\kern-\nulldelimiterspace$%
  }%
  \sbox2{\{}%
  \ifdim\ht0=\ht2
    \{\kern-.625\wd2 \{#1\}\kern-.625\wd2 \}%
  \else
    \left\{\kern-.7\wd0\left\{#1\right\}\kern-.7\wd0\right\}%
  \fi
}

\newcommand{\smthlimit}[1]{Z\left(#1\right)}
\newcommand{\absn}[2][\varepsilon_h]{\left\vert #2 \right\vert_{1,#1}}
\newcommand{\absd}[2][\varepsilon_h]{\left\vert #2 \right\vert_{2,#1}}
\newcommand{\smthdetector}[1][]{ %
	\ifx\empty#1\empty%
	\alpha_{\varepsilon_h} %
	\else %
	\alpha_{\varepsilon_h,#1} %
	\fi}
\newcommand{\vsmthdetector}[1][]{ %
	\ifx\empty#1\empty%
	\boldsymbol{\alpha}_{\varepsilon_h} %
	\else %
	\boldsymbol{\alpha}_{\varepsilon_h,#1} %
	\fi}
\newcommand{\smthspacedetector}[1][]{ %
	\ifx\empty#1\empty%
	\alpha_{\varepsilon_h}^s %
	\else %
	\alpha_{\varepsilon_h,#1}^s %
	\fi}
\newcommand{\smthtimedetector}[1][]{ %
	\ifx\empty#1\empty%
	\alpha_{\varepsilon_h}^t %
	\else %
	\alpha_{\varepsilon_h,#1}^t %
	\fi}
\newcommand{\smthstdetector}[1][]{ %
	\ifx\empty#1\empty%
	\alpha_{\varepsilon_h}^{st} %
	\else %
	\alpha_{\varepsilon_h,#1}^{st} %
	\fi}

\newcommand{\sdetectorAprox}[1][]{ %
 \ifx\empty#1\empty%
    \tilde{\alpha}_{\varepsilon_h} %
 \else %
    \tilde{\alpha}_{\varepsilon_h,#1} %
 \fi}
\newcommand{\artdif}{\nu}
\newcommand{\smthartdif}{\tilde{\nu}}
\newcommand{\sym}{{\rm sym}}
\newcommand{\lambdamax}[1][]{\lambda_{#1}^{\max}}

\newcommand{\rev}[1]{#1}
\newcommand{\revv}[1]{#1}

\graphicspath{{figures/}}
\begin{document}

\begin{acronym}
	\acro{fe}[FE]{finite element}
	\acro{fvm}[FVM]{finite volume methods}
	\acro{dg}[dG]{discontinuous Galerkin}
	\acro{cg}[cG]{continuous Galerkin}
	\acro{dof}[DOF]{degree of freedom}
	\acro{ssp}[SSP]{strong stability preserving}
	\acro{rk}[RK]{Runge Kutta}
	\acro{be}[BE]{Backward Euler}
	\acro{dmp}[DMP]{discrete maximum principle}
	\acro{mp}[MP]{maximum principle}
	\acro{afc}[AFC]{algebraic flux correction}
	\acro{fct}[FCT]{flux corrected transport}
	\acro{led}[LED]{local extremum diminishing}
	\acro{dled}[DLED]{discrete local extremum diminishing}
	\acro{amr}[AMR]{adaptive mesh refinement}
	\acro{pde}[PDE]{partial differential equation}
\end{acronym}

\title[Monotonicity-preserving FE schemes with AMR for hyperbolic problems]{Monotonicity-preserving finite element schemes with adaptive mesh refinement for hyperbolic problems}

\author{ Jes\'us Bonilla$^{1,2}$ \and Santiago Badia$^{2,3}$ }
\thanks{
    $^1$ Universitat Polit\`ecnica de Catalunya, Jordi Girona 1-3, Edifici C1, 08034
	Barcelona, Spain. \\
	\indent$^2$ Centre Internacional de M\`etodes Num\`erics en Enginyeria (CIMNE), Esteve Terradas 5, 08860 Castelldefels, Spain. \\ 
	\indent$^3$ School of Mathematics, Monash University, Clayton, Victoria, 3800, Australia. \\ 
}

\renewcommand{\thefootnote}{\arabic{footnote}}

\maketitle

\begin{abstract} 
This work is focused on the extension and assessment of the monotonicity-preserving  scheme in \cite{badia_monotonicity-preserving_2017} and the local bounds preserving  scheme in \cite{Badia2019c} to hierarchical octree \ac{amr}. Whereas the former can readily be used on this kind of meshes, the latter requires some modifications.  A key question that we want to answer in this work is whether to move from a linear to a nonlinear stabilization mechanism pays the price when combined with shock-adapted meshes. Whereas nonlinear (or shock-capturing) stabilization leads to improved accuracy compared to linear schemes, it also negatively hinders nonlinear convergence, increasing computational cost. We compare linear and nonlinear schemes in terms of the required computational time versus accuracy for several steady benchmark problems. Numerical results indicate that, in general, nonlinear schemes can be cost-effective for sufficiently refined meshes. Besides, it is also observed that it is better to refine further around shocks rather than use sharper shock capturing terms, which usually yield stiffer nonlinear problems.  In addition, a new refinement criterion has been proposed. The proposed criterion is based on the graph Laplacian used in the definition of the stabilization method. Numerical results show that this shock detector performs better than the well-known Kelly estimator for problems with shocks or discontinuities.
\end{abstract}

\noindent{\bf Keywords:} 
Adaptive mesh refinement, Shock capturing, Euler equations, Hyperbolic problems, Discrete maximum principle

\tableofcontents

\pagestyle{myheadings}
\thispagestyle{plain}

\section{Introduction}

Natural phenomena can develop shock waves in different scenarios. A classical example is the shock wave generated by an object traveling faster than sound. The numerical modeling of problems with shocks is still a challenge, especially when the admissible physical solution has some physical constraints, e.g., positivity or non-negativity, that must be preserved at the discrete level to have well-posedness; E.g., the fluid density and temperature are positive quantities in a compressible flow.

Several numerical schemes have been proposed so far to approximate this kind of problems by combining \ac{fvm} or \ac{dg} \acp{fe} for space discretization with explicit time integrators (see \cite{Leveque2002,cockburn_rungekutta_2001,Toro2009,Feistauer2003}). Explicit time integrators are only stable under a Courant-Friedrichs-Levy (CFL) restriction over the time step size, which implies to capture all time scales. Thus, explicit methods are not suitable for problems in which the smallest time scales are not of interest. For instance, the fastest time scales at a confined plasma in a nuclear fusion reactor are not of engineering interest whereas explicit time integration is unaffordable in practical simulations \cite{kritz_fusion_2009}.

Implicit monotonicity-preserving (or at least positivity-preserving) methods are still scarce. As proved by Godunov \cite{Godunov1959}, linear monotonicity-preserving schemes can be at most first-order accurate. For scalar problems (and under some mesh restrictions), Burman and Ern \cite{burman_nonlinear_2002}, Barrenechea and co-workers \cite{Barrenechea2016,barrenechea_edge-based_2016-1}, Kuzmin and co-workers \cite{Kuzmin2017,Kuzmin2012,Lohmann2017a}, and Badia and Hierro \cite{badia_monotonicity-preserving_2014,badia_discrete_2015} have proposed nonlinear schemes that preserve monotonicity and can presumably attain higher order accuracy.\footnote{In this work, schemes with nonlinear stabilization are also referred to as high-order and linear stabilization schemes as low or first-order.} However, these properties \revv{come} at the cost of solving a very stiff nonlinear problem \cite{kuzmin_linearity-preserving_2012}. The authors \cite{badia_differentiable_2017,badia_monotonicity-preserving_2017} have proposed differentiable schemes that improve the nonlinear convergence behavior of previous methods.

For hyperbolic systems of equations, numerical methods are less well developed. For explicit time integration, Guermond and Popov \cite{Guermond2015} have recently proposed a \ac{cg} \ac{fe} scheme that preserves positivity of density and energy under certain CFL-like condition. \rev{More recently, Kuzmin \cite{Kuzmin2020} has extended the previous work to monolithic convex limiting. This allows one to use implicit time integration while preserving positivity. Another approach is \ac{fct} \cite{Lohner1987,Kuzmin2012}. The schemes in \cite{Mabuza2018,Mabuza2019} combine the diffusion operators in \ac{fct} with novel shock-detection techniques to obtain a nonlinear monolithic scheme. Those methods have been shown experimentally to be robust, but lack of a theoretical analysis.} Besides, this strategy also yields very stiff nonlinear problems. Differentiable schemes for compressible flows have been proposed in \cite{Badia2019c} to alleviate (but not eliminate) this problem.

Shocks are non-smooth and localized and thus suitable for \ac{amr} \cite{demkowicz_computing_2006,Verfurth2013}. \ac{amr} allows one to increase the mesh resolution only in the vicinity of shocks or discontinuities. In brief, the \ac{amr} process can be divided into two main ingredients. On the one hand, to estimate the error at each element. On the other hand, to decide which elements need to be refined or coarsened. This iterative process  provides a mesh locally adapted to the features of the problem at hand. As a result, it is a nonlinear approximation scheme which tries to minimize the error for a target computational cost. \rev{In some situations, the optimal order of convergence can be achieved even for solutions with limited regularity using \ac{amr} \cite{demkowicz_computing_2006}, whereas convergence is limited by the regularity of the solution for uniform mesh refinements.} 

In this context, a key question is whether it is computationally more effective to consider a nonlinear high-order scheme (with the nonlinear convergence issues) or a cheaper linear (first-order) scheme in a more refined mesh. The motivation of this work is to shed light on this issue. First, we adapt the schemes developed in \cite{Badia2019c,badia_monotonicity-preserving_2017} to hierarchical octree \ac{amr} \cite{TiankaiTu2005,Badia2019d}. Next, we propose a refinement criterium that relies on information already present in the stabilization technique; nonlinear stabilization methods include a shock detector to activate the artificial diffusion only close to discontinuities. We propose to use a modification of the shock detector in \cite{badia_monotonicity-preserving_2017} to drive the \ac{amr} process.

This paper is structured as follows. First, we introduce the problem, its discretization, and monotonicity properties for scalar problems and hyperbolic systems in Sect.\ \ref{sec.preliminaries}. Then, the stabilization techniques are introduced in Sect.\ \ref{sec.stabilization}. Sect.\ \ref{sec.adaptivity} is devoted to the \ac{amr} strategy. We introduce the nonlinear solvers in Sect.\ \ref{sec.solvers}. Finally, we show numerical experiments in Sect.\ \ref{sec.numerical-exp} and draw some conclusions in Sect.\ \ref{sec.conclusions}.

\section{Preliminaries}\label{sec.preliminaries}

\subsection{Continuous problem}

Let us consider an open bounded and connected domain, $\domain\subset\RR^d$, where $d$ is the number of spatial dimensions. Let $\partial\domain$ be the Lipschitz continuous boundary of $\domain$. The conservative form of a first order hyperbolic problem reads
\begin{equation}\label{eq.continuous-problem}
\arraycolsep=1.4pt\def\arraystretch{1.1}
\left\{\begin{array}{rcll}
\partial_t \contunk + \gradient\cdot\conv(\contunk) & = & 
\bforce, & \text{in } \domain\times(0,T], \\
u^\beta(x,t) & = & \bar{u}^\beta(x,t), & \text{on } 
\inflowboundary^\beta\times(0,T],\; \beta = 1,...,m,\\
\contunk(x,0) & = & \contunk_0(x), &  x\in\domain,
\end{array}
\right.
\end{equation}
where $\contunk = \{ u^\beta \}_{\beta=1}^m$ are $m\geq 1$ conserved variables, $\conv$ is the physical flux, $\bar{u}^\beta(x,t)$ are the boundary values for the $\beta$th-component of $\contunk$, $\contunk_0(x)$ are the initial conditions, and $\bforce(x,t)$ is a function defining the body forces. Note that the flux, $\conv : \RR^m \to \RR^{m\times d}$, is composed of $\conv = \{\conv_i\}_{i=1}^d$, where $\conv_i : \RR^m \to \RR^m$ is the flux in the $i$th spatial direction. We denote by $\conv^\prime:\RR^m \to \RR^{m\times m\times d}$ the flux Jacobian. Let $\normal\in\RR^d$ be any direction vector. Since the system is hyperbolic, the flux Jacobian in any direction is diagonalizable and has only real eigenvalues, i.e., $\conv^\prime(\contunk)\cdot\normal=\sum_{i=1}^{d} \conv_i^\prime(\contunk)n_i $ is diagonalizable with real eigenvalues $\{\lambda_\beta\}_{\beta=1}^m$. These eigenvalues might have different multiplicities and different signs. Hence, for a given direction $\normal$, each characteristic variable might be convected forward (along $\normal$) or backwards (along $-\normal$). Therefore, it is convenient to define inflow and outflow boundaries for each component. The inflow boundary for component $\beta$ is defined as $\inflowboundary^\beta\doteq\{\x\in\partial\domain \ :\ \lambda_\beta(\conv^\prime(\contunk)\cdot \normal_{{\partial\domain}}) \leq 0\}$, where $\normal_{\partial\domain}$ is the unit outward normal to the boundary and $\lambda_\beta$ is the $\beta$th-eigenvalue of the flux Jacobian. We define the outflow boundary as $\outflowboundary^\beta\doteq\partial\domain\backslash\inflowboundary^\beta$. We refer the reader to \cite{Feistauer2003,Toro2009,Gurris2009} for a detailed discussion on boundary conditions for hyperbolic problems.
In the present study, we will also consider the steady counterpart of \eqref{eq.continuous-problem}, which is obtained by dropping the time derivative term and the initial conditions.

In this work, we work with both scalar convection equations and Euler equations. Taking $m=1$ and $\conv(u) \doteq \vel u$ with $\vel$ a divergence-free convection field, we recover the well known scalar transport problem. On the other hand, Euler equations for ideal gases are recovered by defining $m=d+2$ and
\begin{equation}
\contunk\doteq \left(\begin{array}{c}
\rho \\ \boldsymbol{m} \\ \rho E
\end{array}\right),\quad \conv\doteq \left(\begin{array}{c}
\boldsymbol{m} \\ \boldsymbol{m}\otimes\vel + p \id \\ \vel(\rho E+p)
\end{array}\right),\, \  \hbox{and} \ \ \bforce\doteq\left(\begin{array}{c}
0 \\ \boldsymbol{b} \\ \boldsymbol{b}\cdot\vel + r
\end{array}\right),
\end{equation}
where $\rho$ is the density, $E$ is the total energy, $p$ is the pressure, $\boldsymbol{m} = \{{m}_1,\ldots,{m}_d\}$, where $m_i = \rho v_i$, is the momentum, $\vel = \{v_1,\ldots,v_d\}$ is the velocity, $\boldsymbol{b} = \{{b}_1,\ldots,{b}_d\}$ are the body forces, $r$ is an energy source term per unit mass, and $\mathbb{I}$ is the identity matrix of dimension $d\times d$. In addition, the system is equipped with the ideal gas equation of state $p=(\gamma-1) \rho \intenergy$, where $\intenergy=E-\half\|\vel\|^2$ is the internal energy and $\gamma$ is the adiabatic index. 

\subsection{Discretization}

The discretization used in this work is able to adapt its local size to the features of the problem at hand. In particular, it is a hierarchically refined octree--based hexahedral mesh \cite{TiankaiTu2005}. This type of discretizations are constructed hierarchically. At every step of the refinement process, marked cells are refined into four (eight) cells in 2D (3D). The adaptation of the mesh to the problem at hand is achieved by only marking for refining a targeted amount of cells. This results in a mesh with different refinement levels at different regions. \emph{Hanging} nodes appear at the interface between cells at different refinement levels. These are nodes that only belong to the cells at a higher refinement level (see Fig.~\ref{fig.hanging-node}). In our case, the meshes used are \emph{2:1 balanced}. This restriction implies that there can only be a difference of one refinement level between neighboring cells. This restriction is a trade-off between implementation complexity and performance gain that has been adopted by many \ac{amr} codes \cite{TiankaiTu2005}.

\begin{figure}[h]
	\centering
	\includegraphics[width=0.3\textwidth]{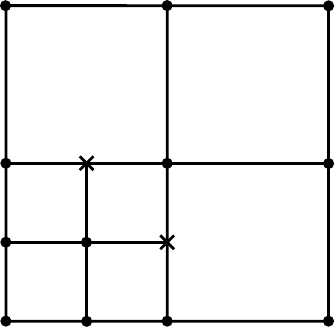}
	\caption{Example of a mesh with \emph{hanging} nodes.}
	\label{fig.hanging-node}
\end{figure}

Hanging nodes need to be treated carefully in the case of working with conforming \ac{fe} discretizations. Otherwise, associating a regular \ac{dof} to a hanging node may lead to discontinuities in the approximated solution. To preserve continuity of the \ac{fe} space, \emph{hanging \acp{dof}} values are not included in the assembled system of equations but obtained by interpolating the values of the neighboring \emph{regular} \acp{dof}. For more details in the definitions of these \revv{conformity} constraints we refer the reader to \cite{badia_fempar:_2017,Badia2019,Badia2019d}.

Let $\mesh$ be a hierarchical octree-based partition of $\domain$. Consider a Lagrangian (nodal) \ac{fe} space on top of this mesh. The set of all nodes in the \ac{fe} space is represented with $\allnodes$. For every node $i \in \allnodes$, $\x_i$ stands for the node coordinates. We can split $\allnodes$ into two subsets, namely the set of hanging nodes $\hangingnodes$ and the set of conforming nodes  $\nodes\doteq\allnodes\backslash\hangingnodes$. We denote by $\nnodes \doteq \mathrm{card}(\nodes)$ the total number of conforming nodes. The set of nodes belonging to a particular element $\element \in \mesh$ is defined as $\nodes(\element)\doteq\{i\in\nodes : \x_i\in \element\}$. Moreover, $\domain_i$ is the macroelement composed by the union of elements that contain node $i$, i.e., $\domain_i\doteq \bigcup_{\element\in\mesh, \; \x_i\in \element}  \element$. To simplify the discussion below, we abuse notation and use $i$ for both the node and its associated index.

We restrict the present work to first order \acp{fe} and define the \ac{fe} space as follows. We define $\bfespace\doteq \big \{\btest\in(\mathcal{C}^0(\domain))^m : \btest|_K\in({Q}_1(\element))^m \forall K\in\mesh\big\}$, where $Q_1(\element)$ is the space of polynomials of partial degree less than or equal to one. Furthermore, we define the space $\btestspace\doteq\{\btest\in\bfespace\ :\ \btest(\x)=0\ \forall\x\in\inflowboundary\}$.
The functions $\btest\in\bfespace$ can be constructed as a linear combination of the basis $\{\shapef[i]\}_{i\in\allnodes}$ and nodal values $\boldsymbol{v}_i$, where $\shapef[i]$ is the shape function associated to the node $i$. Hence, $\btest=\sum_{i\in\allnodes}\shapef[i]\boldsymbol{v}_i$.

We use standard notation for Sobolev spaces. The $\ltwo(\omega)$ scalar product is denoted by $(\cdot,\cdot)_\omega$ for $\omega\subset\domain$. However, we omit the subscript for $\omega\equiv\domain$. The $\ltwo$ norm is denoted by $\Vert\cdot\Vert$. 

The method of lines is applied in combination with the \ac{fe} spaces described above for the spatial discretization. We approximate the solution $\contunk\approx\bunk=\sum_{i\in\allnodes} \shapef[i]\contunk_i$. \revv{In addition, we make use of the group--FEM approximation \cite{Fletcher1983}. Hence, fluxes are discretized in the same \ac{fe} space as the unknown, i.e.} $\conv \approx \conv_h =\sum_{i\in\allnodes} \shapef[i]\conv(\contunk_i)$. For simplicity in the exposition, we use the \ac{be} scheme for the time discretization; higher order time discretizations can be achieved using \ac{ssp}--\ac{rk} methods (see \cite{Gottlieb2001}). In the latter case, a CFL-like condition must be satisfied to enjoy the monotonicity properties in Sect. \ref{sec.stability} (see \cite{kuzmin_algebraic_2005,kuzmin_flux_2002}). 

The semi-discrete Galerkin \ac{fe} approximation of the weak form of \eqref{eq.continuous-problem} reads: 
find $\bunk\in\bfespace$ such that $\unk^\beta=\bar{u}^\beta_h$ on $\inflowboundary^\beta$, $\bunk=\boldsymbol{u}_{0h}$ at $t=0$, and
\begin{equation} \label{eq.discrete-problem}
(\partial_t\bunk,\btest) - (\bunk,\conv_h^{\prime}
(\bunk):\gradient\btest) + (\bunk,
\normal_{\outflowboundary}\cdot\conv_h^\prime(\bunk)\btest)_{\outflowboundary} 
= (\bforce,\btest),\; \mbox{for all } \btest\in\btestspace,
\end{equation}
where $\uboundary^\beta_h$ and $\contunk_{0h}$ are admissible \ac{fe} approximations of the boundary and initial conditions $\uboundary^\beta$ and $\contunk_0$. 
In this context, we consider admissible any approximation that satisfies the maximum principle, i.e., it does not introduce new extrema. \rev{Notice that boundary conditions are strongly imposed. For transonic, or complex problems, this strategy might lead to convergence issues. However, in the present paper we use this strategy for the sake of simplicity. As previously mentioned, we refer the reader to \cite{Feistauer2003,Toro2009,Gurris2009} for a detailed discussion on boundary conditions for hyperbolic problems.} Note that the double contraction is applied as $\conv_h^{\prime}
(\bunk):\gradient\btest = \sum_{k,\gamma} \conv^{\prime}_{h}
(\bunk)_k^{\beta\gamma}\ \boldsymbol{v}_{h\,\gamma,k} $. 

As commented above, we need to apply constraints to all hanging \acp{dof} to keep conformity. The value of the hanging \ac{dof} needs to be equal to the value of the interpolation of the unknown at the neighboring coarser elements. That is, given $i\in\hangingnodes$ and \revv{one of} its neighboring (coarse) \ac{fe} $\element\in\mesh$, $\boldsymbol{v}_i=\sum_{j\in\nodes(\element)} \shapef[j](\x_i)\boldsymbol{v}_j$. In general, we will represent this constraint with $\boldsymbol{v}_i=\sum_{j\in\constraining(i)} C_{ij}\boldsymbol{v}_j$, \rev{where $\constraining(i)$ is the set of \acp{dof} constraining \ac{dof} $i$, and $C_{ij}\doteq \shapef[j](\x_i)$. It is also useful to define $\constraint(i)$, which is the set of \acp{dof} constraint by $i$}. For details of the implementation of this kind of constraints see \cite{badia_fempar:_2017,Badia2019,Badia2019d}. 

Finally, to obtain the fully discrete problem, we consider a partition of the time domain $(0,T]$ into $n^{ts}$ sub-intervals \revv{$(t^n,t^{n+1}]$ of length $\Delta t_{n+1}$}. Then, at every time step $n = 0,\ldots,n^{ts}-1$, the discrete problem consists in solving
\begin{equation}\label{eq.discrete-matrix-problem}
\M \derd_t \U^{n+1} + \K\U^{n+1} = \G,
\end{equation}
where $\U^{n+1}\doteq [\contunk_1^{n+1},...,\contunk_{\nnodes}^{n+1}]^T$ is the vector of nodal values at time $t^{n+1}$, $\derd_t (\U) \doteq \dtnp^{-1} (\U^{n+1} - \U^n)$, and $\dtnp \doteq ( t^{n+1} - t^n)$. The $m\times m$-matrices relating nodes $i,j\in\nodes$ are given by
\begin{align}
\M_{\ij}^{\beta\gamma} &\doteq(\shapef[j],\shapef[i]) \delta_{\beta\gamma} + \constmass_{\ij}^{\beta\gamma}, \\
\K_{\ij}^{\beta\gamma} &\doteq -(\shapef[j]\delta_{\beta\xi},\conv^{\prime}_k (\contunk_j^\np)^{\xi\eta}\cdot\partial_k \shapef[i]\delta_{\eta\gamma}) + (\shapef[j]\delta_{\beta\xi},n_k\cdot\conv^\prime_k(\contunk_j^\np)^{\xi\eta} \shapef[i]\delta_{\eta\gamma})_{\outflowboundary} + \constflux_{\ij}^{\beta\gamma}, \\
\G_i^\beta&\doteq(\bforce^\beta,\shapef[i]) + \constforce_{i}^{\beta},
\end{align}
where Einstein summation applies, $\beta,\gamma,\xi,\eta \in  \{1,\ldots,m\}$ are the component indices, $\delta_{\beta\gamma}$ is the Kronecker delta, and $\constmass$, $\constflux$, and $\constforce$ are the terms arising from applying the \revv{conformity} constraints in the mass, flux and body forces terms.

\subsection{Stability properties}\label{sec.stability}

Finally, let us review some concepts required for discussing the stabilization method used in the subsequent sections. Let us recall some definitions used for scalar problems.
\begin{definition}[Local discrete extremum]\label{def.extremum}
	The function $\test\in\fespace$ has a local discrete minimum (resp. maximum) on $i\in\nodes$ if $u_i\leq u_j$ (resp. $u_i\geq u_j$) ${\forall j\in\neighborhood[i]}$.
\end{definition}
\begin{definition}[Local \ac{dmp}]\label{def.local-dmp}
	A solution $\unk\in\fespace$ satisfies the local discrete maximum principle if for every $i\in\nodes$
	\begin{equation}
	\min_{j\in\neighborhood[i]\backslash\{i\}} u_j \leq u_i \leq \max_{j\in\neighborhood[i]\backslash\{i\}} u_j.
	\end{equation}
\end{definition}
\begin{definition}[LED]\label{def.led}
	A scheme is local extremum diminishing if, for every $u_i$ that is a local discrete maximum (resp. minimum),
	\begin{equation}
          \frac{\der u_i}{\der t} \leq 0, \qquad \qquad \left(\hbox{resp.} \ \frac{\der u_i}{\der t} \geq 0 \right),
	\end{equation}
	is satisfied.
\end{definition}

One possible strategy to satisfy the above properties consists in designing a scheme that yields a positive diagonal mass matrix and a stiffness matrix that satisfies 
\begin{equation}\label{eq.m-matrix}
\sum_j \A_{\ij} = 0, \quad\text{and}\quad \A_{\ij} \leq  0,  \ i\neq j. 
\end{equation}
In this case, it is possible to rewrite the system as
\begin{equation}\label{eq.general-led-problem}
m_i \derd_t u^\np + \sum_{j\in\neighborhood[i]\backslash\{i\}} \A_{\ij} (u_j^\np - u_i^\np) = 0, \quad \forall \, i\in\nodes.
\end{equation}
\revv{As shown in \cite{kuzmin_flux_2002}, such a scheme is \ac{led}. Moreover, for the steady scheme obtained by dropping the transient term, property \eqref{eq.m-matrix} leads to solutions that satisfy the local \ac{dmp} \cite{codina_discontinuity-capturing_1993}.}

Stability properties for hyperbolic systems can be based on the extension of the above to hyperbolic systems in characteristic variables. In this direction, we define local bounds preserving schemes as follows.
\begin{definition}\label{def.system-led}
	The discrete scheme
	\begin{equation}
	\sum_{j}\M_\ij\derd_t \contunk_i^\np + \sum_{j\neq i} \A_{\ij} (\contunk_j^\np - \contunk_i^\np ) = \boldsymbol{0}
	\end{equation}
	is said to be local bounds preserving if $\M$ is diagonal with positive entries (i.e., $\M_{\ij}=m_i\delta_{\ij} I_{m\times m}$), $\A_{\ij}$ has non-positive eigenvalues for every $j\neq i$, and $\sum_j\A_{\ij}=\boldsymbol{0}$.
\end{definition}
Unfortunately, to the best of our knowledge, satisfying this definition does not ensure positivity of density, internal energy, or non-decreasing entropy. In any case, numerical schemes based on this definition have shown good numerical behavior \cite{Kuzmin2003,Kuzmin2005,Lohmann2016,Mabuza2018}. 

Several stabilization strategies have been defined based on the previous ideas. One of the most simple strategies consists in adding a scalar artificial diffusion term proportional to the spectral radius of $\A_{\ij}$ \cite{Lohner2004,Kuzmin2005}. This strategy is usually called Rusanov artificial diffusion, since the scheme results in the Rusanov Riemann solver for linear \acp{fe} in one dimension \cite{Kuzmin2005,Toro2009}. Without any special treatment, the resulting scheme is only first order accurate. The key for recovering high-order convergence is to modulate the action of the artificial diffusion term, and restrict its action to the vicinity of discontinuities. In the present work, our stabilization term for systems of equations is based on Rusanov artificial diffusion and a differentiable shock detector recently developed for scalar problems \cite{badia_monotonicity-preserving_2017,Bonilla2019a}. 

Finally, it is also important to define the concept of linearity preservation.

\begin{definition}\label{def.lp}
Given $\bunk\in\bfespace$ \rev{and $\detectorComps$ the set of conservative variables that are used to detect inadmissible values of $\unk$}, a stabilization scheme is said to be linearity preserving if \revv{the stabilization vanishes} at any region such that $\unk^\beta \in P_1(\Omega)\, \forall\, \beta\in \detectorComps$.
\end{definition}

\section{Nonlinear stabilization}\label{sec.stabilization}

In this section, we describe the additional terms used for the \revv{stabilization of problem \eqref{eq.discrete-matrix-problem}}. In particular, we use the stabilization terms defined in \cite{badia_monotonicity-preserving_2017} for the scalar problem and \cite{Badia2019c} for Euler:
\begin{equation}\label{eq.stab-term}
B_h(\boldsymbol{w}_h;\bunk,\btest)\doteq 
\left\{\begin{array}{ll}
\sum_{i\in\nodes}\sum_{j\in\neighborhood[i]} \artdif_\ij(w_h) v_i u_j \graphl(i,j), & \text{for } m=1, \\
\sum_{\element[e]\in\mesh}\sum_{i,j \in \nodes(\element[e])} \artdif^e_{\ij}(\boldsymbol{w}_h)\graphl(i,j) \boldsymbol{v}_i \cdot  I_{m\times m} \boldsymbol{u}_j, & \text{for } m>1,
\end{array}\right.
\end{equation}
for any $\boldsymbol{w}_h, \ \bunk, \ \btest\in\bfespace$. Here, $\graphl$ is the graph-Laplacian operator defined as $\graphl(i,j)\doteq 2\delta_\ij-1$ (see \cite{guermond_second-order_2014,badia_monotonicity-preserving_2017}). In the case of a scalar problem, $m=1$, the nodal artificial diffusion $\artdif_\ij(w_h)$ is defined as
\begin{align}\label{eq.artificial-diffusion-scalar}
\artdif_\ij(w_h)&\doteq\max\{\detector[i](w_h)\K_\ij,0,\detector[j](w_h)\K_\ji\} \quad \text{for}\quad j\in\neighborhood[i]\backslash\{i\}, \\
\artdif_{ii}(w_h)&\doteq\displaystyle\sum_{j\in\neighborhood[i]\backslash\{i\}} \artdif_\ij(w_h).
\end{align}
We denote by $\detector(w_h)$ the scalar shock detector used for computing the artificial diffusion parameter. In the case of the Euler equations,
the element-wise artificial diffusion $\artdif^e_{\ij}(\boldsymbol{w}_h)$ is defined as
\begin{equation}\label{eq.artificial-diffusion}
\begin{aligned}
\artdif^e_{\ij}(\boldsymbol{w}_h) &  \doteq\max\left(\vdetector[i](\boldsymbol{w}_h)\lambdamax[ij],\vdetector[j](\boldsymbol{w}_h)\lambdamax[ji]\right) \\ 
& + \sum_{k\in\constraint(i)} C_{ki} \max\left(\vdetector[k](\boldsymbol{w}_h)\lambdamax[kj],\vdetector[j](\boldsymbol{w}_h)\lambdamax[jk]\right) \\ 
& + \sum_{k\in\constraint(j)} C_{kj} \max\left(\vdetector[i](\boldsymbol{w}_h)\lambdamax[ik],\vdetector[k](\boldsymbol{w}_h)\lambdamax[ki]\right) \\
& + \sum_{k\in\constraint(i)\cap\constraint(j)} C_{ki} C_{kj} \vdetector[k](\boldsymbol{w}_h)\lambdamax[kk], \quad  
\text{for } j\in\neighborhood[i]\backslash\{i\}, \\
\artdif^e_{ii}(\boldsymbol{w}_h)&\doteq\displaystyle \sum_{j\in\neighborhood[i]\backslash\{i\}} \artdif^e_{\ij}(\boldsymbol{w}_h),\quad
\end{aligned}
\end{equation}
where $\lambdamax[ij]$ is the spectral radius of the elemental convection matrix relating nodes $i,j\in\nodes$, i.e.,  $\rho\left(\conv^\prime(\contunk_{\ij})\cdot(\gradient\shapef[j],\shapef[i])_{\element[e]}\right)$, where $\contunk_\ij$ is the Roe average between $\contunk_i$ and $\contunk_j$ (see \eqref{eq.roe-average}). \revv{Notice that for $\artdif^e_\ij$ and $k\in\hangingnodes$, $\lambdamax[ik]$ is actually the spectral radius of $\rho\left(\conv^\prime(\contunk_{\ij})\cdot(\gradient\shapef[k],\shapef[i])_{\element[e]}\right)$. This is a direct consequence of using a \ac{fe} approximation of the fluxes.}
\rev{As previously discussed, this artificial diffusion term is based on Rusanov scalar diffusion \cite{Kuzmin2012}. It is important to mention that, given any direction vector $\nn$, the eigenvalues $\conv^\prime(\contunk_\ij)\cdot\nn$ of the Roe-averaged flux Jacobian in that direction can be easily computed as
\begin{equation}\label{eq.eigenvalues}
\lambda_{1,..,d}=\vel_{\ij} \cdot \nn,\quad \lambda_{d+1}=\vel_{\ij}\cdot \nn - \roesound\|\nn\|, \quad \lambda_{d+2}=\vel_{\ij}\cdot \nn + \roesound\|\nn\|,
\end{equation}
where the velocity $\vel_\ij$ and sound speed $\roesound$ are computed using the Roe-averaged values
\begin{align}\label{eq.roe-average}
\roesound = \sqrt{(\gamma -1)\left(H_{\ij} - \frac{\|\vel_{\ij}\|^2}{2}\right)}, & 
\quad \vel_{\ij} = \frac{\vel_i\sqrt{\rho_j}+\vel_j\sqrt{\rho_i}}{\sqrt{\rho_i}+\sqrt{\rho_j}}, \\ 
H_{\ij} = \frac{H_i\sqrt{\rho_i}+H_j\sqrt{\rho_j}}{\sqrt{\rho_i}+\sqrt{\rho_j}},& \quad H_i=\rev{E_i +} \frac{p_i}{\rho_i},\quad\text{and}\quad \rho_{\ij} = \sqrt{\rho_i\rho_j}.
\end{align}
This property greatly simplifies the computation of $\rho\left(\conv^\prime(\contunk_{\ij})\cdot(\gradient\shapef[j],\shapef[i])_{\element[e]}\right)$.}

We denote by $\vdetector[i](\boldsymbol{w}_h)$ the shock detector used for modulating the action of the artificial diffusion term. The idea behind the definition of this detector is to minimize the amount of artificial diffusion introduced while stabilizing any oscillatory behavior. \revv{We want to ensure that the resulting stabilized scheme satisfies Def. \ref{def.system-led} in regions where the local \ac{dmp} is violated (see Def. \ref{def.local-dmp}) by a given set of components}. $\vdetector[i](\boldsymbol{w}_h)$ must be a positive real number that takes value 1 when $\bunk(\x_i)$ is an inadmissible value of $\bunk$, and smaller than 1 otherwise. To this end, we define 
\begin{equation}\label{eq.system-detector}
\vdetector[i](\bunk)\doteq \textstyle\max \{\detector[i](\unk^\beta)\}_{\beta\in \detectorComps}, \quad\forall i\in\nodes
\end{equation}
where $\detectorComps$ is the set of components that are used to detect inadmissible values of $\bunk$, e.g. density and total energy in the case of Euler equations. For simplicity, we restrict ourselves to the components of $\bunk$. However, derived quantities such as the pressure or internal energy can be also used. For scalar equations, since the stabilization is defined for the assembled system, the shock detector $\vdetector[i]$ only needs to be defined for $i\in\nodes$. However, for the elemental definition used for Euler equations, it is also required for $i\in\hangingnodes$. In that case, we use the maximum of its constraining nodes, i.e., 
\begin{equation}
  \vdetector[k](\bunk)\doteq \max_{j\in\constraining(k)} \vdetector[j](\bunk) \quad \rev{\hbox{for } k\in\hangingnodes}. 
\end{equation}
Let us recall some useful notation from \cite{badia_monotonicity-preserving_2017} to introduce the scalar shock detector $\detector[i](w_h)$. Let $\rr_{\ij} = \x_j - \x_i$ be the vector pointing from node $\x_i$ to $\x_j$ with $i,j\in\nodes$ and $\hat{\rr}_{\ij} \doteq \frac{\rr_{\ij}}{|\rr_{\ij}|}$. Recall that the set of points $\x_j$ for $j\in\neighborhood[i]\backslash\{i\}$ define the macroelement $\domain_i$ around node $\x_i$. Let $\x_{\ij}^\sym$ be the point at the intersection between $\partial\domain_i$ and the line that passes through $\x_i$ and $\x_j$ that is not $\x_j$ (see Fig. \ref{fig.usym}). The set of all $\x_{\ij}^\sym$ for all $j\in\neighborhood[i]\backslash\{i\}$ is represented with $\symneigh[i]$. We define $\rr_{\ij}^\sym \doteq \x_{\ij}^\sym - \x_i$. 
We define \revv{$\contunk_\ij^\sym$} as the value of $\bunk$ at $\x_{\ij}^\sym$, i.e., $\bunk(\x_{\ij}^\sym)$.
\begin{figure}[h]
	\centering
	\includegraphics[width=0.25\textwidth]{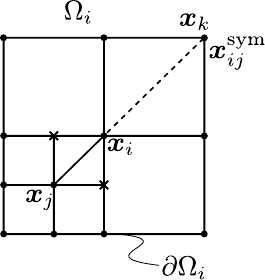}
	\caption{$u^\sym$ drawing}
	\label{fig.usym}
\end{figure}

Both $\contunk_{\ij}^\sym$ and $\x_{\ij}^\sym$ are only required to construct a linearity preserving shock detector. Let us define the jump and the mean of a linear approximation of component $\beta$ of the unknown gradient at node $\x_i$ in direction $\rr_{\ij}$ as
\begin{equation}\label{eq.jump}
\jump{\gradient \unk^\beta}_{\ij} \doteq \frac{u_j^\beta - u_i^\beta}{|\rr_{\ij}|} + \frac{\revv{u_\ij^{\sym,\beta}} - u^\beta_i}{|\rr_{\ij}^\sym|}, 
\end{equation}
\begin{equation}\label{eq.mean}
\mean{|\gradient \unk^\beta\cdot \hat{\rr}_{\ij}|}_{\ij} \doteq \frac{1}{2}
\left(\frac{|u_j^\beta-u_i^\beta|}{|\rr_{\ij}|}+\frac{|\revv{u_\ij^{\sym,\beta}}-u_i^\beta|}{|\rr_{\ij}^\sym|}\right).
\end{equation}
In the present work, for each component in $\detectorComps$, we use the same shock detector developed in \cite{badia_monotonicity-preserving_2017}. Let us recall its definition
\begin{equation}\label{eq.detector}
\detector[i](\unk^\beta)\doteq\left\{\begin{array}{cc}
\left[\dfrac{\left|\sum_{j\in\neighborhood[i]} \jump{\gradient\unk^\beta}_{\ij}\right|}{\sum_{j\in\neighborhood[i]}2\mean{\left|\gradient\unk^\beta\cdot\hat{\rr}_{\ij}\right|}_{\ij}}\right]^q & \text{if}\quad \sum_{j\in\neighborhood[i]} \mean{\left|\gradient\unk^\beta\cdot\hat{\rr}_{\ij}\right|}_{\ij} \neq 0 \\
0 & \text{otherwise}
\end{array}
\right. ,
\end{equation}
\revv{where $q>1$ is a parameter that minimizes the amount of artificial diffusion introduced.}
We know from \cite[Lm. 3.1]{badia_monotonicity-preserving_2017} that \eqref{eq.detector} gets values between 0 and 1, and it is only equal to one if $\unk^\beta(\x_i)$ is a local discrete extremum.
Since the linear approximations of the unknown gradients are exact for $\unk^\beta\in\mathcal{P}_1$, the shock detector vanishes when the solution is linear. Thus, it is also linearly preserving for every component in $\detectorComps$. This result follows directly from \cite[Th. 4.5]{badia_monotonicity-preserving_2017}.

\revv{The shock detector is properly defined for any interior node. However, $u_\ij^\beta$ might not seem easy to compute for nodes at the boundary. For nodes at Dirichlet boundaries, $\x_i\in\inflowboundary$, the shock detector is set to 0, since the value of the unknown is fixed. For weak Dirichlet boundary conditions, we refer the reader to \cite{badia_differentiable_2017}, where appropriate definitions are developed such that monotonicity and linearity are preserved. In the case of $\x_i\in\outflowboundary$, $\x_\ij^\sym$ might lay on top of node $\x_i$ for some directions. Hence, the second fraction in \eqref{eq.jump} and \eqref{eq.mean} becomes undefined. In these cases, this term is dropped and we use
\begin{equation}
\jump{\gradient \unk^\beta}_{\ij} \doteq \frac{u_j^\beta - u_i^\beta}{|\rr_{\ij}|} \quad \text{and}\quad
\mean{|\gradient \unk^\beta\cdot \hat{\rr}_{\ij}|}_{\ij} \doteq \frac{1}{2}
\left(\frac{|u_j^\beta-u_i^\beta|}{|\rr_{\ij}|}\right).
\end{equation}
This definition still ensures that the shock detector takes value 1 for extreme values at $\x_i$ (see \cite[Lm. 3.1]{badia_monotonicity-preserving_2017}). However, linearity preservation in the direction normal to $\outflowboundary$ is lost unless the unknown is constant in that direction.}

The final stabilized problem in matrix form reads as follows. Find $\bunk\in\bfespace$ such that \revv{$\unk^\beta=\uboundary_h^\beta$ on $\inflowboundary^\beta$, $\bunk=\boldsymbol{u}_{0h}$ at $t=0$}, and
\begin{equation}\label{eq.stabilized-matrix-problem}
\oM(\bunk^\np)\derd_t\U^\np+\oK_{\ij}(\bunk^\np)\U^\np = \G
\end{equation}
for $n=1,...,n^{ts}$, where 
\begin{equation}
\oM_{\ij}(\bunk^\np)\doteq \left[1-\max\left(\vdetector[i],\vdetector[j]\right)\right]\M_\ij
+ \delta_{\ij}\rev{\sum_{k\in\nodes}\max(\detector[i],\detector[k])\M_{ik}},
\end{equation}
\revv{$\M^L_i = \sum_j \M_\ij$}, and $\oK_{\ij}(\bunk^\np) \doteq \K_{\ij} + \B_{\ij}$, 
where \revv{$\B_{\ij}$ is the stabilization matrix that takes the form
\begin{equation}
\B_{\ij}(\boldsymbol{w}_h)\doteq \left\{\begin{array}{ll}
\artdif_\ij(w_h)\ \graphl(i,j), & \text{for } m=1, \\
\sum_{\element[e]\in\mesh}\artdif^e_{\ij}(\boldsymbol{w}_h)\ \graphl(i,j)  I_{m\times m} , & \text{for } m>1,
\end{array}\right..
\end{equation}
}

Let us show that adapted non-conforming meshes do not jeopardize any of the stability properties defined in Sect.\ \ref{sec.stability} and proved for conforming meshes in \cite{badia_differentiable_2017,Badia2019}.

\begin{corollary}[\ac{dmp}]\label{thm.genDMP}
	The solution of the discrete problem \eqref{eq.stabilized-matrix-problem} with $m=1$ and using the shock detector \eqref{eq.detector} satisfies the local \ac{dmp} in Def. \ref{def.local-dmp} if $\force = 0$ and, for every control point $i\in\nodes$ such that $u_i$ is a local discrete extremum, it holds:
	\begin{equation}\label{dmp-cond}
	\oK_\ij(\unk) \leq 0,  \,\forall \, j \in \neighborhood[i] \backslash\{i\},\qquad \sum_{j\in\neighborhood[i]} \oK_\ij(\unk)  = 0.
	\end{equation}
	Moreover, the resulting scheme is linearity-preserving as defined in Def. \ref{def.lp}, i.e., $\B_\ij(\unk)=0$ for $\unk\in\mathcal{P}_1(\domain_i)$.
\end{corollary}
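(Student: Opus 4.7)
The plan is to follow the strategy already established for conforming meshes in \cite{badia_monotonicity-preserving_2017,badia_differentiable_2017,Badia2019}: at any node where $\unk$ attains a local discrete extremum, the row $i$ of the stabilized system \eqref{eq.stabilized-matrix-problem} collapses to a lumped-mass LED form, from which both the local \ac{dmp} of Def.\ \ref{def.local-dmp} and the linearity-preserving property follow. The only novelty is the presence of hanging nodes, but these are absorbed into $\M$ and $\K$ through the conformity contributions $\constmass$ and $\constflux$, so the row $i$ associated with a regular node $i\in\nodes$ already accounts for them and the conforming-mesh arguments apply without modification.

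For the \ac{dmp} part, I would first exploit the detector. By \cite[Lm.~3.1]{badia_monotonicity-preserving_2017}, $\detector[i](\unk)=1$ whenever $u_i$ is a local discrete extremum, hence $\max(\detector[i],\detector[k])=1$ for every $k$. Substituting into the definition of $\oM$, the off-diagonal entries in row $i$ vanish and the diagonal reduces to the row sum $\sum_{k\in\nodes}\M_{ik}>0$, so the mass is effectively lumped at extrema. With $\force=0$, row $i$ of \eqref{eq.stabilized-matrix-problem} reads
\begin{equation}
\left(\sum_{k\in\nodes}\M_{ik}\right)\derd_t u_i^{n+1}+\sum_{j\in\neighborhood[i]}\oK_\ij(\unk^{n+1})\,u_j^{n+1}=0.
\end{equation}
Using the hypothesis $\sum_{j\in\neighborhood[i]}\oK_\ij(\unk^{n+1})=0$, this is rewritten as
\begin{equation}
\left(\sum_{k\in\nodes}\M_{ik}\right)\derd_t u_i^{n+1}=-\sum_{j\in\neighborhood[i]\setminus\{i\}}\oK_\ij(\unk^{n+1})\,(u_j^{n+1}-u_i^{n+1}),
\end{equation}
which is exactly the LED template \eqref{eq.general-led-problem} with $\A=\oK$. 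Since $\oK_\ij\le 0$ for $j\ne i$ by hypothesis and $u_j^{n+1}-u_i^{n+1}$ has the appropriate sign at minima (resp.\ maxima), the scheme is \ac{led} in the sense of Def.\ \ref{def.led}. Expanding $\derd_t$ in backward-Euler form then writes $u_i^{n+1}$ as a convex combination of $u_i^n$ and the neighboring values $\{u_j^{n+1}\}_{j\in\neighborhood[i]\setminus\{i\}}$, which yields the local \ac{dmp} by the standard argument recalled in Sect.\ \ref{sec.stability}.

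For linearity preservation, assume $\unk\in\mathcal{P}_1(\domain_i)$. Then the linear approximations of the gradient underlying \eqref{eq.jump} are exact and the jump $\jump{\gradient\unk}_\ij$ vanishes for every $j\in\neighborhood[i]$, while the mean \eqref{eq.mean} generically does not, so the numerator in \eqref{eq.detector} vanishes and $\detector[i](\unk)=0$; the same applies at each neighbor $j$. Substituting into \eqref{eq.artificial-diffusion-scalar} gives $\artdif_\ij(\unk)=0$, and hence $\B_\ij(\unk)=\artdif_\ij(\unk)\graphl(i,j)=0$, as claimed. The main obstacle is not technical but conceptual: one must check that the hanging-node constraints have been folded into $\M$ and $\K$ so that the arguments above, written only for regular nodes $i\in\nodes$, close exactly as in the conforming case. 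Since the scalar detector $\detector[i]$ and the assembled diffusion $\artdif_\ij$ in \eqref{eq.artificial-diffusion-scalar} are defined only for $i,j\in\nodes$, and Def.\ \ref{def.local-dmp} is stated on the same subgraph, this verification reduces to inspecting the reduced neighborhood $\neighborhood[i]$, after which the conforming-mesh proofs carry over verbatim.
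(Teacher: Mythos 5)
Your proposal is correct and follows essentially the same route as the paper: the paper's proof consists precisely of the observation you make at the start and end of your argument, namely that the scalar stabilization is defined on the \emph{assembled} system (so the hanging-node contributions $\constmass$, $\constflux$ are already folded into $\M$ and $\K$), after which the conforming-mesh proof of \cite[Thm.~5.2]{badia_differentiable_2017} applies verbatim. You additionally reconstruct that cited argument (mass lumping at extrema via $\detector[i]=1$, the LED rewriting under \eqref{dmp-cond}, and the vanishing of $\jump{\gradient\unk}_{\ij}$ for linear $\unk$), which the paper simply delegates to the reference.
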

\begin{proof}
	The stabilization scheme for scalar problems is defined on the assembled system. Hence, the modifications introduced in the assembly procedure do not affect the reasoning in the proof of \cite[Thm. 5.2]{badia_differentiable_2017}.
\end{proof}

\begin{lemma}[Local bounds preservation]\label{thm-discreteLED}
	Consider $\bunk\in\bfespace$ with component $\beta$ in the set of tracked variables $\detectorComps$. The stabilized problem \eqref{eq.stabilized-matrix-problem}, \revv{with $\G=0$,} is local bounds preserving as defined in Def. \ref{def.system-led} at any region where $\unk^\beta$ has extreme values.
\end{lemma}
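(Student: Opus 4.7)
The plan is to verify the three bullet points of Def.~\ref{def.system-led} at a node $i\in\nodes$ where $\unk^\beta$ attains a local discrete extremum for some $\beta\in\detectorComps$. The argument should mirror the conforming-mesh proof in \cite{Badia2019c}, with the hanging-node contributions in \eqref{eq.artificial-diffusion} being the only new ingredient.

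\emph{Step 1: saturation of the shock detector.} I would invoke \cite[Lm.~3.1]{badia_monotonicity-preserving_2017} to conclude that $\detector[i](\unk^\beta)=1$ at a local discrete extremum, and hence by definition \eqref{eq.system-detector}, $\vdetector[i](\bunk)=1$. This is the key observation that activates the full Rusanov diffusion at $i$ and simultaneously lumps the mass row at $i$.

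\emph{Step 2: diagonal, positive mass matrix at extrema.} From $\vdetector[i]=1$, the factor $1-\max(\vdetector[i],\vdetector[j])$ vanishes, so for $j\neq i$ we have $\oM_{\ij}=0$. For the diagonal entry, using $\detector[i]=\vdetector[i]=1$ in the second term of the definition of $\oM$ gives $\oM_{ii}=\sum_k \M_{ik}=\M^L_i$, which is strictly positive because the $Q_1$ basis functions are nonnegative and form a partition of unity on the macroelement $\domain_i$. This delivers the first requirement of Def.~\ref{def.system-led}.

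\emph{Step 3: zero row sums of $\oK$.} I would split $\oK_{\ij}=\K_{\ij}+\B_{\ij}$. For $\B$, the graph-Laplacian structure $\graphl(i,j)=2\delta_{\ij}-1$ together with the definition $\artdif^e_{ii}=\sum_{j\neq i}\artdif^e_{\ij}$ immediately yields $\sum_j \B_{\ij}=\boldsymbol{0}$, exactly as in the conforming case (the extra constraint-weighted terms in \eqref{eq.artificial-diffusion} cancel in the same way, since they are symmetric in $j$ on each element). For $\K$, the group-FEM ansatz $\conv_h=\sum_j \shapef[j]\conv(\contunk_j)$ combined with the partition of unity $\sum_j \shapef[j]\equiv 1$ on $\domain_i$ and the outflow boundary term together give $\sum_j \K_{\ij}=\boldsymbol{0}$ at interior and outflow nodes (at inflow nodes the detector is set to zero and the value is prescribed, so the LED statement is vacuous there).

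\emph{Step 4: non-positive eigenvalues of $\oK_{\ij}$ for $j\neq i$.} This is the heart of the matter and the step I expect to be the main obstacle. For $j\neq i$, $\B_{\ij}=-\bigl(\sum_{\element[e]\in\mesh}\artdif^e_{\ij}\bigr) I_{m\times m}$. Writing $\K_{\ij}=\sum_{\element[e]} \K^e_{\ij}$ as a sum of elemental contributions and using that $\conv'(\contunk_{\ij})\cdot\boldsymbol{c}$ is $\RR$-diagonalizable for any $\boldsymbol{c}\in\RR^d$ (hyperbolicity), each elemental block $\K^e_{\ij}$ has only real eigenvalues bounded in modulus by $\lambdamax[ij]^e=\rho\bigl(\conv'(\contunk_{\ij})\cdot(\gradient\shapef[j],\shapef[i])_{\element[e]}\bigr)$. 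Because $\vdetector[i]=1$, definition \eqref{eq.artificial-diffusion} gives $\artdif^e_{\ij}\geq\lambdamax[ij]^e$ on every element $\element[e]$ containing $i$ and $j$; the extra constraint-weighted max terms are nonnegative and handle the hanging-node contributions $k\in\constraint(i)\cup\constraint(j)$ that appear in the assembled matrix $\K_{\ij}$ because of the conformity constraints. Summing over elements therefore yields an elementwise bound $\bigl(\sum_e \artdif^e_{\ij}\bigr) I - \K_{\ij}\succeq 0$ in the sense of having only nonnegative real eigenvalues, which is equivalent to $\oK_{\ij}=\K_{\ij}+\B_{\ij}$ having only non-positive eigenvalues. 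Combining Steps~2--4 verifies Def.~\ref{def.system-led}.

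The delicate point is really the bookkeeping in Step~4: one must track how the assembled $\K_{\ij}$ picks up contributions from hanging neighbours through the constraint matrix $C$, and check that the four groups of terms in \eqref{eq.artificial-diffusion} dominate each of these contributions separately when $\vdetector[i]=1$. Apart from that, the argument is a direct transcription of \cite[proof of the analogous lemma]{Badia2019c}.
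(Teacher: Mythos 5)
Your proposal is correct and follows essentially the same route as the paper: detector saturation at the extremum, mass lumping, the graph--Laplacian/row-sum structure, and the domination of the assembled convection block by the constraint-weighted Rusanov diffusion of \eqref{eq.artificial-diffusion}. The one step your ``summing over elements therefore yields'' glosses over is the step the paper actually carries out: since every elemental contribution shares the same Roe state, the assembled block is $\conv^\prime(\contunk_{\ij})\cdot\cc_{\ij}$ with $\cc_{\ij}=\sum_e\big(\cc_{\ij}^e+\sum_k C_{kj}\cc_{ik}^e+\sum_k C_{ki}\cc_{kj}^e+\sum_k C_{ki}C_{kj}\cc_{kk}^e\big)$, and the explicit formula $\rho\left(\conv^\prime(\contunk_{\ij})\cdot\cc\right)=|\vel_{\ij}\cdot\cc|+\roesound\|\cc\|$ from \eqref{eq.eigenvalues} is subadditive in $\cc$, so $\sum_e\rho(\K^e_{\ij})\geq\rho(\K_{\ij})$; it is this subadditivity (not a separate matrix-wise domination of each contribution) that lets the sum of elemental scalar bounds control the eigenvalues of the assembled matrix.
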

\begin{proof}
	If component $\beta\in \detectorComps$ of $\bunk$ has an extremum at $\x_i$, we know from \cite[Lm. 3.1]{badia_monotonicity-preserving_2017} that $\detector[i](\unk^\beta)=1$. Moreover, it can be checked from \eqref{eq.system-detector} that $\vdetector[i](\bunk)=1$. In this case, $\oM_{\ij}(\bunk) = \delta_{\ij}\sum_j\M_\ij$. Hence, $\oM_{\ij}(\bunk) = 0$ for $j\neq i$ and $\oM_{ii}(\bunk) = m_i$. Therefore, we can rewrite the system as follows
	\begin{align}
	m_i\partial_t \contunk_i + \sum_{j\in\neighborhood[i]\backslash\{i\}} \oK_{\ij}(\contunk_{\ij})(\contunk_j - \contunk_i) &= \\
	m_i\partial_t \contunk_i + \sum_{j\in\neighborhood[i]\backslash\{i\}}\sum_{\element[e]\in\mesh} \Big[(\conv^\prime(\contunk_{\ij}))\cdot \Big ( &
	(\gradient\shapef[j],\shapef[i])_{\element[e]} \\ 
&+\sum_{k\in\constraint(j)} C_{kj} (\gradient\shapef[k],\shapef[i])_{\element[e]} \\
&+\sum_{k\in\constraint(i)} C_{ki} (\gradient\shapef[j],\shapef[k])_{\element[e]} \\
&+\sum_{k\in\constraint(i)\cap\constraint(j)} C_{ki} C_{kj} (\gradient\shapef[k],\shapef[k])_{\element[e]} \Big) 
	-\artdif^e_{\ij} I_{m\times m} \Big](\contunk_j - \contunk_i) = \0. 
	\end{align}
	We need to prove that the eigenvalues of $\oK_{\ij}(\contunk_{\ij})$ are non-positive. To this end, let us show that the following inequality holds
	\begin{align}
	\sum_{\element[e]\in\mesh} & \Big ( \rho\left(\conv^\prime(\contunk_{\ij})\cdot(\gradient\shapef[j],\shapef[i])_{\element[e]}\right) \\
	&+\sum_{k\in\constraint(j)} C_{kj} \ \rho\left(\conv^\prime(\contunk_{\ij})\cdot(\gradient\shapef[k],\shapef[i])_{\element[e]}\right) \\
	&+\sum_{k\in\constraint(i)} C_{ki} \ \rho\left(\conv^\prime(\contunk_{\ij})\cdot(\gradient\shapef[j],\shapef[k])_{\element[e]}\right) \\
	&+\sum_{k\in\constraint(i)\cap\constraint(j)} C_{ki} C_{kj} \ \rho\left(\conv^\prime(\contunk_{\ij})\cdot(\gradient\shapef[k],\shapef[k])_{\element[e]}\right) 
	\Big )
	\geq\rho(\conv^\prime(\contunk_{\ij})\cdot(\gradient\shapef[j],\shapef[i])).
	\end{align}
	One can observe from \eqref{eq.eigenvalues}  that $\rho\left(\conv^\prime(\contunk_{\ij})\cdot(\gradient\shapef[j],\shapef[i])_{\element[e]}\right) = |\vel_{\ij}\cdot\cc_{\ij}^e| + \roesound\|\cc_{\ij}^e\|$, where $\cc_{\ij}^e=(\gradient\shapef[j],\shapef[i])_{\element[e]}$. We have that
	$$
	\cc_{\ij}=(\gradient\shapef[j],\shapef[i])=
	\sum_{\element[e]\in\mesh} \left(\cc_{\ij}^e +
	\sum_{k\in\constraint(j)}C_{kj}\cc_{ik}^e + 
	\sum_{k\in\constraint(i)}C_{ki}\cc_{kj}^e + 
	\sum_{k\in\constraint(i)\cap\constraint(j)}C_{ki}C_{kj}\cc_{kk}^e \right).
	$$ 
	 Thus,
	\begin{align}
	\sum_{\element[e]\in\mesh}	\Big( \left|\vel_{\ij}\cdot\cc_{\ij}^e\right| +
&	\sum_{k\in\constraint(j)}C_{kj} \left|\vel_{\ij}\cdot\cc_{ik}^e \right|\\ 
&+	\sum_{k\in\constraint(i)}C_{ki} \left|\vel_{\ij}\cdot\cc_{kj}^e \right|\\  
&+	\sum_{k\in\constraint(i)\cap\constraint(j)}C_{ki}C_{kj} \left|\vel_{\ij}\cdot\cc_{kk}^e \right| \Big)
	\geq \left|\vel_{\ij}\cdot\cc_{\ij}\right|,
	\end{align}
	and \rev{
	\begin{align}
	\sum_{\element[e]\in\mesh} \Big( \roesound\|\cc_{\ij}^e\| +
	  \sum_{k\in\constraint(j)}C_{kj} \roesound\|\cc_{ik}^e\| &+
	 \sum_{k\in\constraint(i)}C_{ki} \roesound\|\cc_{kj}^e\| \\  
	&+ \sum_{k\in\constraint(i)\cap\constraint(j)}C_{ki}C_{kj} \roesound \|\cc_{kk}^e\| \Big)
	\geq \roesound\|\cc_{\ij}\|.
	\end{align}}
	Therefore, $\sum_e\rho(\K_{\ij}^e(\contunk_{\ij}))\geq\rho(\K_{\ij}(\contunk_{\ij}))$. 
	Moreover, by definition (see \eqref{eq.artificial-diffusion}),
	\begin{align}
	\artdif_{\ij}^e\geq \rho\left(\conv^\prime(\contunk_{\ij})\cdot(\gradient\shapef[j],\shapef[i])_{\element[e]}\right) &+\sum_{k\in\constraint(j)} C_{kj} \ \rho\left(\conv^\prime(\contunk_{\ij})\cdot(\gradient\shapef[k],\shapef[i])_{\element[e]}\right) \\
	 &+\sum_{k\in\constraint(i)} C_{ki} \ \rho\left(\conv^\prime(\contunk_{\ij})\cdot(\gradient\shapef[j],\shapef[k])_{\element[e]}\right) \\
	 &+\sum_{k\in\constraint(i)\cap\constraint(j)} 
	 C_{ki} C_{kj} \ \rho\left(\conv^\prime(\contunk_{\ij})\cdot(\gradient\shapef[k],\shapef[k])_{\element[e]}\right) 
	 \quad \text{for} \quad j\neq i.
	\end{align}
	Furthermore, one can infer from \eqref{eq.stab-term} that $\rho(\B_{\ij}^e(\contunk_{\ij}))\geq\rho(\K_{\ij}^e(\contunk_{\ij}))$, \revv{where $\B_\ij^e(\contunk_{\ij})$ and $\K_{\ij}^e(\contunk_{\ij})$ are the elemental stabilization and stiffness matrices, respectively.}  Hence, $\rho(\B_{\ij}(\contunk_{\ij}))\geq\rho(\K_{\ij}(\contunk_{\ij}))$. Finally, 
	since $\oK_{\ij}= \K_{\ij} + \B_{\ij}$ and $\B_{\ij} = \sum_e\B^e_{\ij} = -\sum_e\artdif_{\ij}^e I_{m\times m}$ for all $j\neq i$. Then, the maximum eigenvalue of $\oK_{\ij}(\contunk_{\ij})$ is non-positive, which completes the proof.
\end{proof}

Notice that it is essential to apply properly the constraints at the flux \ac{fe} approximation, i.e., $\conv^\prime(\contunk_k) = \sum_{\rev{i\in\constraining(k)}} C_{ki} \conv^\prime(\contunk_i)$. Otherwise, it is not possible to formally prove local bound preservation. However, experimental results in the present work show that using $\conv^\prime(\contunk_k)$ does not affect the overall performance of the scheme.

\subsection{Differentiable stabilization}
In the case of steady, or implicit time integration, differentiability plays a role in the convergence behavior of the nonlinear solver. This is especially important if one wants to use Newton's method. The authors show in \cite{badia_differentiable_2017,badia_monotonicity-preserving_2017,Badia2019c} that nonlinear convergence can be improved after few modifications to make the scheme twice-differentiable. In this section, we introduce a set of regularizations applied to all non-differentiable functions present in the stabilized scheme introduced above. In order to regularize these functions, we follow the same strategy as in \cite{badia_differentiable_2017,badia_monotonicity-preserving_2017,Badia2019c}. Absolute values are replaced by
\begin{equation}
\absn{x} =  \sqrt{x^2 + \varepsilon_h},\qquad
\absd{x} =  \frac{x^2}{\sqrt{x^2 + \varepsilon_h}},
\end{equation}
\revv{where $\varepsilon_h$ is a small positive value.} Note that $\absd{x}\leq |x|\leq\absn{x}$. Next, we also use the smooth maximum function
\begin{equation}\label{eq:smax}
\smax(x,y) \doteq \frac{ \absn[\sigma_h]{x -y}}{2} +\frac{x +y}{2} \geq \max(x,y),
\end{equation}
\revv{where $\sigma_h$ is a small positive value.}
In addition, we need a smooth function to limit the value of any given quantity to one. To this end, we use
\begin{equation}\label{eq.smthlimit}
\smthlimit{x} \doteq \left\{\begin{array}{ll}
2x^4-5x^3+3x^2+x, & x<1,\\
1, & x\geq 1.
\end{array}\right.
\end{equation}

The set of twice-differentiable functions defined above allows us to redefine the stabilization term introduced in Sect. \ref{sec.stabilization}. In particular, we define 
\begin{equation}\label{eq.smth-stab-term}
\tilde{B}_h(\boldsymbol{w}_h;\bunk,\btest)\doteq 
\left\{\begin{array}{ll}
\sum_{i\in\nodes}\sum_{j\in\neighborhood[i]} \smthartdif_\ij(w_h) v_i u_j \graphl(i,j), & \text{for } m=1, \\
\sum_{\element[e]\in\mesh}\sum_{i,j \in \nodes(\element[e])} \smthartdif^e_{\ij}(\boldsymbol{w}_h)\graphl(i,j) \boldsymbol{v}_i \cdot  I_{m\times m} \boldsymbol{u}_j, & \text{for } m>1,
\end{array}\right.
\end{equation}
where
\begin{align}\label{eq.smth-artificial-diffusion-scalar}
\revv{\smthartdif_\ij(w_h)}&\revv{\doteq\smax\Big(\smax\big(\smthdetector[i](w_h)\K_\ij,\smthdetector[j](w_h)\K_\ji\big),0\Big) \quad \text{for}\quad j\in\neighborhood[i]\backslash\{i\}}, \\
\smthartdif_{ii}(w_h)&\doteq\displaystyle\sum_{j\in\neighborhood[i]\backslash\{i\}} \smthartdif_\ij(w_h),
\end{align}
and
\begin{equation}\label{eq.smth-artificial-diffusion}
\begin{aligned}
\smthartdif^e_{\ij}(\boldsymbol{w}_h) \doteq & \smax\left(\vsmthdetector[i](\boldsymbol{w}_h)\lambdamax[ij],\vsmthdetector[j](\boldsymbol{w}_h)\lambdamax[ji]\right) \\ 
& + \sum_{k\in\constraint(i)} C_{ki} \smax\left(\vsmthdetector[k](\boldsymbol{w}_h)\lambdamax[kj],\vsmthdetector[j](\boldsymbol{w}_h)\lambdamax[jk]\right) \\ 
& + \sum_{k\in\constraint(j)} C_{kj} \smax\left(\vsmthdetector[i](\boldsymbol{w}_h)\lambdamax[ik],\vsmthdetector[k](\boldsymbol{w}_h)\lambdamax[ki]\right) \\
& + \sum_{k\in\constraint(i)\cap\constraint(j)} C_{ki} C_{kj} \vsmthdetector[k](\boldsymbol{w}_h)\lambdamax[kk], \quad  
\text{for } j\in\neighborhood[i]\backslash\{i\}, \\
\smthartdif^e_{ii}(\boldsymbol{w}_h)&\doteq\displaystyle \sum_{j\in\neighborhood[i]\backslash\{i\}} \smthartdif^e_{\ij}(\boldsymbol{w}_h).\quad
\end{aligned}
\end{equation}
Let us note that $\lambdamax[\ij]$ needs to be regularized as $\lambdamax[\ij]=\rev{\absn{\vel_{\ij} \cdot\cc_{\ij}^e}} + c\|\cc_{\ij}^e\|$. 
The shock detector is also regularized as follows:
\begin{equation}\label{eq.smth-system-detector}
\vsmthdetector[i](\bunk)\doteq \textstyle\smax \{\smthdetector[i](\unk^\beta)\}_{\beta\in \detectorComps}.
\end{equation}
\revv{Notice that the regularized maximum has only been defined for two arguments. However, for $|\detectorComps|>2$, one can chain several times the regularized functions, i.e., $\smax(\smax(...))$.} 
In the case of the component shock detector we recall the definition in \cite[Eq. 18]{badia_monotonicity-preserving_2017}
\begin{equation}\label{eq.smth-detector}
\smthdetector[i](\unk^\beta)\doteq
\left[\smthlimit{\dfrac{\absn{\sum_{j\in\neighborhood[i]} \jump{\gradient\unk^\beta}_{\ij} }+\zeta_h}{\sum_{j\in\neighborhood[i]}2\mean{\absd{\gradient\unk^\beta\cdot\hat{\rr}_{\ij}}}_{\ij}+\zeta_h}}\right]^q,
\end{equation}
where $\zeta_h$ is a small value for preventing division by zero. Finally, the twice-differentiable stabilized scheme reads: find $\bunk\in\bfespace$ such that \revv{$\unk^\beta=\uboundary_h^\beta$ on $\inflowboundary^\beta$}, $\bunk=\boldsymbol{u}_{0h}$ at $t=0$, and
\begin{equation}\label{eq.smth-stabilized-matrix-problem}
\sM(\bunk^\np)\derd_t\U^\np+\sK_{\ij}(\bunk^\np)\U^\np = \G \quad\text{for}\ n=1,...,n^{ts},
\end{equation}
where 
\begin{align}
\sM_{\ij}(\bunk^\np) & \doteq \left[1-\smax\left(\vsmthdetector[i],\vsmthdetector[j]\right)\right]\M_\ij + \rev{\delta_{\ij}\sum_{k\in\nodes}\smax(\detector[i],\detector[k])\M_{ik}},\\
\sK_{\ij}(\bunk^\np) & \doteq \K_{\ij}(\bunk^\np) + \tilde{\B}_{\ij}(\bunk^\np),
\end{align}
and \revv{$\tilde\B_{\ij}$ is the regularized stabilization matrix that takes the form
\begin{equation}
	\tilde\B_{\ij}(\boldsymbol{w}_h)\doteq \left\{\begin{array}{ll}
	\smthartdif_\ij(w_h)\ \graphl(i,j), & \text{for } m=1, \\
	\sum_{\element[e]\in\mesh}\smthartdif^e_{\ij}(\boldsymbol{w}_h)\ \graphl(i,j)  I_{m\times m} , & \text{for } m>1.
	\end{array}\right.
\end{equation}
}

\begin{corollary}\label{cor.lbp}
	The scheme in \eqref{eq.discrete-matrix-problem} \revv{with $\G=0$ and} the differentiable stabilization in \eqref{eq.smth-stab-term} is local bounds preserving, as defined in Def. \ref{def.system-led}, at any region where $\unk^\beta$ has extreme values for every $\beta$ in $\detectorComps$. 
\end{corollary}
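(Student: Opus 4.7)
The plan is to mirror the proof of Lemma \ref{thm-discreteLED}, substituting each non-differentiable operator by its regularization and checking that the inequalities which drive the argument still point in the required direction. The key tools are the built-in bounds $\absd{x}\leq |x|\leq \absn{x}$, $\smax(x,y)\geq \max(x,y)$, and the cap $\smthlimit{x}\leq 1$ with $\smthlimit{1}=1$.

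First, I would establish the smooth analog of \cite[Lm.~3.1]{badia_monotonicity-preserving_2017}. At a node $\x_i$ where $\unk^\beta$ has a discrete extremum, every jump $\jump{\gradient\unk^\beta}_{\ij}$ has the same sign. Using $\absn{\cdot}\geq|\cdot|$ in the numerator of \eqref{eq.smth-detector} and $\absd{\cdot}\leq|\cdot|$ in the denominator, the argument of $\smthlimit{\cdot}$ is $\geq 1$, so $\smthdetector[i](\unk^\beta)=1$. Since $\smax$ dominates $\max$ and the $\smthlimit{\cdot}$ cap forces $\smthdetector[i]\leq 1$ componentwise, I then conclude that $\vsmthdetector[i](\bunk)\geq 1$ whenever some $\beta\in\detectorComps$ attains an extremum at $\x_i$, and consequently $\smax(\vsmthdetector[i],\vsmthdetector[j])\geq 1$ for every neighbor $j$. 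Plugging this into the definition of $\sM_{\ij}$ makes the first summand non-positive (in fact, the natural interpretation is that it is zero, because the coefficient in front of $\M_\ij$ should be clipped below at zero along the same lines as the cap in $Z$). The remaining diagonal contribution $\delta_{\ij}\sum_k\smax(\detector[i],\detector[k])\M_{ik}$ is strictly positive, so $\sM$ becomes a diagonal matrix with positive entries, exactly as required by Def.~\ref{def.system-led}.

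Second, with the mass matrix reduced to its lumped form, the system at $\x_i$ can be rewritten in the same telescopic way as in the proof of Lemma \ref{thm-discreteLED}, and it suffices to prove that $\sK_{\ij}(\contunk_{\ij})$ has non-positive eigenvalues for $j\neq i$ and that $\sum_j\sK_{\ij}=\0$. The row-sum property follows directly from the definition of $\smthartdif^e_{ii}$ in \eqref{eq.smth-artificial-diffusion} and the fact that the discrete convection operator has vanishing row sums. For the eigenvalue bound I would reuse the algebraic inequalities from Lemma \ref{thm-discreteLED} applied to each element contribution: the regularized spectral radius $\lambdamax[\ij]=\absn{\vel_{\ij}\cdot\cc_{\ij}^e}+\roesound\|\cc_{\ij}^e\|$ still majorizes $|\vel_{\ij}\cdot\cc_{\ij}^e|+\roesound\|\cc_{\ij}^e\|$, so each $\smax$ in \eqref{eq.smth-artificial-diffusion} exceeds the corresponding un-regularized maximum and hence dominates $\rho(\K^e_{\ij}(\contunk_{\ij}))$. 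Summing over elements and using $\smax\geq\max$ a second time gives $\rho(\tilde{\B}_{\ij})\geq\rho(\K_{\ij})$, which is exactly what is needed to conclude that the maximum eigenvalue of $\sK_{\ij}=\K_{\ij}+\tilde{\B}_{\ij}$ is non-positive.

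The main obstacle is the first step: certifying that the mass matrix really reduces to a diagonal positive matrix despite the fact that $\smax$ overshoots $\max$ by $O(\sigma_h^{1/2})$ and that $\smthdetector[i]$ is only exactly $1$ thanks to the cap $\smthlimit{\cdot}$. One must argue that this overshoot is harmless—either because the coefficient $1-\smax(\vsmthdetector[i],\vsmthdetector[j])$ multiplying the consistent mass is taken with a non-negativity clip consistent with the construction in \cite{badia_monotonicity-preserving_2017,Badia2019c}, or by interpreting the statement in the sense that the regularization parameters $\varepsilon_h$, $\sigma_h$, $\zeta_h$ are chosen small enough that the strict local bounds preservation of Def.~\ref{def.system-led} still holds at extrema. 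Once this is settled, the remainder of the argument is a routine transcription of the proof of Lemma \ref{thm-discreteLED} with the regularized objects.
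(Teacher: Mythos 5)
Your proposal follows essentially the same route as the paper's proof: at an extremum the bounds $\absd{x}\leq|x|\leq\absn{x}$ push the quotient in \eqref{eq.smth-detector} above one so that $Z$ saturates $\smthdetector[i]$ at $1$, then $\smax\geq\max$ gives $\smthartdif^e_{\ij}\geq\artdif^e_{\ij}$ and hence $\rho(\tilde{\B}^e_{\ij}(\bunk))\geq\rho(\B^e_{\ij}(\bunk))$, and the rest is inherited from Lemma \ref{thm-discreteLED}. The mass-matrix issue you flag is real---$1-\smax(\vsmthdetector[i],\vsmthdetector[j])$ is strictly negative by $O(\sqrt{\sigma_h})$ even when both detectors equal one, so $\sM$ is only diagonal up to a small regularization perturbation---and the paper's own one-line proof silently skips this point, so your treatment is, if anything, more careful than the published one.
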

\begin{proof}	
	For an extreme value of $\unk^\beta$, since $\absd{x}\leq |x|\leq\absn{x}$ the quotient of \eqref{eq.smth-detector} is larger than one. Hence, by definition of $Z(x)$,  $\smthdetector[i]$ is equal to 1. At this point, it is easy to check that $\smthartdif_{\ij}^e\geq\artdif_{\ij}^e$ in virtue of the definition of $\smax$. Therefore, $\rho(\tilde{\B}^e_{\ij}(\bunk)) \geq \rho(\B^e_{\ij}(\bunk))$, completing the proof.
\end{proof}

Moreover, it is important to mention that the differentiable shock detector is weakly linearly-preserving as $\zeta_h$ tends to zero. This result follows directly from \cite{badia_monotonicity-preserving_2017}. In order to obtain a differentiable operator, we have added a set of regularizations that rely on different parameters, e.g., $\sigma_h,\ \varepsilon_h,\ \zeta_h$. Giving a proper scaling of these parameters is essential to recover theoretic convergence rates. In particular, we use the following relations
\begin{equation}\label{eq.scaling}
\sigma_h=\sigma|\lambdamax|^2  L^{2(d-3)} h^{4},\quad \varepsilon_h=\varepsilon L^{-4} h^2,\quad \zeta_h=L^{-1}\zeta,
\end{equation}
where \revv{$\sigma$, $\varepsilon$, and $\zeta$ are small positive parameters,} $d$ is the spatial dimension of the problem, $L$ is a characteristic length. \revv{The value $|\lambdamax|$ being used is the maximum on the whole domain of the Euclidean norm. In the case of a scalar problem, it is simply the maximum convection velocity, i.e. $\max_{\x\in\domain} |\vel(\x)|$.}

\section{Adaptive mesh refinement}\label{sec.adaptivity}

The motivation of an adaptive \ac{fe} method is to solve \eqref{eq.stabilized-matrix-problem} up to a certain tolerance (or resolution) using the \emph{minimum} number of \acp{dof}. To this end, the solution error ($\berror = \contunk-\bunk$) is estimated at each element. With this information at hand, it is possible to iteratively adapt the resolution of the mesh at certain regions. This process can be divided into two parts: estimating the error at every cell, and deciding which and how many cells need to be refined or coarsened. 
This procedure is performed iteratively until a desired tolerance is achieved or, alternatively, a number of elements is reached. 
In the present work, we start with a rather coarse mesh and perform the following steps till reaching a stopping criterion:
\begin{enumerate}
	\item Compute solution $\bunk$;
	\item Estimate the error $\berror$;
	\item Select all cells that need to be refinement or coarsened;
	\item Update the mesh, and project the solution to the new mesh.
\end{enumerate}
In some cases, the refinement might be driven by features of the solution instead of a classical error estimator. For instance, one may decide to refine the regions around discontinuities. In this scenario, one could use \revv{an} expression that does not estimate the error, but it allows to concentrate the elements around discontinuities.

\subsection{Error estimators}
One of the keys of \ac{amr} is the ability to provide a good estimation of the error. Several error estimators have been proposed to date \cite{Ainsworth1997,Johnson1995,Zienkiewicz1987,Kelly1983,Eriksson1995}. These can be classified, at least, in two main types.  Some authors \cite{Eriksson1995,Johnson1995,Suli1999,Nazarov2010,Nazarov2011} try to compute an upper bound of the error for every cell. Then, provided a user defined tolerance, one can decide to refine or coarsen each cell. However, an adjoint problem needs to be solved in order to compute this upper bound \cite{Burman2000,Johnson1995}. It is possible to approximate the error bounds without solving an adjoint problem only for simple cases, see \cite{Johnson1995}. Therefore, this kind of error estimators increases the computational cost substantially. Alternatively, one can simply determine the distribution of the error in the mesh and use this information to drive an adaptivity algorithm. In this scenario, some authors \cite{Moller2006,Bittl2013,Kuzmin2018,Ainsworth1997,Zienkiewicz1987,Zienkiewicz1992} drive the adaptivity process with the solution gradient. In this case, explicit expressions of the estimated error are possible, requiring less computational resources than the previous option.

In general, the adaptive procedure can be described as follows. Given a finite element solution $\bunk$, the error $\berror$ is approximated as $\berror\approx\gradient\contunk-\gradient\bunk$.  Then, the reconstruction is used as an approximation of the exact gradient. This strategy is based on \emph{superconvergence} of special recovery techniques (see \cite{Zienkiewicz1992} and refs. in \cite{Moller2006,Ainsworth1997}). Kuzmin and co-workers \cite{Moller2006,Bittl2013} follow \cite{Zienkiewicz1987} to reconstruct an approximation of the exact gradient. Kelly et al. \cite{Kelly1983} proposed a well-known estimator based on gradient recovery:
\begin{equation}
\estimator_K^2\doteq\frac{h_K}{24}\int_{\partial K} \jump{\frac{\partial \bunk}{\partial n}}^2 d\Gamma,
\end{equation}
where $\estimator_K$ is the estimated error at every element, $\element$. \revv{In this case, the jump $\jump{\cdot}$ does not correspond with the linear approximation of the gradient jump $\jump{\gradient(\cdot)}_\ij$ defined in \eqref{eq.jump}. Instead, it takes the classical definition, i.e.,  $\jump{u} = u^+\cdot n^+ + u^-\cdot n^-$.} The main advantages of this estimator are its simplicity and its low computational cost. For these reasons, this estimator is used in the present work.

It is worth mentioning that our problems of interest are characterized by exhibiting discontinuities, where the error concentrates. These regions are susceptible to develop instabilities, and thus, these are the regions in which the shock capturing is activated. Therefore, it is natural to use the shock capturing to drive the adaptivity procedure. We propose an \rev{indicator} based on the graph Laplacian $\ell(i,j)$ present in the stabilization term \eqref{eq.stab-term}. This way, we reuse available information and reduce the computational overhead associated with \rev{the selection of cells to be refined}. The indicator reads: 
\begin{equation}
  \rev{(\lestimator_{\element}^\beta)^2 \doteq h_K^{d-2} \sum_{i\in\nodes(\element)} \sum_{j\in\neighborhood[i]} (u_i^\beta - u_j^\beta)^2,}
\end{equation}
where $\beta\in \detectorComps$ is the index of the specific component analyzed. This \rev{expression} is expected to yield high values around shocks and low values in smooth regions.

\subsection{Refinement strategy}

After the error has been estimated for every element \rev{(or the indicator has been computed at every cell)}, one needs to decide which element needs to be refined and which one coarsened. If an upper bound of the error is computed, then one may use a given tolerance to make this decision. However, in the present case this is not available. A classical alternative is to refine/coarsen a fixed amount of elements at every iteration \cite{bangerth_algorithms_2012,Badia2019d}. In the present study, a 30\% of the elements with higher error estimates \rev{(or higher indicator values)} are refined whereas a 10\% of the elements with lower \rev{values} are coarsened. This percentages are arbitrary and other choices are valid. Notice that using this setting in two dimensions the number of elements is almost doubled at every iteration. We make use of the parallel $n$th element algorithm \cite{Tikhonova2005,Badia2019d} to efficiently determine the \rev{indicator} thresholds for refining or coarsening the elements.

\section{Nonlinear solver}\label{sec.solvers}

In this section, we describe the method used for solving the nonlinear system of equations arising from the scheme introduced above. In particular, we use a hybrid Picard--Newton approach in order to increase the robustness of the nonlinear solver. Moreover, we also make use of a line--search method to improve the nonlinear convergence.

We define the residual of the equation \eqref{eq.smth-stabilized-matrix-problem} at the $k$-th iteration as 
\begin{equation}\label{eq.residual}
\R(\bunk^\nk) \doteq \sM(\bunk^\nk)\derd_t\U^\nk+\sK_{\ij}(\bunk^\nk)\U^\nk - \G.
\end{equation}
Hence, the Jacobian is defined as
\begin{align}\label{eq.jacobian}
\J(\bunk^\nk)&\doteq \frac{\partial\R(\bunk^\nk)}{\partial \U^\nk} \\
&= \Delta t^{-1}_{t+1} \sM(\bunk^\nk)+\sK_{\ij}(\bunk^\nk) + \Delta t^{-1}_{t+1}\frac{\partial\sM(\bunk^\nk)}{\partial \U^\nk}\derd_t\U^\nk+\frac{\partial \sK_{\ij}(\bunk^\nk)}{\partial \U^\nk}\U^\nk.
\end{align}
Therefore, Newton method consists in solving $\J(\bunk^\nk) \Delta\U^{k+1,n+1} = - \R(\bunk^\nk)$. It is well known that Newton method can diverge if the initial guess of the solution $\bunk^{0,n+1}$ is not close enough to the solution. In order to improve robustness, we use a line--search method to update the solution at every time step. The new approximation is computed as $\U^{k+1,n+1} = \U^\nk + \lambda\Delta\U^{k+1,n+1}$, where $\lambda$ is obtained using a standard cubic backtracking algorithm. 

As introduced at the beginning of the section, we also use a hybrid approach combining Newton method with Picard linearization. Picard nonlinear iterator can be obtained removing the last two terms of \eqref{eq.jacobian}, i.e.,
\begin{equation}\label{eq.picard}
\left(\Delta t^{-1}_{t+1}\sM(\bunk^\nk)+\sK_\ij(\bunk^\nk)\right)\Delta\U^{k+1,n+1} = -\R(\bunk^\nk).
\end{equation}
Clearly, it is equivalent to 
\begin{equation}
\left(\Delta t^{-1}_{t+1}\sM(\bunk^\nk)+\sK_\ij(\bunk^\nk)\right)\U^{k+1,n+1} = \Delta t^{-1}_{t+1}\sM(\bunk^\nk)\U^{n} + \G.
\end{equation}
Moreover, we modify the left hand side terms in \eqref{eq.picard}; we use $\vdetector[i]=1$ for computing these terms while we use the value obtained from \eqref{eq.system-detector} for the residual. Using this strategy, the solution remains unaltered but the obtained approximations $\bunk^\nk$ for intermediate values of $k$ are more diffusive. Even though this modification slows the nonlinear convergence, it is essential at the first iterations. Otherwise, the robustness of the method might be jeopardized.

The resulting iterative nonlinear solver consists in the following steps. We iterate Picard method in \eqref{eq.picard}, with the modification described above, until the \rev{Euclidean} norm of the residual \rev{vector} is smaller than a given tolerance. In the present work, we use a tolerance of $10^{-2}$. Afterwards, Newton method with the exact Jacobian in \eqref{eq.jacobian} is used until the desired nonlinear convergence criteria is satisfied. We summarize the nonlinear solver introduced above in Alg.\ \ref{alg.scheme}.

\begin{algorithm}[h]
	\caption{Hybrid Picard--Newton method.}\label{alg.scheme}
	\KwIn{$\U^{0,n+1}$, ${\rm tol_1}$, ${\rm tol_2}$, $\varepsilon$}
	\KwOut{$\U^{k,n+1}$, $k$}
	$k=1$, $\varepsilon^1 = \varepsilon$ \\
	\While{$ \|\R(\U^\nk)\|/ \|\R(\U^{0,n+1})\| \geq \rm tol_1$ }{
		Compute $\vdetector[i](\U^\nk)$ using \eqref{eq.system-detector} \\
		Compute $\Delta \U^{k+1,n+1}$ using \eqref{eq.picard} \\
		Minimize $\|\R(\U^{k+1,n+1})\|$, where $\U^{k+1,n+1} = \lambda\Delta \U^{k+1,n+1} + \U^\nk$, with respect to $\lambda$ \\
		Set $\U^{k+1,n+1}=\lambda\Delta \U^{k+1,n+1} + \U^\nk$ \\
		Update $k= k+1$
	}
	
	\While{$ \|\R(\U^\nk)\|/ \|\R(\U^{0,n+1})\| \geq \rm tol_2$ }{
		Compute $\vdetector[i](\U^\nk)$ using \eqref{eq.system-detector} \\
		Solve $\J(\U^\nk)\Delta \U^{k+1,n+1}=-\R(\U^\nk)$ with $\J$ in \eqref{eq.jacobian} \\
		Minimize $\|\R(\U^{k+1,n+1})\|$, where $\U^{k+1,n+1} = \lambda\Delta \U^\nk + \U^\nk$, with respect to $\lambda$ \\
		Set $\U^{k+1,n+1}=\lambda\Delta \U^\nk + \U^\nk$ \\
		Update $k= k+1$
	}
\end{algorithm}

\section{Numerical results}\label{sec.numerical-exp}
In this section, we perform several numerical experiments to assess the numerical scheme introduced in the previous sections. First, we perform a convergence analysis to assess its implementation. Then, we use steady benchmark tests to analyze the effectiveness of the high--order scheme in the context of \ac{amr}. In particular, we compare the nonlinear scheme in \eqref{eq.smth-stabilized-matrix-problem} with its linear (first order) counterpart, i.e., using $\vsmthdetector[i](\bunk)\equiv 1$.

From previous experience \cite{badia_differentiable_2017,badia_monotonicity-preserving_2017,Bonilla2018a,Bonilla2019a}, we choose the following regularization parameters: $\sigma=10^{-2}$, $\varepsilon=10^{-4}$, and $\zeta = 10^{-10}$.
In addition, the density is discontinuous at all shocks and contacts for all Euler tests below. Therefore, we use $\detectorComps=\{1\}$ in \eqref{eq.system-detector}, i.e., the shock detector is based on the density behavior in all Euler tests below.

\subsection{Convergence}

First, the convergence to a discontinuous solution is analyzed. To this end, we solve two different problems. On the one hand, the following scalar problem is solved
\begin{equation}\label{eq.6.steady_transport}
\begin{array}{rlcl}
\gradient\cdot(\mathbf{v}u) &= 0 &\text{ in }& \domain=[0,1]\times[0,1], \\
u &= u_D &\text{ on } & \inflowboundary,
\end{array}
\end{equation}
where $\mathbf{v}(x,y) \doteq \left(\nicefrac{1}{2},\, \sin\nicefrac{-\pi}{3}\right)$, and inflow boundary conditions $u_D=1$ on $\{x=0\}\cap \{y>0.7\}$ and $y=1$, while $u_D=0$ at the rest of the inflow boundary. This problem has the following analytical solution
\begin{equation}
u(x,y) =  \left\{ \begin{array}{cl}
1 & \text{if}\quad y > 0.7 + 2x\sin\nicefrac{-\pi}{3}, \\
0 & \text{otherwise} .
\end{array}\right. 
\end{equation}

For the Euler equations, the problem is the well known compression corner test \cite{AndersonJr.1990,Kuzmin2012}, also known as oblique shock test \cite{shakib_new_1991,tezduyar_stabilization_2006}. This benchmark consists in a supersonic flow impinging to a wall at an angle. We use a $[0,1]^2$ domain with a $M=2$ flow at $10^{\circ}$ with respect to the wall. This leads to two flow regions separated by an oblique shock at 29.3$^{\circ}$, see Fig.\ \ref{fig.corner}. 
\begin{figure}[h]
	\centering
	\includegraphics[width=0.35\textwidth]{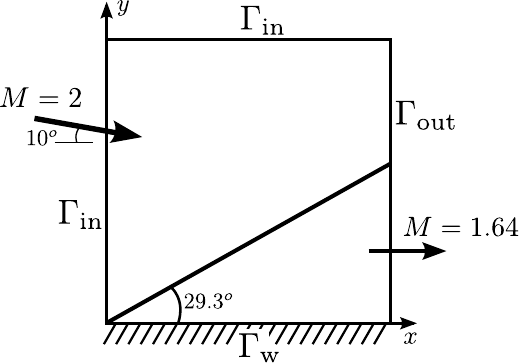}
	\caption{Compression corner scheme.}
	\label{fig.corner}
\end{figure}

\begin{figure}[h]
	\centering
	\begin{subfigure}[t]{0.42\textwidth}
		\includegraphics[width=\textwidth]{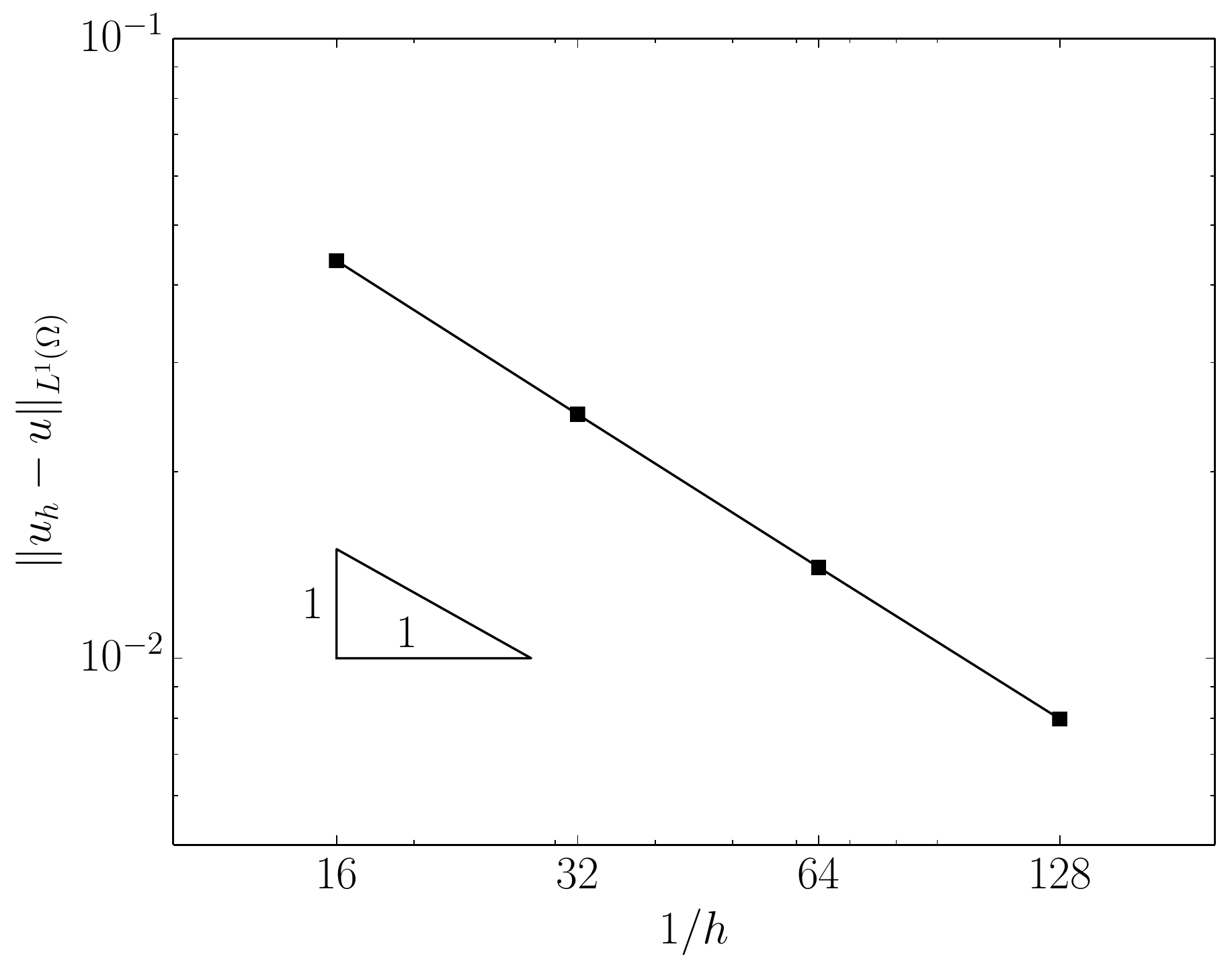}
		\caption{Scalar transport problem.}
	\end{subfigure}
	\begin{subfigure}[t]{0.44\textwidth}
		\includegraphics[width=\textwidth]{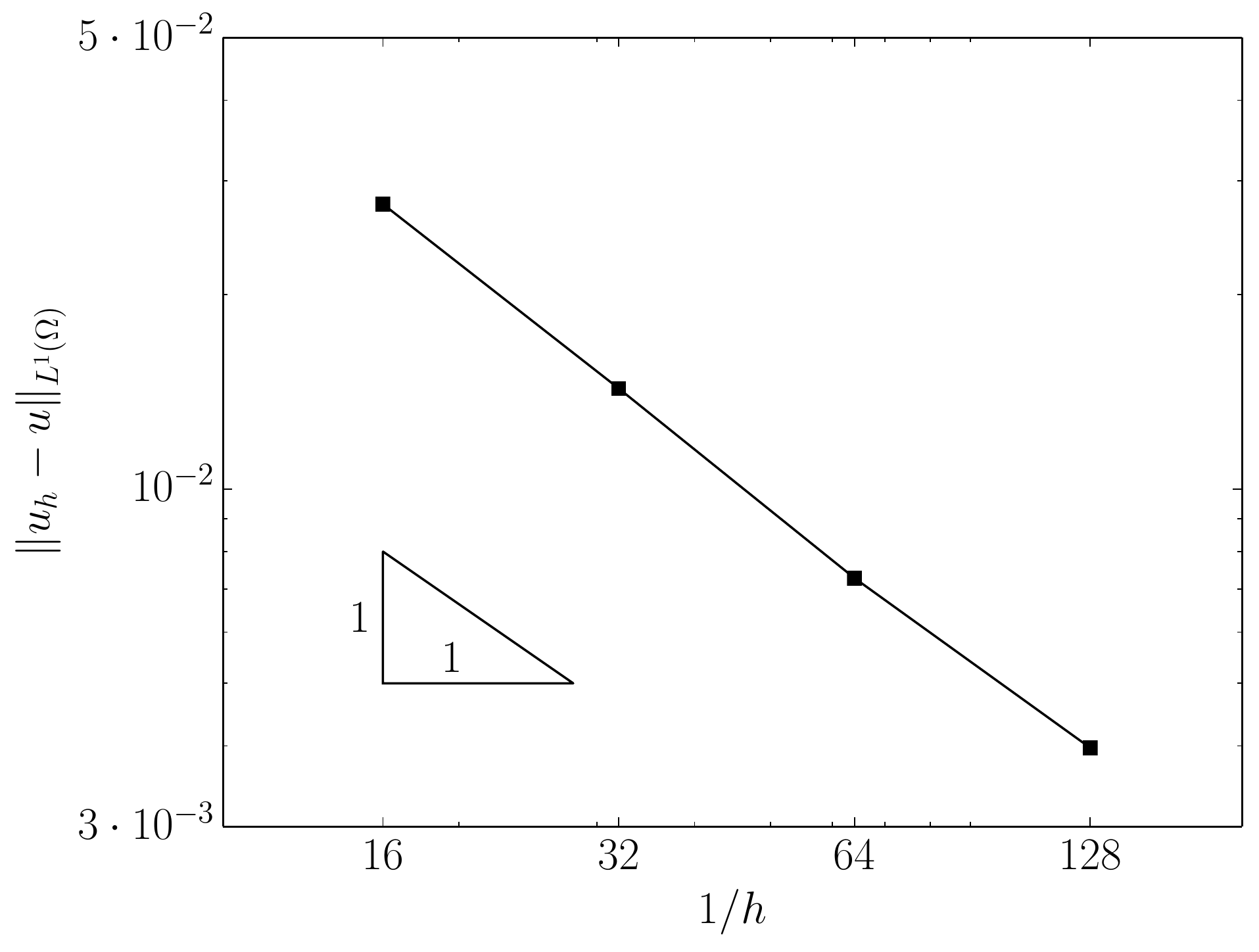}
		\caption{Euler equations.}
	\end{subfigure}
	\caption{Convergence of $\|u-\unk\|_{\lone(\domain)}$ to a solution with a discontinuity.}
	\label{fig.convergence}
\end{figure}

Since the solution is not smooth, we expect linear convergence rates in the $\lone$-norm. Fig.\ \ref{fig.convergence} shows the convergence behavior of both problems with uniform mesh refinements. The experimental convergence rate measured for the scalar transport problem is 0.82, whereas the convergence rate measured is 0.94 for the compression corner test. Therefore, both tests exhibit the expected convergence behavior.

\subsection{Linear discontinuity}

For this test, we use again the problem in \eqref{eq.6.steady_transport}. The purpose of this test is twofold. On the one hand, we analyze the effectiveness of the proposed \rev{indicator}. On the other hand, we compare the effectiveness of the linear and nonlinear stabilization methods. Specifically, this effectiveness is measured as follows. For a given error, we consider a method more effective if it requires less computational time, independently of the number of elements required. In addition, we also solve the problem for successive uniformly refined meshes in order to evaluate the effect of \ac{amr}.

For all comparisons, we start with a coarse mesh of $16\times 16$ elements, and proceed adapting the mesh up to a maximum number of elements. For the nonlinear stabilization, we set a maximum of \revv{$5\cdot10^4$} elements. The maximum number of elements for the low-order method is \revv{$2\cdot10^6$}. The uniform mesh is refined up to a \revv{$1024\times 1024$} mesh. We use a nonlinear tolerance of $\|\Delta \bunk\|/\|\bunk\|<10^{-4}$, and a maximum of 500 iterations. 

Fig.\ \ref{fig.linear-amr} shows the evolution of the \ac{amr} algorithm for \rev{the Kelly error estimator and the proposed refinement strategy based on the graph Laplacian}. The results shown in this picture have been obtained using the linear stabilization, and the left-most column using the nonlinear one. It can be observed that both Kelly ($\estimator_K$) \rev{estimator} and graph Laplacian ($\lestimator_K$) \rev{indicator} refine in the vicinity of the shock. However, the graph Laplacian operator clearly outperforms Kelly estimator.

Figs.\ \ref{fig.linear-q1}--\ref{fig.linear-q10} compare the effectiveness of the low-order and the high-order stabilization schemes. The results are obtained for the stabilization parameter $q=1$, $q=2$, and $q=10$, respectively.

At Fig.\ \ref{fig.linear-q2}, the nonlinear stabilization is able to converge the nonlinear problem efficiently and the overhead of solving a nonlinear problem does not strongly affect the overall performance. We note that for the linear scheme the problem is linear. It can be observed that the convergence rate (against time) is much higher for the nonlinear scheme. The linear scheme requires less computational time for coarser meshes but the nonlinear scheme is more effective for tighter accuracies, \revv{specially for $q>1$.}

\begin{figure}[!]
	\centering
	\newlength{\figwidth}
	\setlength{\figwidth}{0.8\textwidth}
	\begin{subfigure}[t]{\figwidth}
		\includegraphics[width=\textwidth,trim={0 135mm 0 25mm},clip]{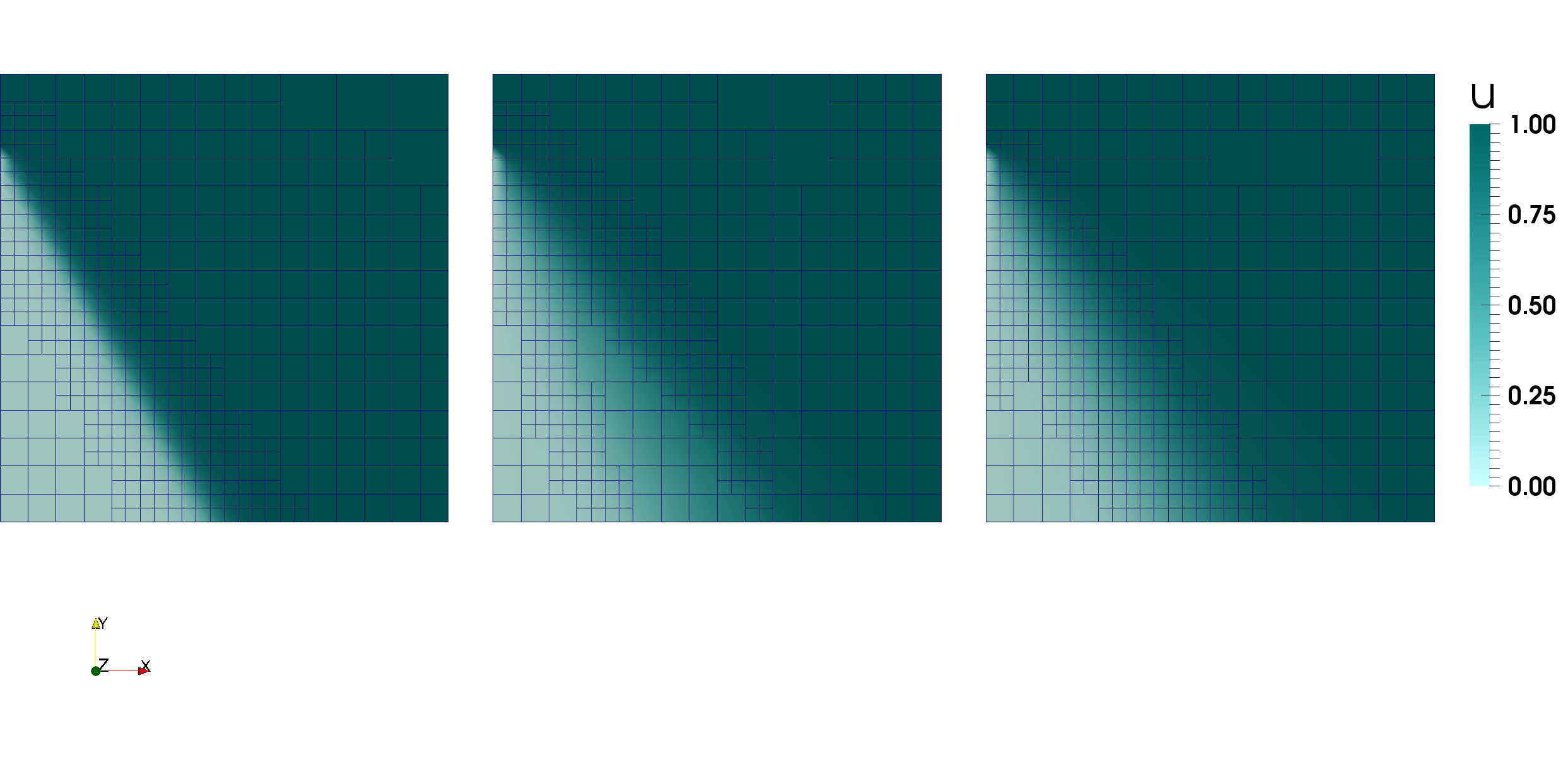}
	\end{subfigure}
	\begin{subfigure}[t]{\figwidth}
		\includegraphics[width=\textwidth,trim={0 135mm 0 25mm},clip]{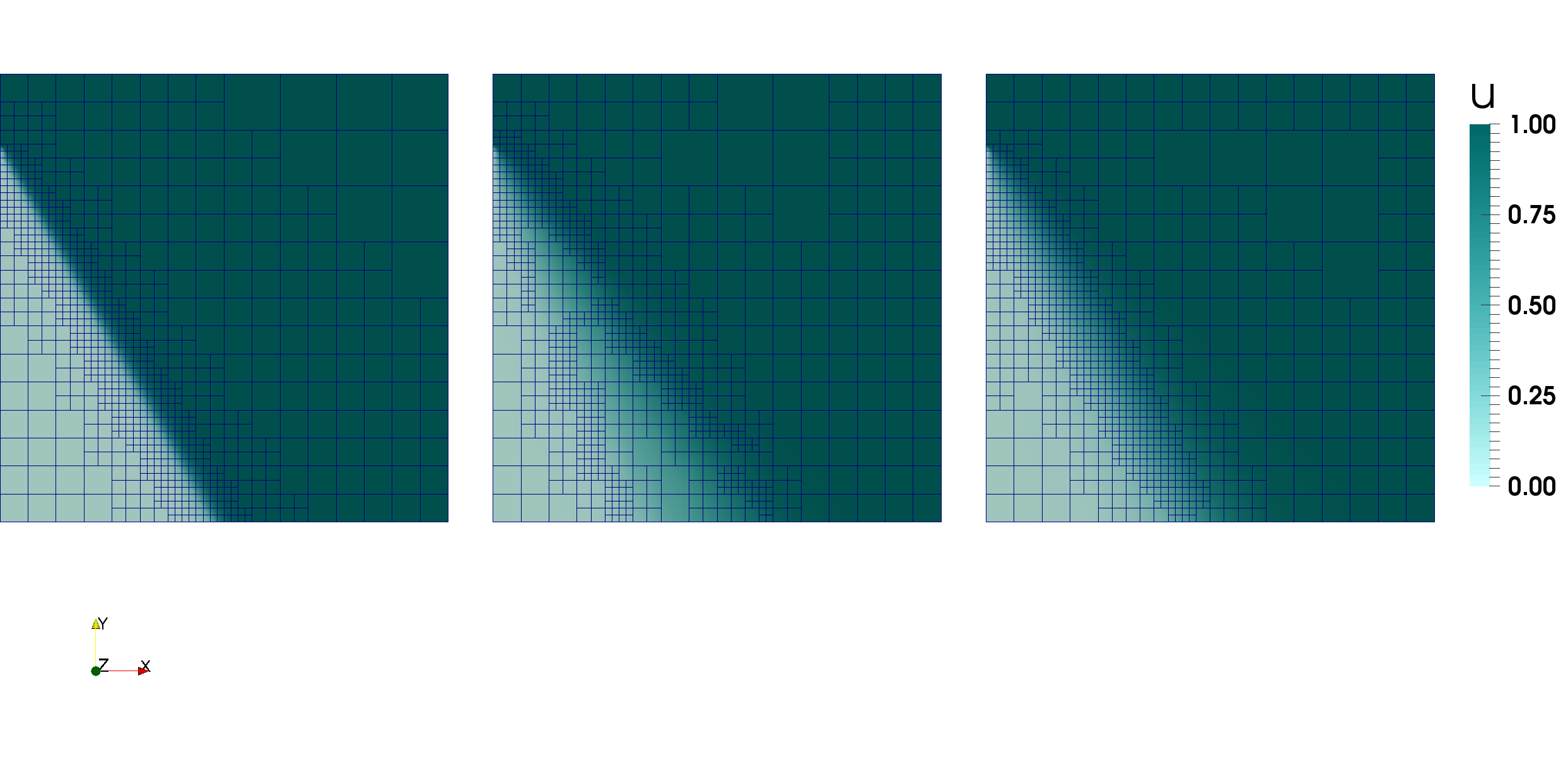}
	\end{subfigure}
	\begin{subfigure}[t]{\figwidth}
		\includegraphics[width=\textwidth,trim={0 135mm 0 25mm},clip]{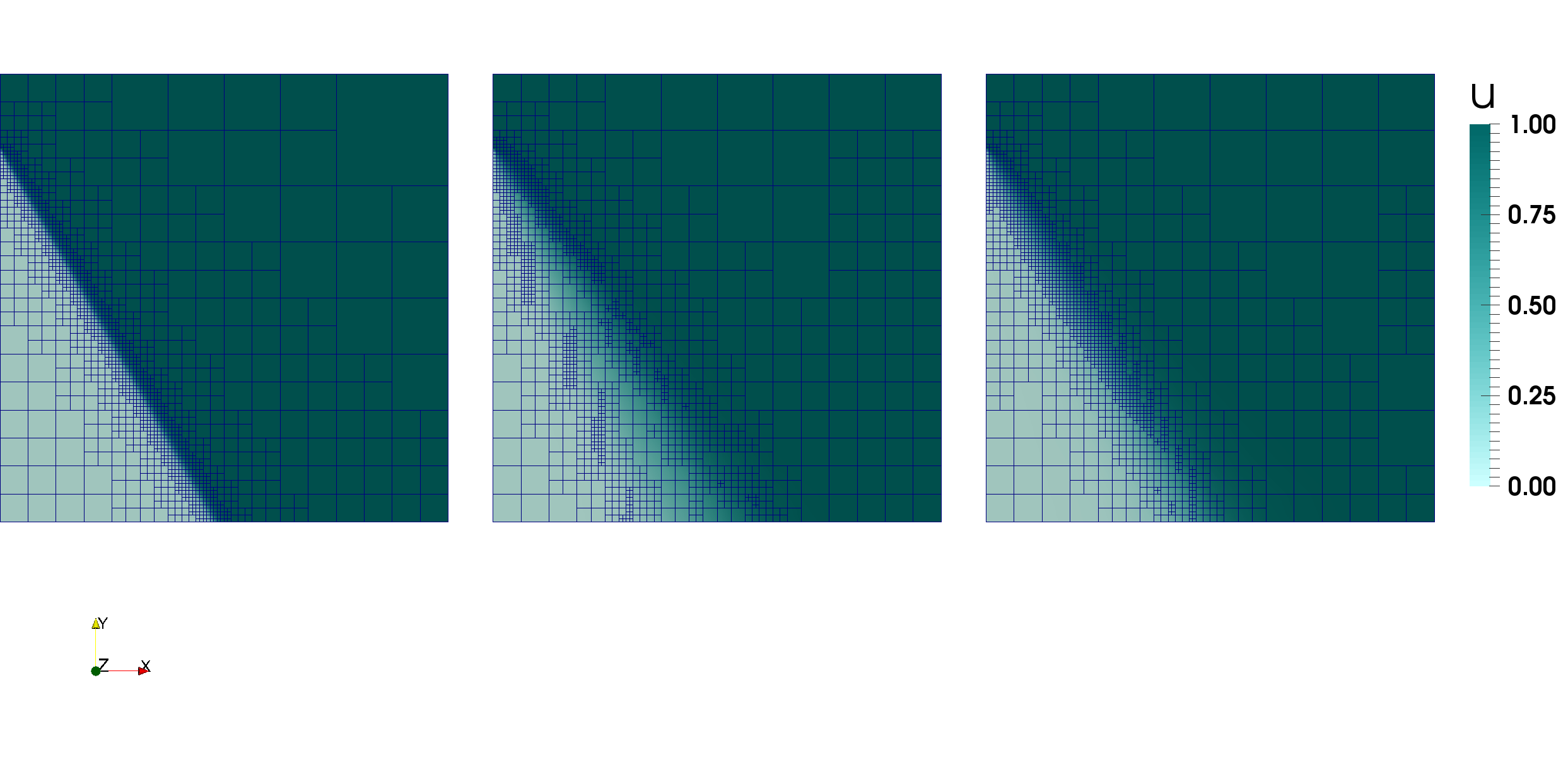}
	\end{subfigure}
	\begin{subfigure}[t]{\figwidth}
		\includegraphics[width=\textwidth,trim={0 135mm 0 25mm},clip]{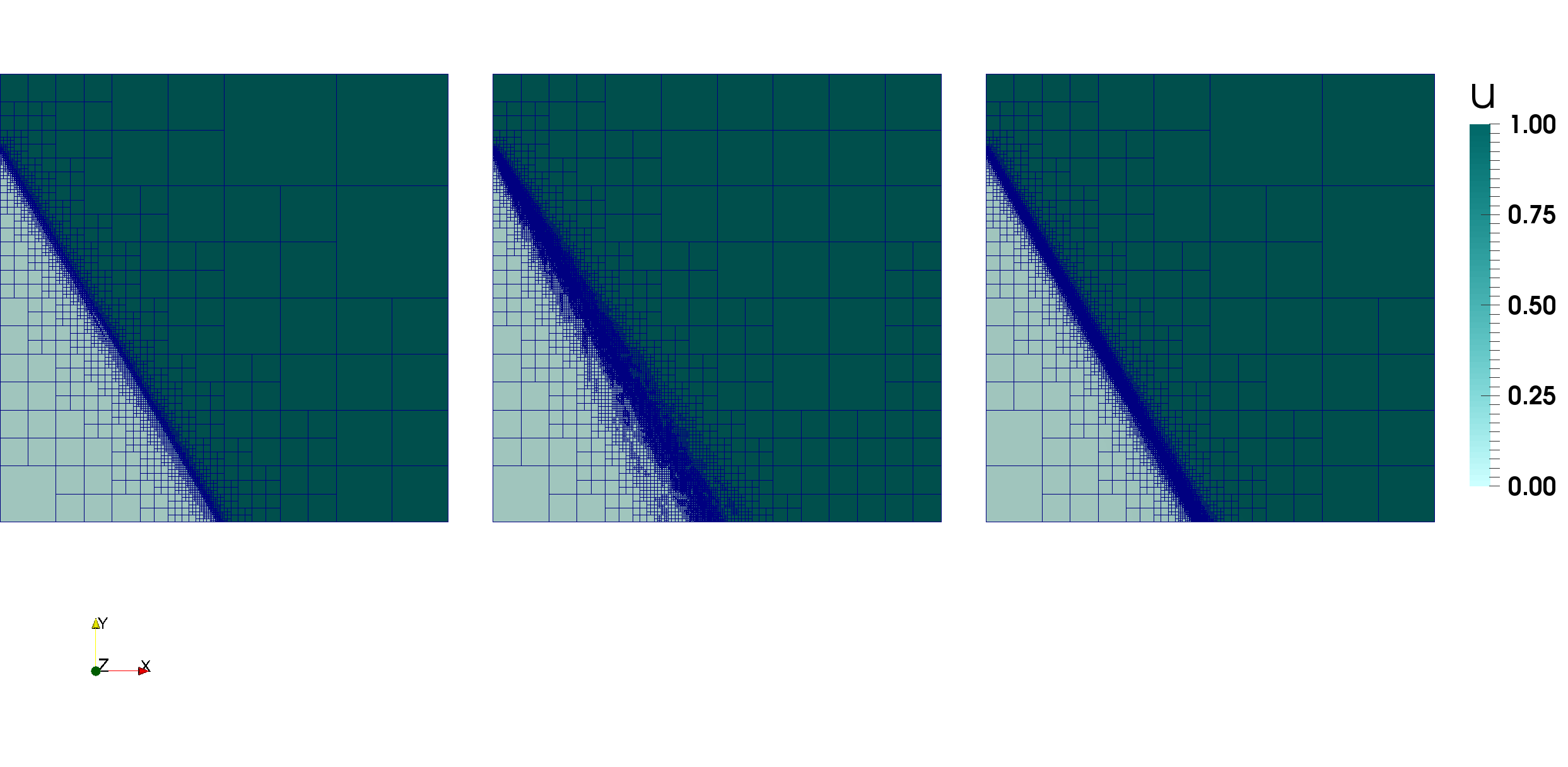}
	\end{subfigure}
	\begin{subfigure}[t]{\figwidth}
		\includegraphics[width=\textwidth,trim={0 135mm 0 25mm},clip]{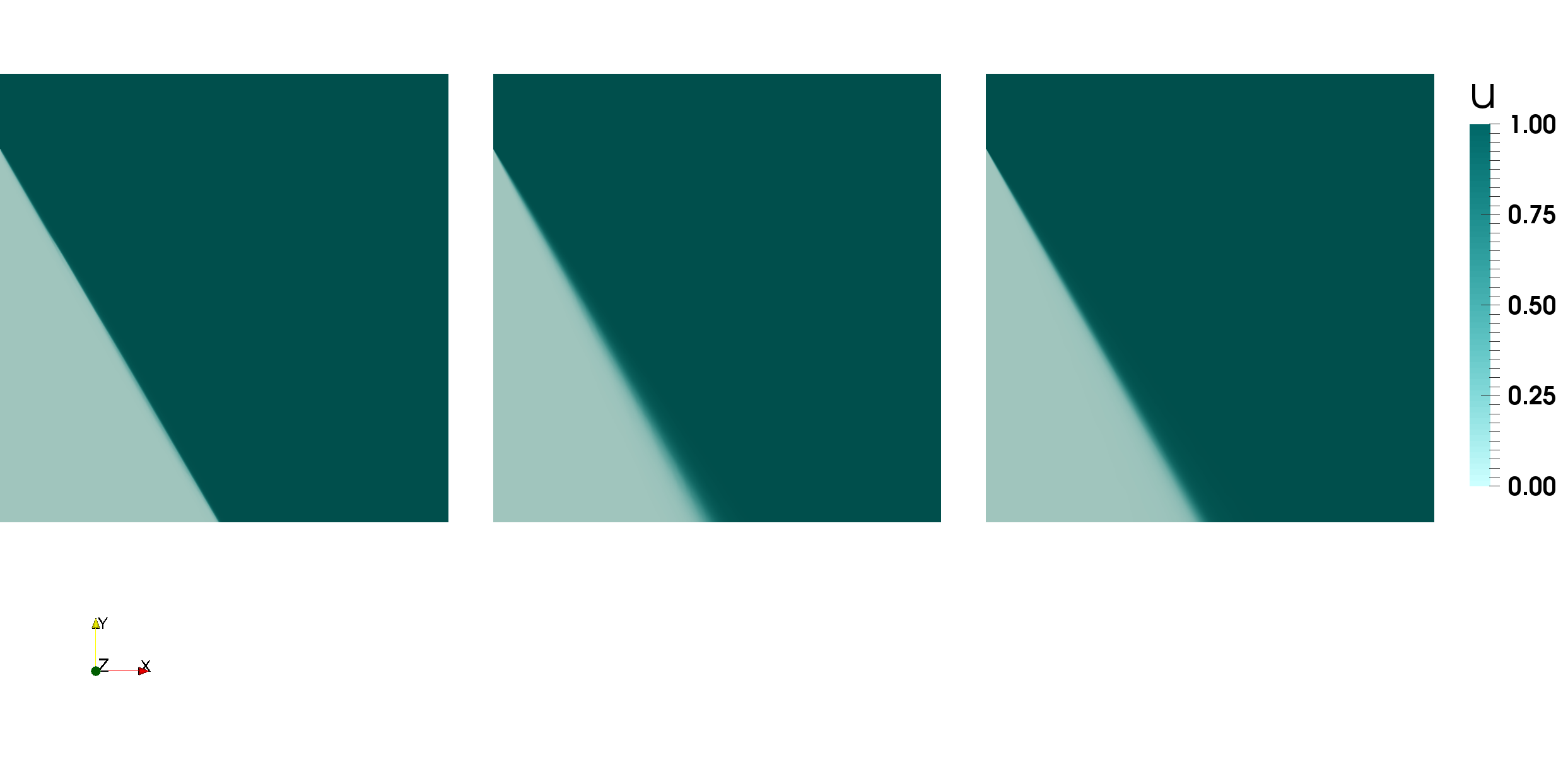}
	\end{subfigure}
	\caption{Evolution of the mesh refinement process. $\lestimator_K$ with high--order scheme is used in the left column. Low--order scheme with Kelly estimator is used in the central column. $\lestimator_K$ with low--order scheme is used in the right column. For the low--order scheme from top to bottom results have been obtained at refinement step 1, 2, 3, 9, and 9. For the high--order refinement steps are 1, 2, 3, 5, and 5.}
	\label{fig.linear-amr}
\end{figure}

We can observe in Fig.\ \ref{fig.linear-q10} the convergence problems of the nonlinear stabilization at some steps of the refinement procedure. Even though using $q=10$ improves the accuracy of the method, it also increases the computational cost since the nonlinear problem is harder to solve. \revv{As a consequence, the nonlinear stabilization needs a very refined mesh to overcome the performance of the linear stabilization.}

\begin{figure}[h]
	\centering
	\includegraphics[width=0.75\textwidth]{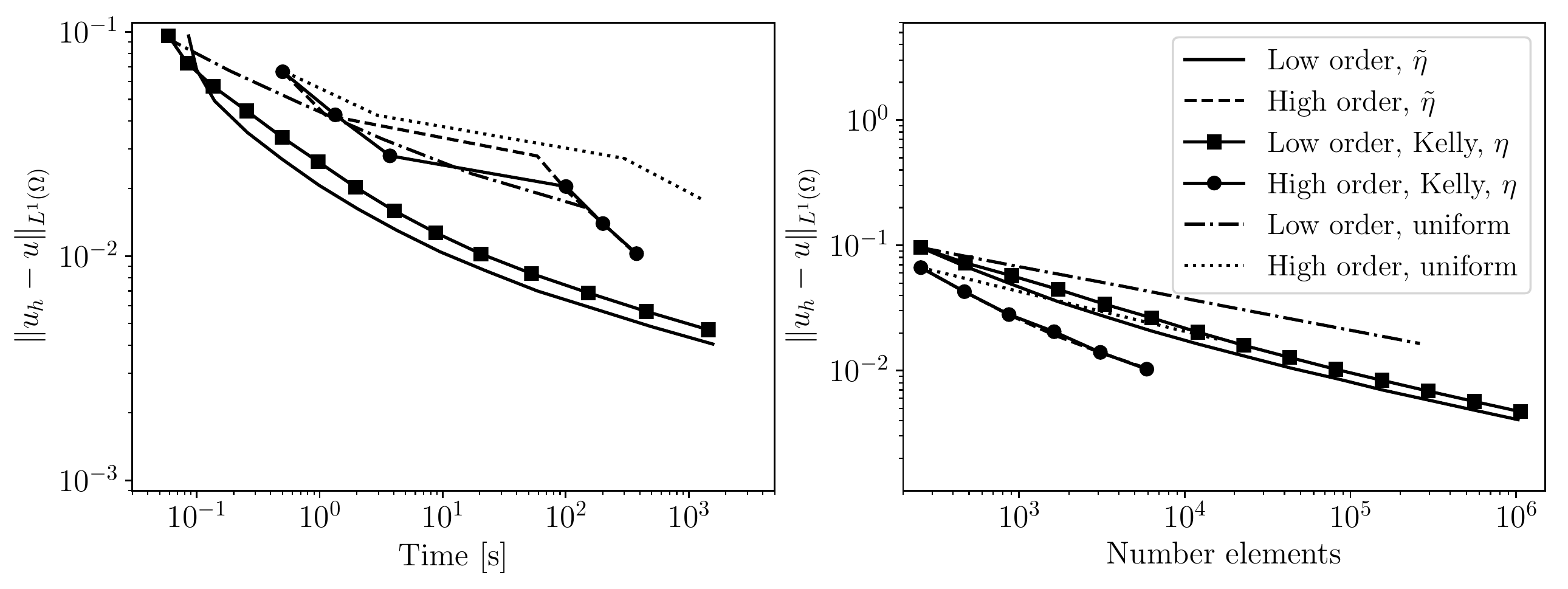}
	\caption{Time and elements convergence comparison for the transport problem with a linear discontinuity, $q=1$.}
	\label{fig.linear-q1}
\end{figure}

\begin{figure}[h]
	\centering
	\includegraphics[width=0.75\textwidth]{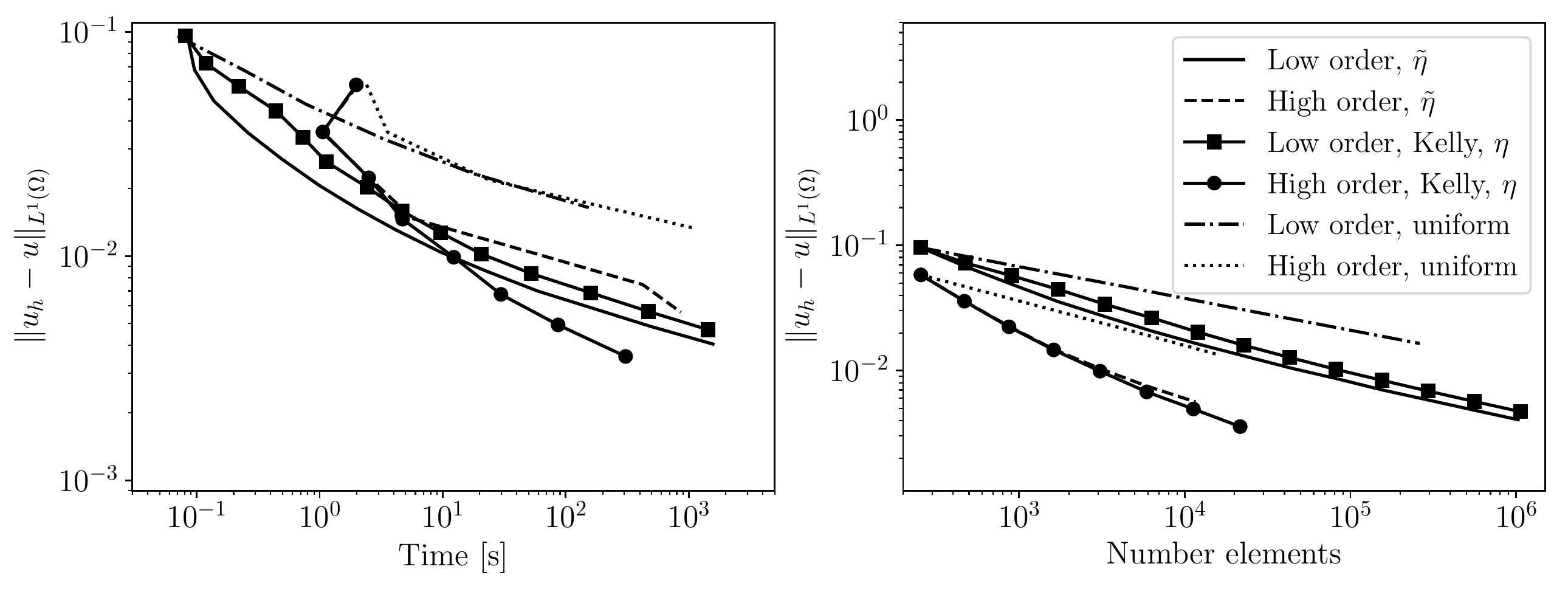}
	\caption{Time and elements convergence comparison for the transport problem with a linear discontinuity, $q=2$.}
	\label{fig.linear-q2}
\end{figure}

\begin{figure}[h]
	\centering
	\includegraphics[width=0.75\textwidth]{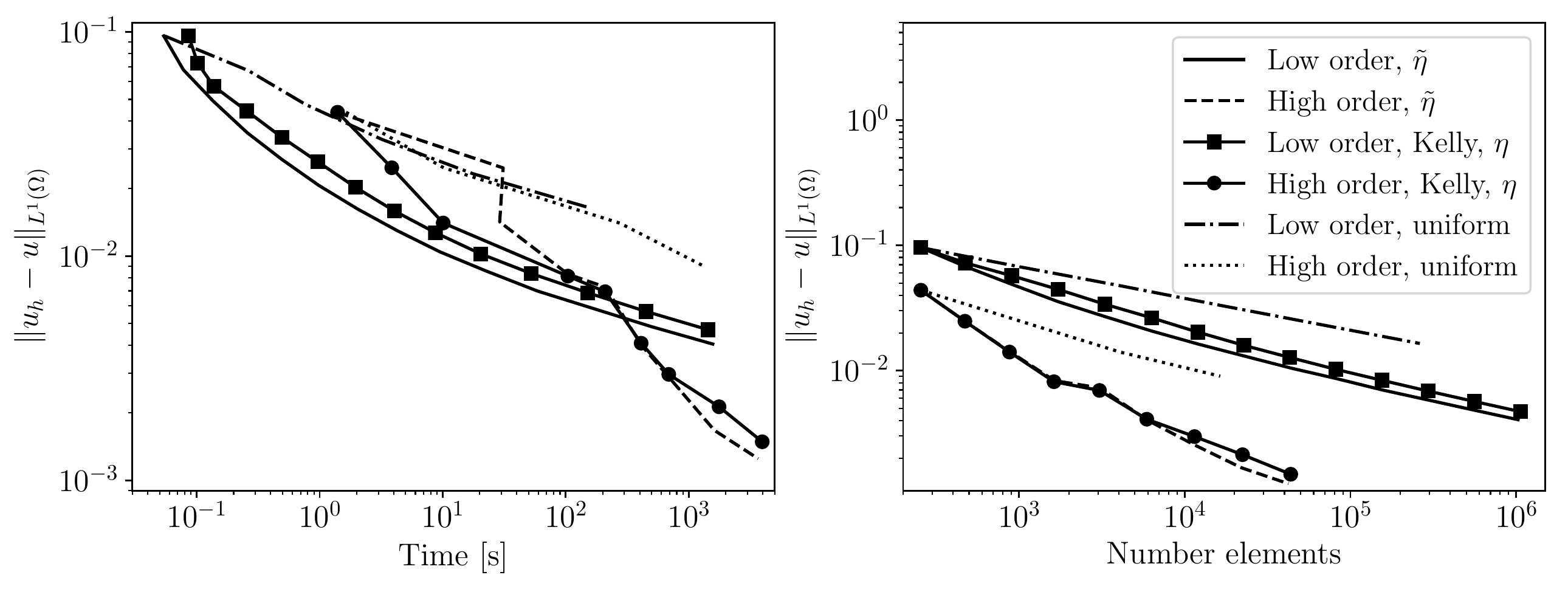}
	\caption{Time and elements convergence comparison for the transport problem with a linear discontinuity, $q=10$.}
	\label{fig.linear-q10}
\end{figure}

\subsection{Circular discontinuity}
We analyze again the effectiveness of the proposed \rev{refinement strategy} and the effectiveness of the linear and nonlinear stabilization methods for a slightly more complicated convective field.  For this test, we use \eqref{eq.6.steady_transport} with $\mathbf{v}(x,y) \doteq (y, -x)$, and inflow boundary conditions
\begin{align}
\uboundary(0,y) = \left\{\begin{array}{ll}
1 & y\in[0.15,0.45],\\
\cos^2\left(\frac{10}{3}\pi(y-0.4)\right) & y\in[0.55,0.85],\\
0 & {\rm elsewhere}.
\end{array} \right.
\end{align}
The analytical solution of this particular configuration consists in the transport of the inflow profile in the direction of the convection. As a result, the solution at the outflow boundary, corresponding to $y=0$, is 
$
u(x,0) = \uboundary(0,x).
$
We start with a coarse mesh of $16\times 16$ elements in all cases, and proceed adapting the mesh up to a maximum number of elements. For the nonlinear stabilization, we set a maximum of \revv{$5\cdot10^4$} elements. The maximum number of elements for the linear stabilization is \revv{$2\cdot10^6$}. We use a nonlinear tolerance of $\|\Delta \bunk\|/\|\bunk\|<10^{-4}$, and a maximum of 500 iterations. 

\begin{figure}[h]
	\centering
	\includegraphics[width=0.75\textwidth]{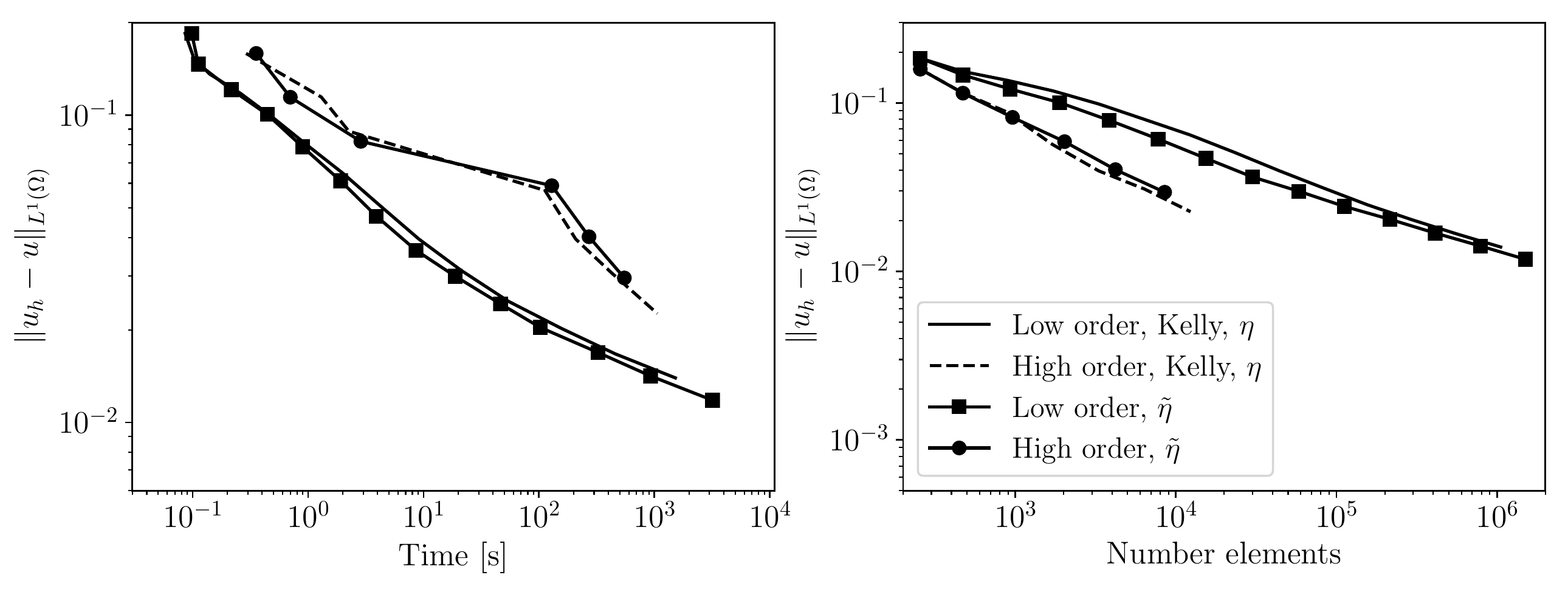}
	\caption{Time and elements convergence comparison for the transport problem with a circular convection field, $q=1$.}
	\label{fig.circular-q1}
\end{figure}

\begin{figure}[h]
	\centering
	\includegraphics[width=0.75\textwidth]{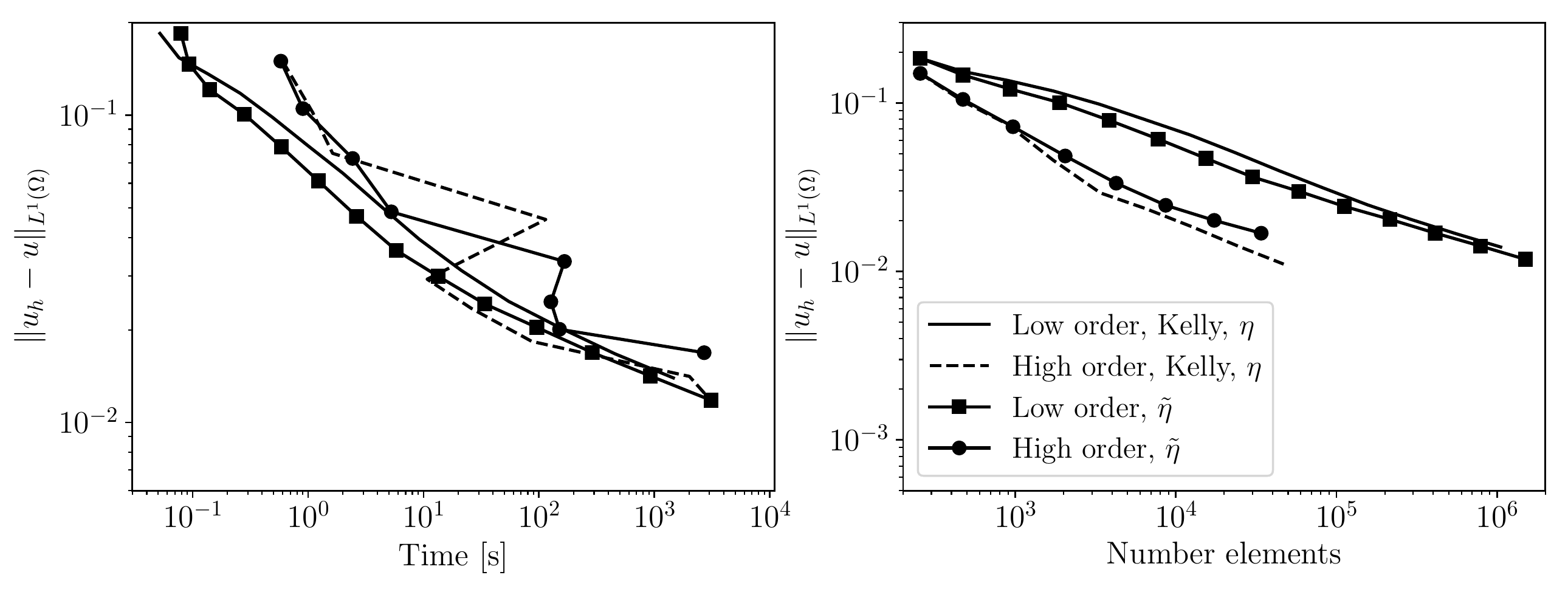}
	\caption{Time and elements convergence comparison for the transport problem with a circular convection field, $q=2$.}
	\label{fig.circular-q2}
\end{figure}

\begin{figure}[h]
	\centering
	\includegraphics[width=0.75\textwidth]{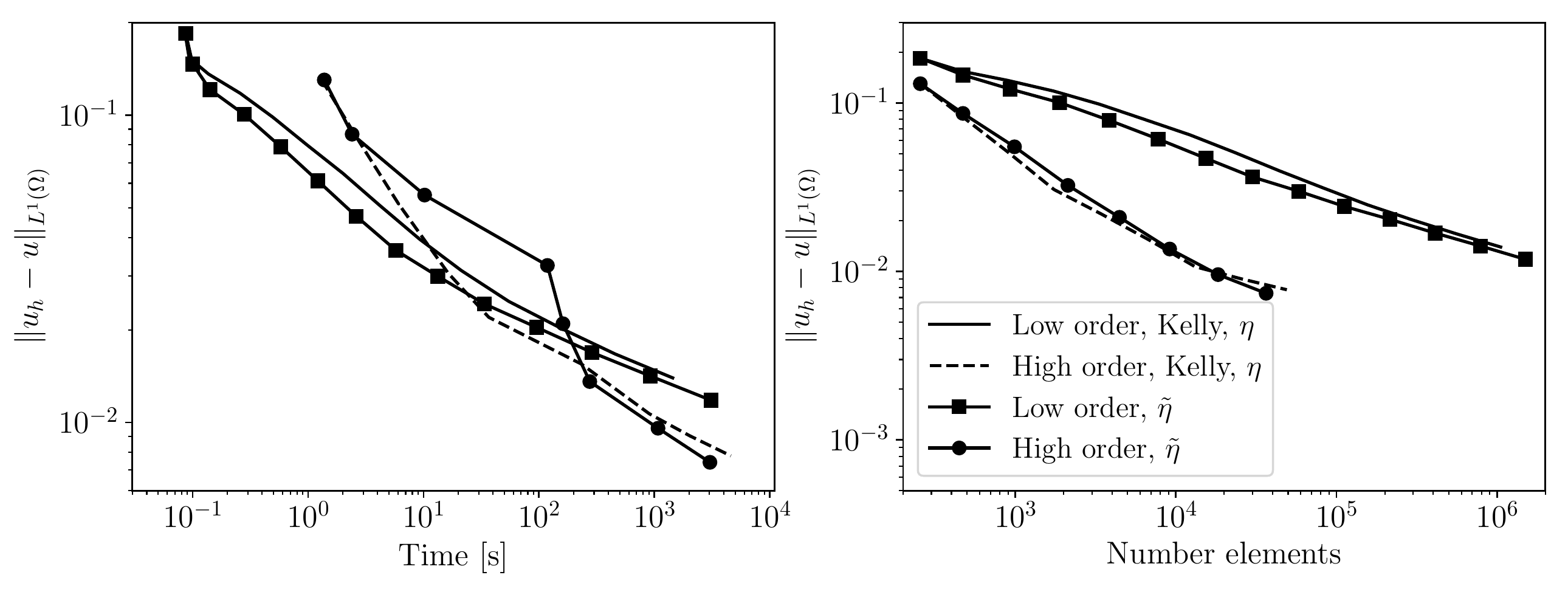}
	\caption{Time and elements convergence comparison for the transport problem with a circular convection field, $q=10$.}
	\label{fig.circular-q10}
\end{figure}

\begin{figure}[!]
	\centering
	\setlength{\figwidth}{0.81\textwidth}
	\begin{subfigure}[t]{\figwidth}
		\includegraphics[width=\textwidth,trim={0 135mm 0 25mm},clip]{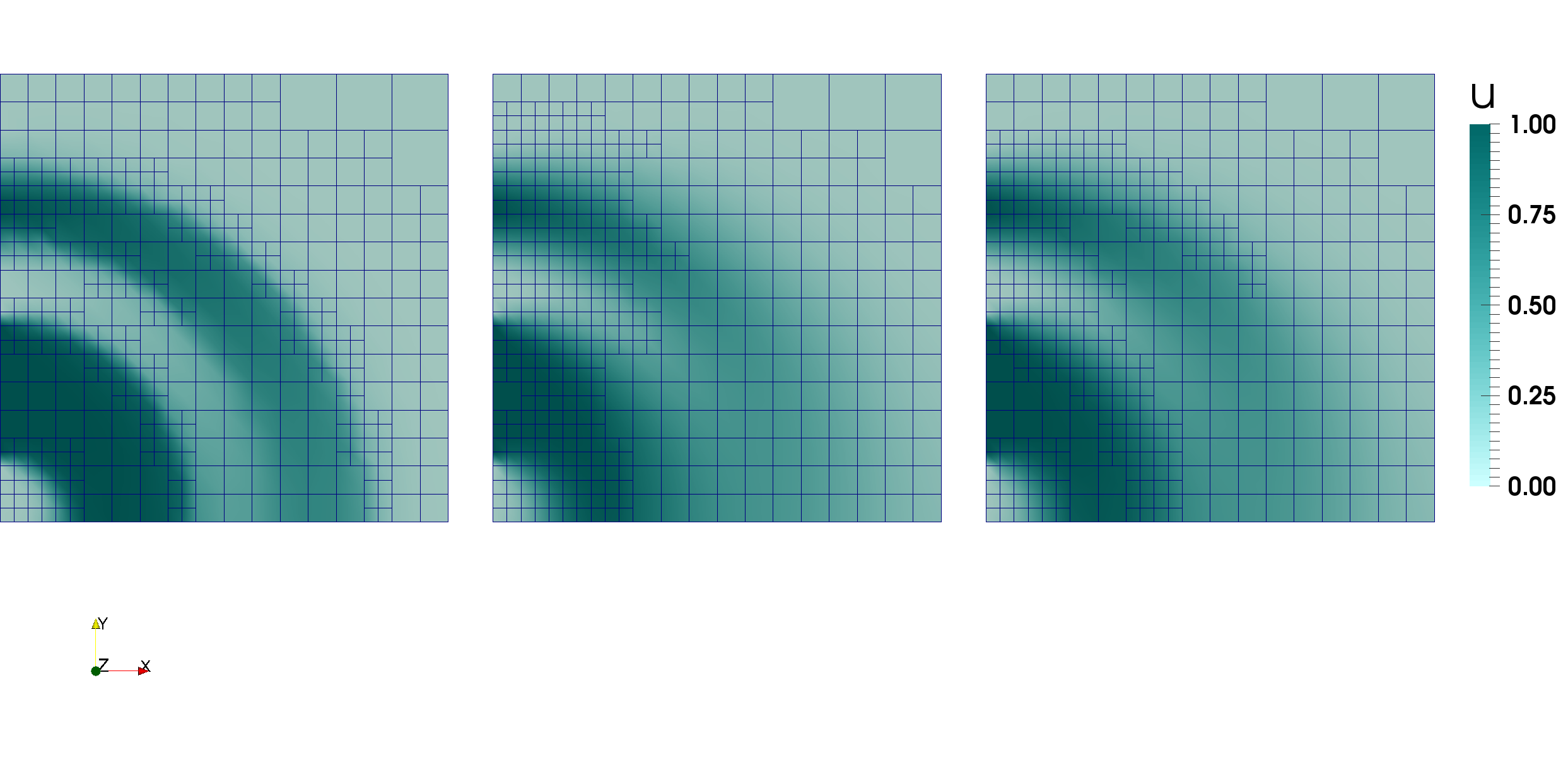}
	\end{subfigure}
	\begin{subfigure}[t]{\figwidth}
		\includegraphics[width=\textwidth,trim={0 135mm 0 25mm},clip]{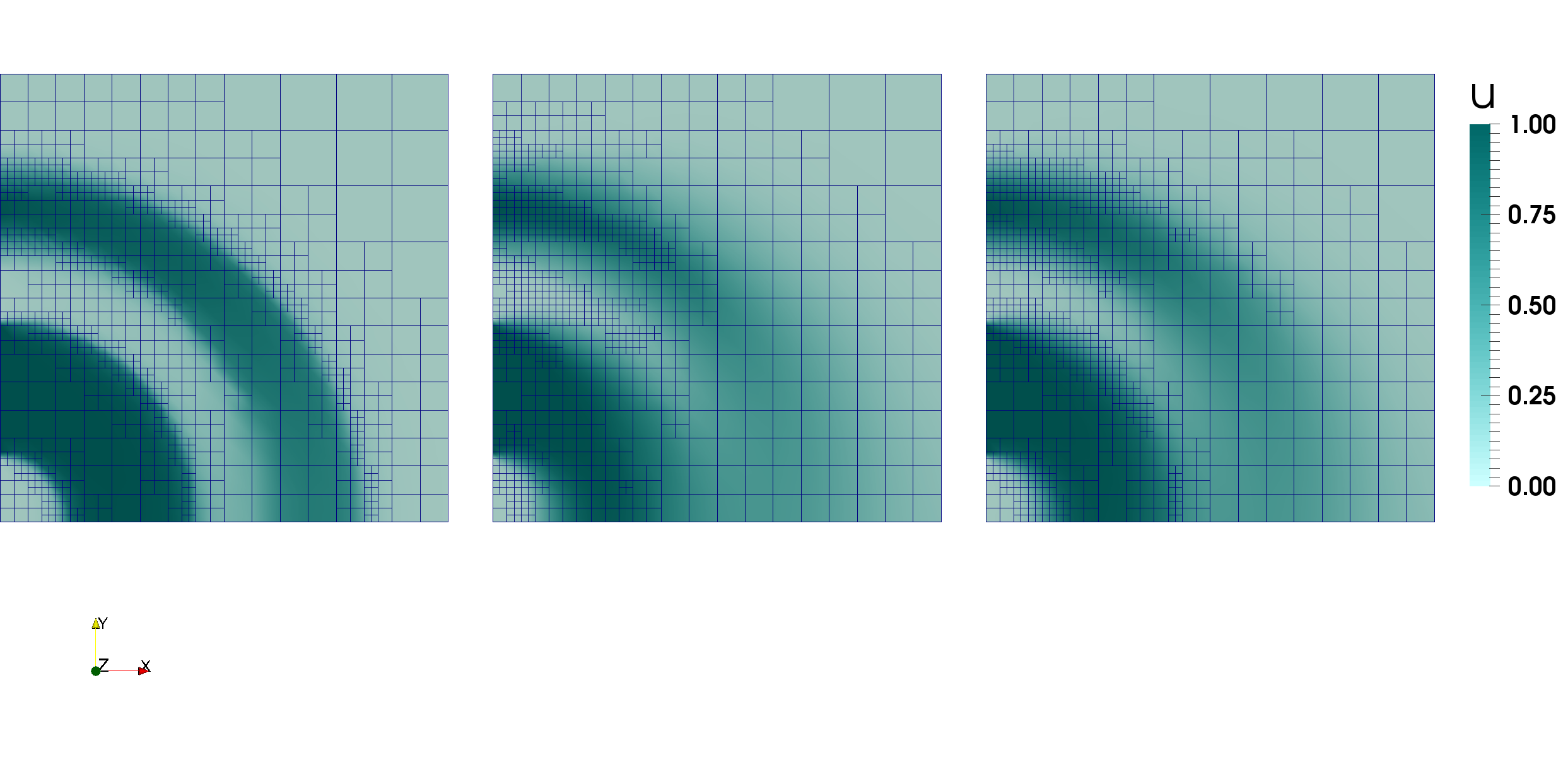}
	\end{subfigure}
	\begin{subfigure}[t]{\figwidth}
		\includegraphics[width=\textwidth,trim={0 135mm 0 25mm},clip]{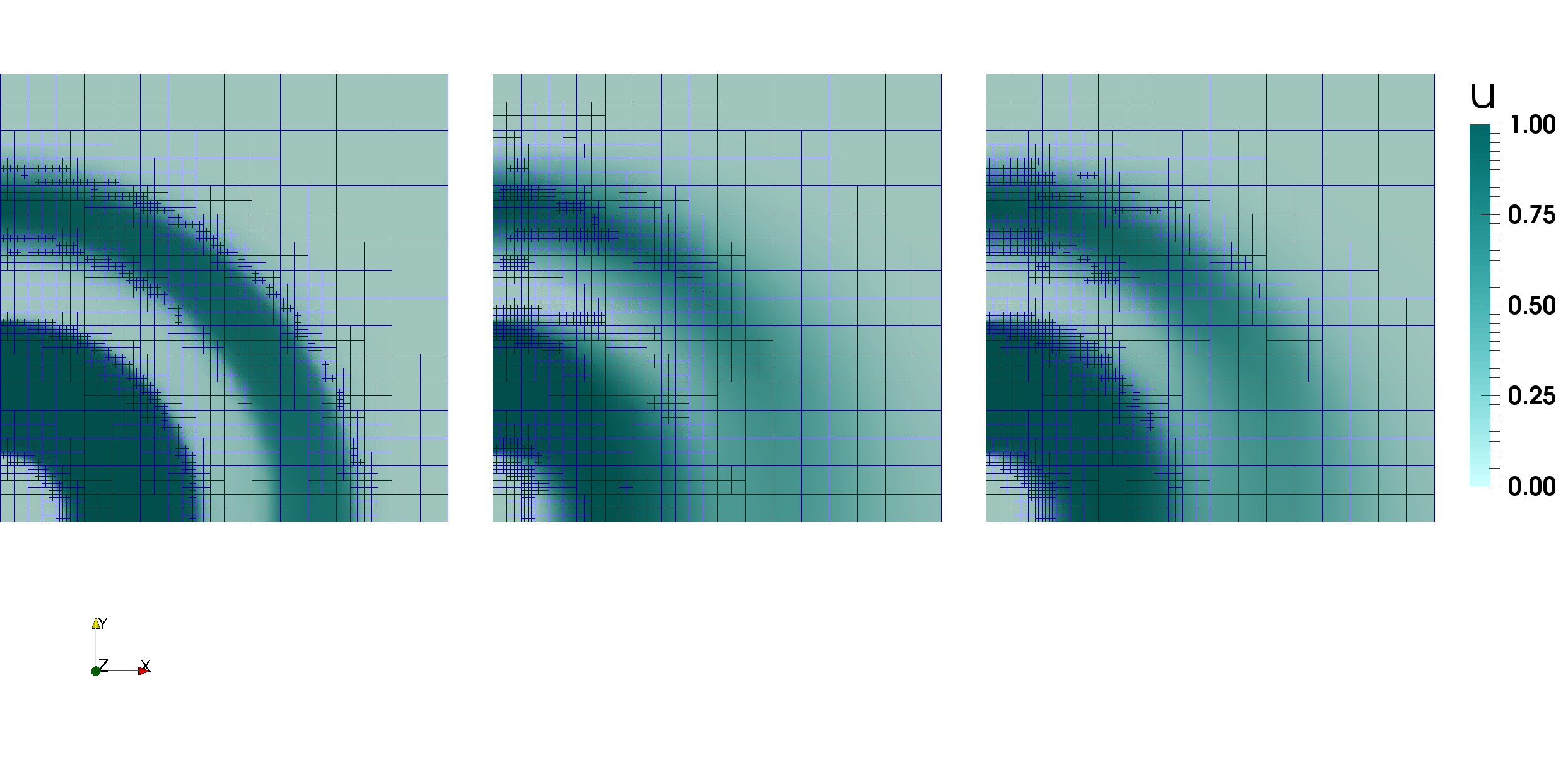}
	\end{subfigure}
	\begin{subfigure}[t]{\figwidth}
		\includegraphics[width=\textwidth,trim={0 135mm 0 25mm},clip]{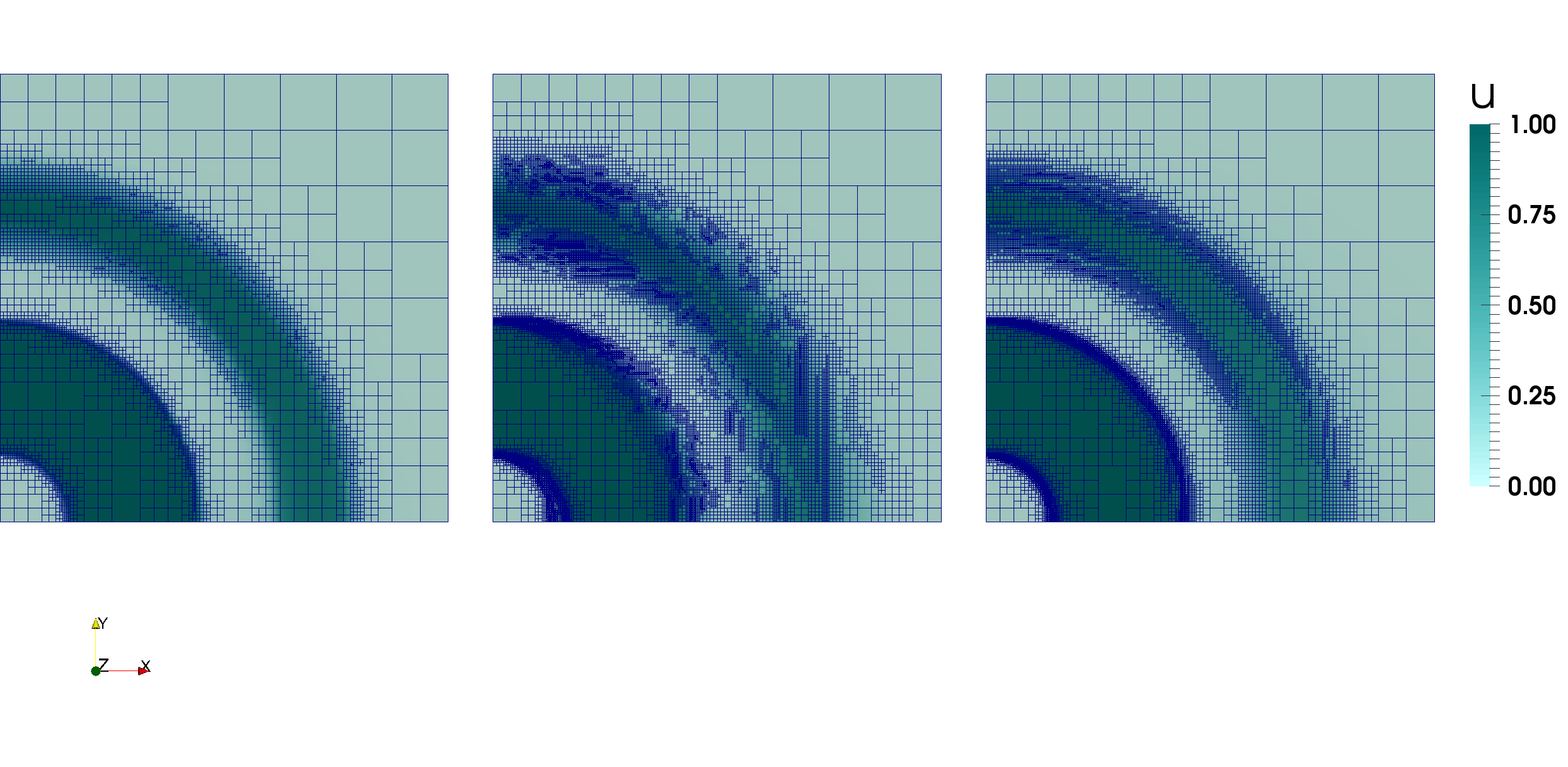}
	\end{subfigure}
	\begin{subfigure}[t]{\figwidth}
		\includegraphics[width=\textwidth,trim={0 135mm 0 25mm},clip]{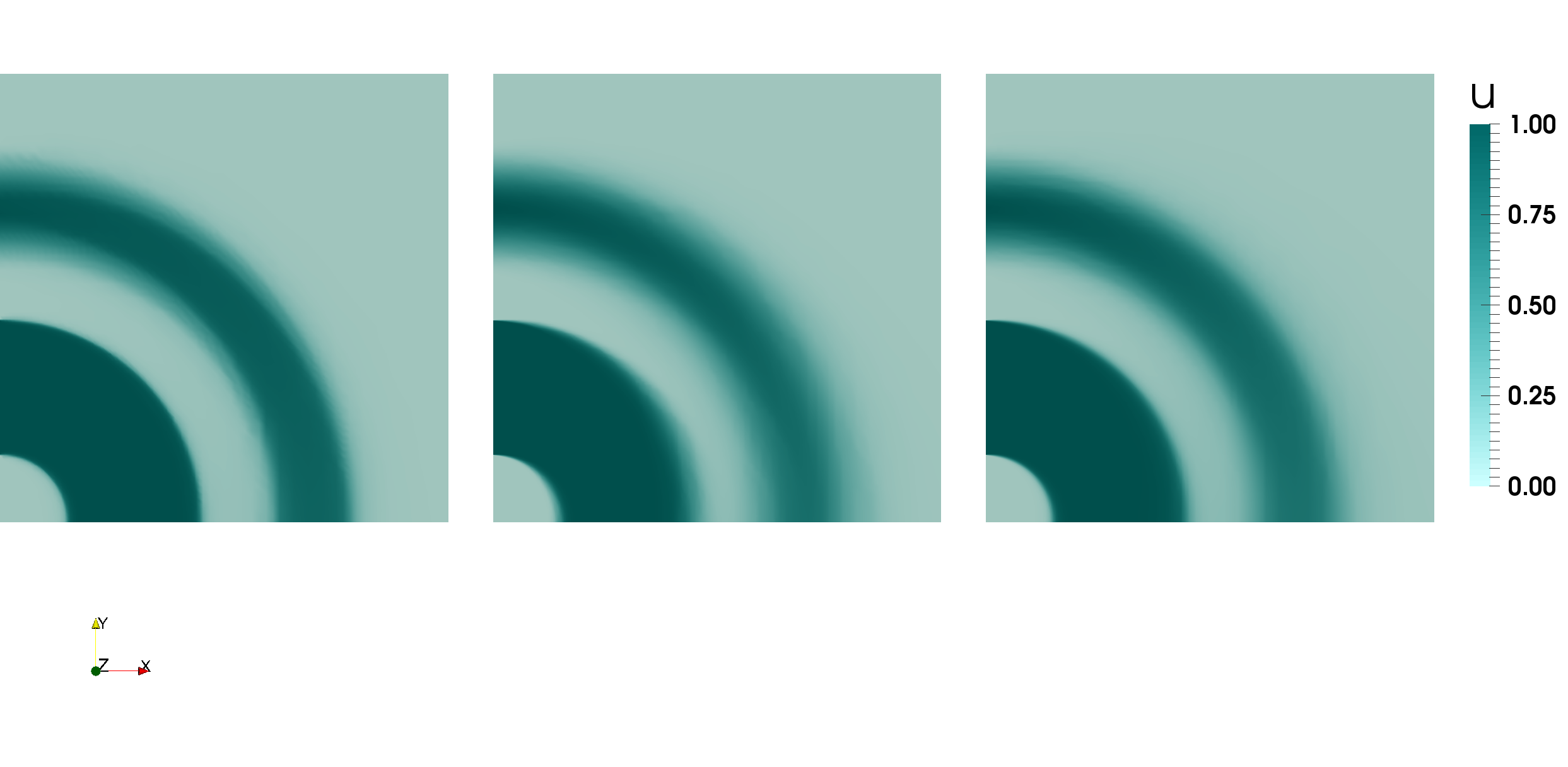}
	\end{subfigure}
	\caption{Evolution of the mesh refinement process. $\lestimator_K$ with high--order scheme is used in the left column. Low--order scheme with Kelly estimator is used in the central column. $\lestimator_K$ with low--order scheme is used in the right column. For the low--order scheme from top to bottom results have been obtained at refinement step 1, 2, 3, 7, and 7. For the high--order refinement steps are 1, 2, 3, 4, and 4.}
	\label{fig.circular-amr}
\end{figure}

Figs.\ \ref{fig.circular-q1}--\ref{fig.circular-q10} compare the effectiveness of the linear and nonlinear stabilization. These results use the stabilization parameter $q=1$, $q=2$, and $q=10$, respectively. In Fig.\ \ref{fig.circular-q2}, the high-order scheme is able to converge efficiently and the overhead of solving a nonlinear problem does not strongly affect the overall performance. Nevertheless, the low-order scheme \revv{usually} requires \revv{similar} computational time for any given error. However, it can be observed that the convergence rate (in time) \revv{using the Kelly error estimator is slightly} higher for the high-order scheme. \revv{Actually, it outperforms the low-order scheme for the finer meshes.}

In contrast, we do not observe the significant convergence problems in Fig.\ \ref{fig.circular-q10} even though the linear stabilization is slightly more efficient \revv{for coarse meshes}. The convergence rate (in time) is higher for the nonlinear stabilization and it \revv{is actually more efficient for the finer meshes}.

Fig.\ \ref{fig.circular-amr} shows the evolution of the \ac{amr} algorithm for both $\estimator_K$ and $\lestimator_K$ with the linear stabilization and $\lestimator_K$ with the nonlinear one. It can be observed that both Kelly ($\estimator_K$) \rev{estimator} and graph Laplacian indicator detect the regions that require more resolution. In any case, as in the previous example, the graph Laplacian operator ($\lestimator_K$) performs slightly better.

\subsection{Compression corner}

Let us consider now the Euler equations. We start with the compression corner test (see Fig.\ \ref{fig.corner}). We analyze the effectiveness of the high-order scheme, and evaluate the performance of the graph Laplacian \rev{indicator}. We start with a coarse mesh of $16\times 16$ elements, and adapt it up to a maximum number of elements. For the high-order method, we set a maximum of $5\cdot10^3$ elements. The maximum number of elements for the low-order method is $5\cdot10^4$. We use a nonlinear tolerance of $\|\Delta \bunk\|/\|\bunk\|<10^{-4}$ and a maximum of 500 iterations. 

In Fig.\ \ref{fig.compression-amr}, we depict the refinement evolution for the graph Laplacian \rev{indicator} ($\lestimator_{\element}$) for linear and nonlinear stabilization. As expected, we can observe that for the high-order method the scheme is able to resolve the shock with less refinement steps. The linear stabilization is able to provide well-resolved shocks at the final refinement step.

\begin{figure}[!]
	\centering
	\setlength{\figwidth}{0.58\textwidth}
	\begin{subfigure}[t]{\figwidth}
		\includegraphics[width=\textwidth,trim={270mm 135mm 0 25mm},clip]{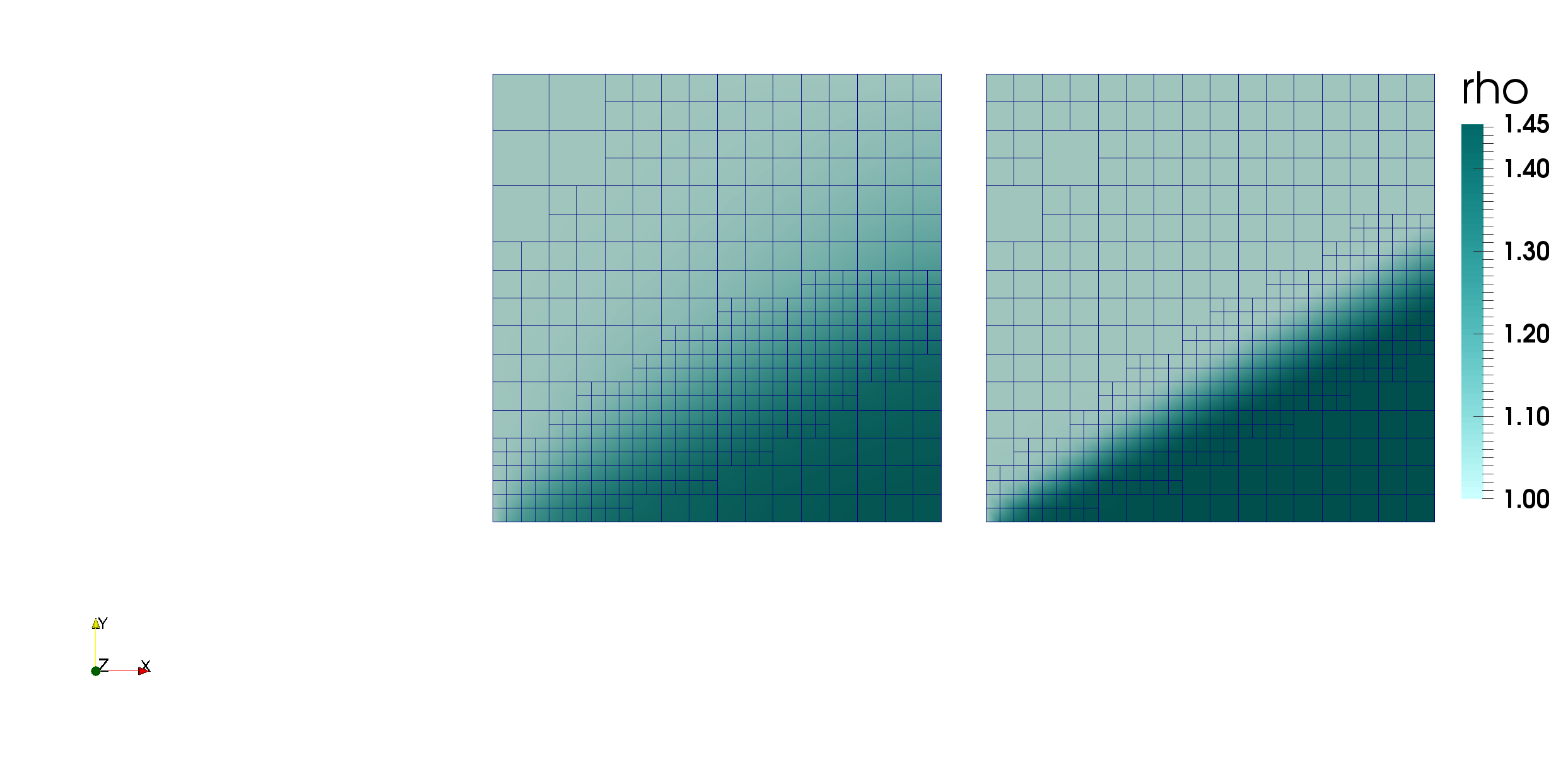}
	\end{subfigure}
	\begin{subfigure}[t]{\figwidth}
		\includegraphics[width=\textwidth,trim={270mm 135mm 0 25mm},clip]{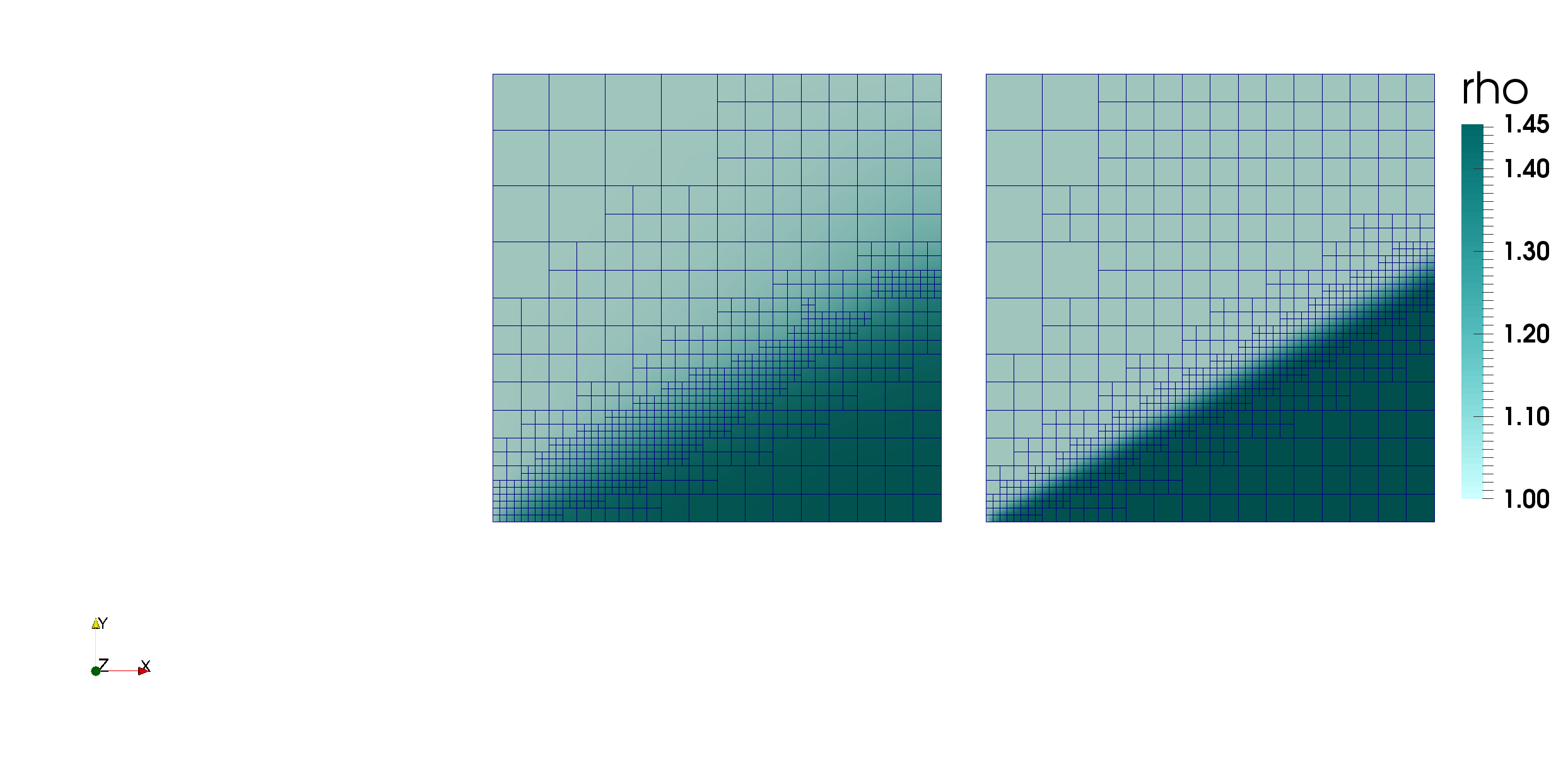}
	\end{subfigure}
	\begin{subfigure}[t]{\figwidth}
		\includegraphics[width=\textwidth,trim={270mm 135mm 0 25mm},clip]{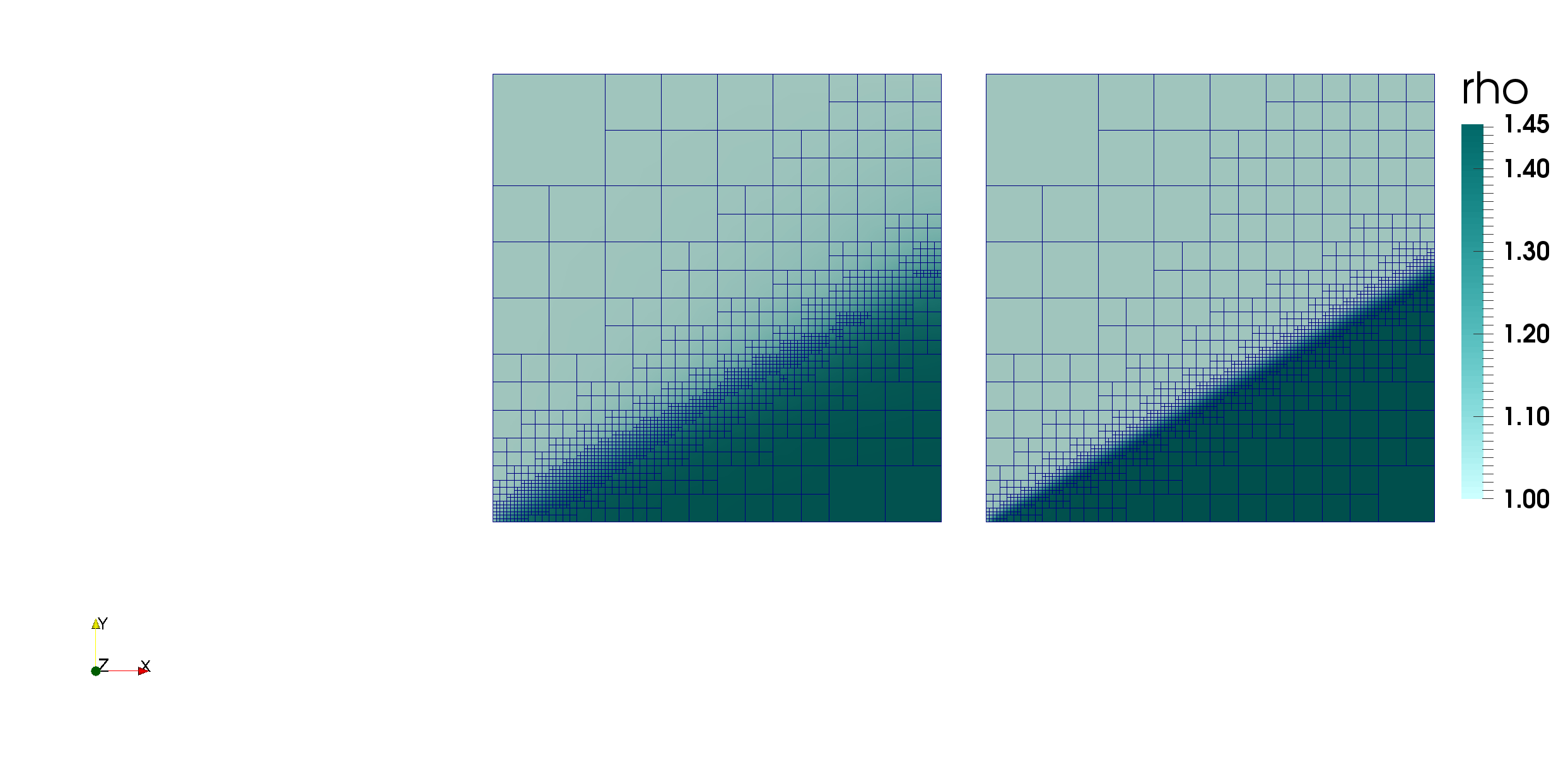}
	\end{subfigure}
	\begin{subfigure}[t]{\figwidth}
		\includegraphics[width=\textwidth,trim={270mm 135mm 0 25mm},clip]{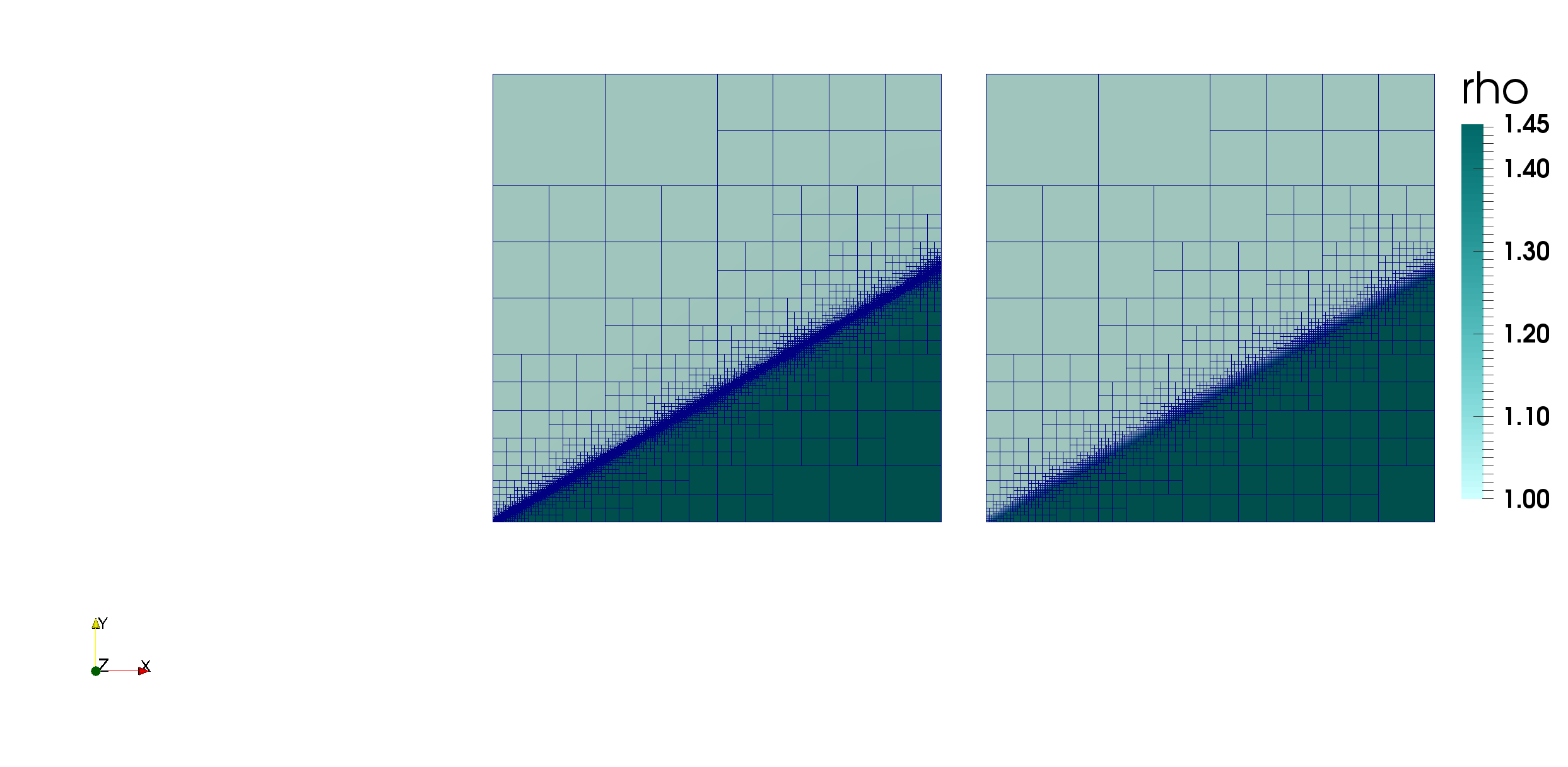}
	\end{subfigure}
	\begin{subfigure}[t]{\figwidth}
		\includegraphics[width=\textwidth,trim={270mm 135mm 0 25mm},clip]{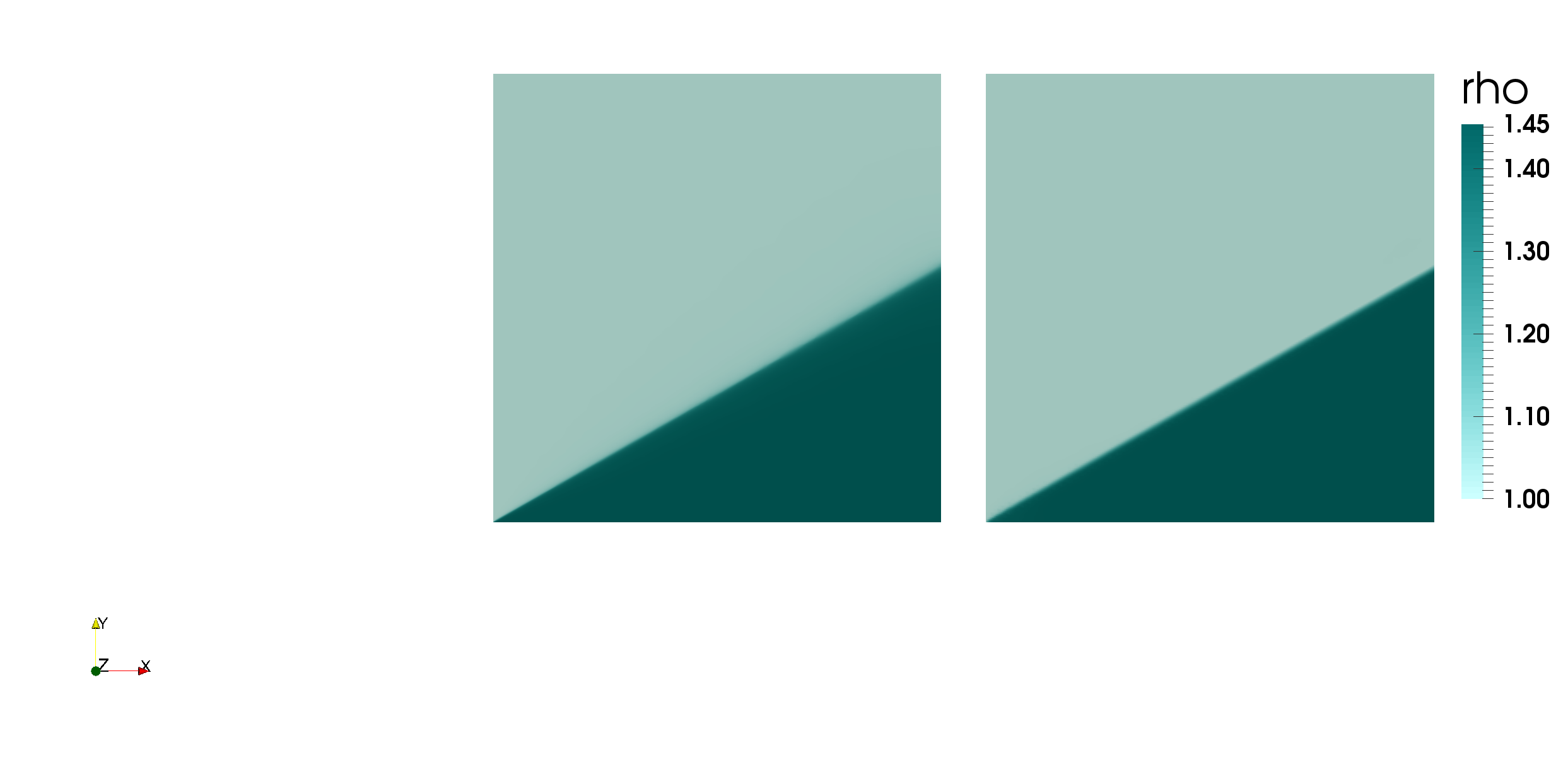}
	\end{subfigure}
	\caption{Evolution of the mesh refinement process. $\lestimator_K$ with high--order (right) and low--order (left) schemes are used. For the low--order scheme from top to bottom results have been obtained at refinement step 1, 2, 3, 8, and 8. For the high--order refinement steps are 1, 2, 3, 4, and 4.}
	\label{fig.compression-amr}
\end{figure}

Fig.\ \ref{fig.corner-conv} compares the effectiveness of the low-order and the high-order stabilization schemes for different values of $q$. The high-order scheme is able to converge efficiently and the overhead of solving a nonlinear problem does not affect the overall performance. In this case, the low-order and the high-order schemes require similar computational time for any given error. Actually, for the finer meshes, the high-order scheme with either $q=1$ or $q=2$ already performs better than the low-order scheme. However, for some meshes the high-order scheme exhibits convergence problems. In the case of $q=10$ the cost of converging the nonlinear problem does not compensate the increase in computational cost.

\begin{figure}[h]
\centering
\includegraphics[width=0.9\textwidth]{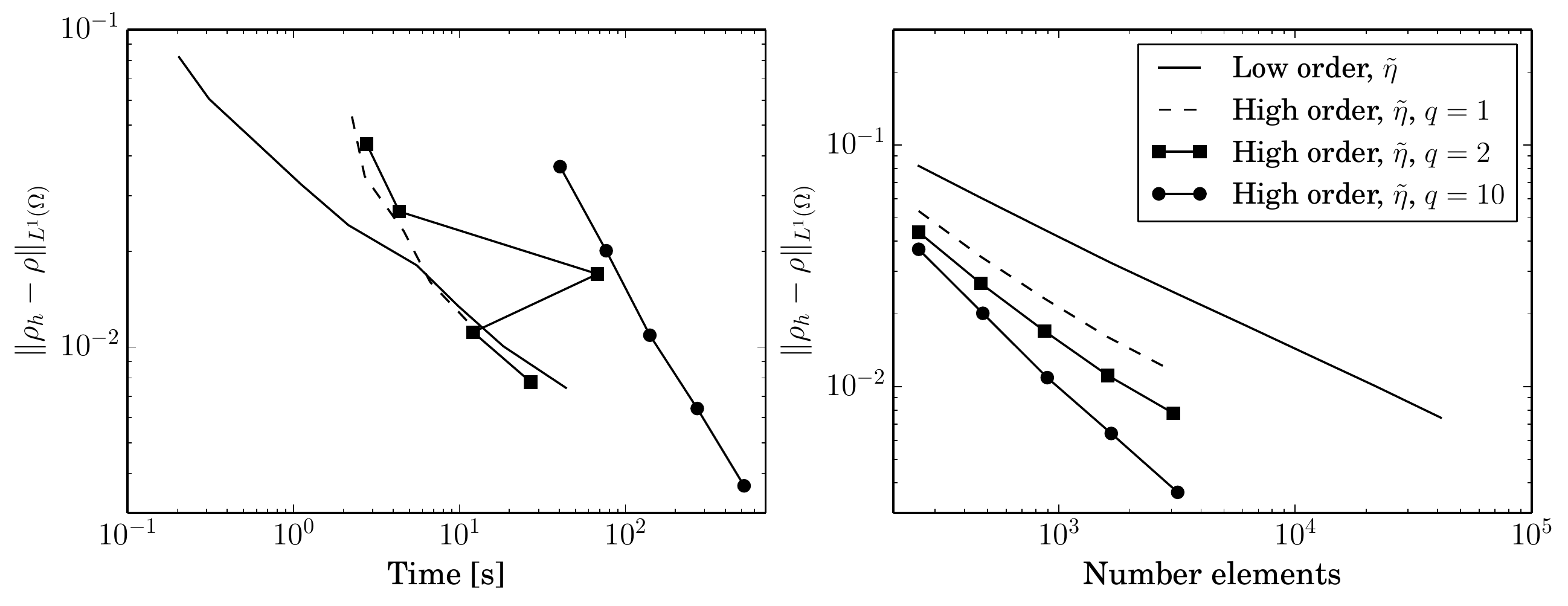}
\caption{Time and elements convergence comparison for the compression corner problem.}
\label{fig.corner-conv}
\end{figure}

\subsection{Reflected shock}

This benchmark consists in two flow streams colliding at different angles. The domain has dimensions $[0.0,1.0]\times[0.0,4.1]$ and a solid wall at its lower boundary. This configuration leads to a steady shock separating both flow regimes that is reflected at the wall producing a third different flow state behind it. A sketch of this benchmark test is given in Fig.\ \ref{fig.reflected-scheme}. The flow states at each region have been collected in Tab.\ \ref{tab.reflected-shock}.

\begin{table}[h]
	\centering
	\caption{Reflected shock solution values at every region.}
	\label{tab.reflected-shock}
	\begin{tabular}{cccc} \hline
		Region & Density [Kg\,m$^{-3}$] & Velocity [m\,s$^{-1}$] & Total energy [J] \\ \hline
		\textcircled{a} & 1.0 & (2.9, 0.0) & 5.99075 \\
		\textcircled{b} & 1.7 & (2.62, -0.506)& 5.8046 \\
		\textcircled{c} & 2.687 & (2.401, 0.0) & 5.6122 \\ \hline
	\end{tabular}
\end{table}

\begin{figure}[!h]
	\centering
	\includegraphics[width=0.6\textwidth]{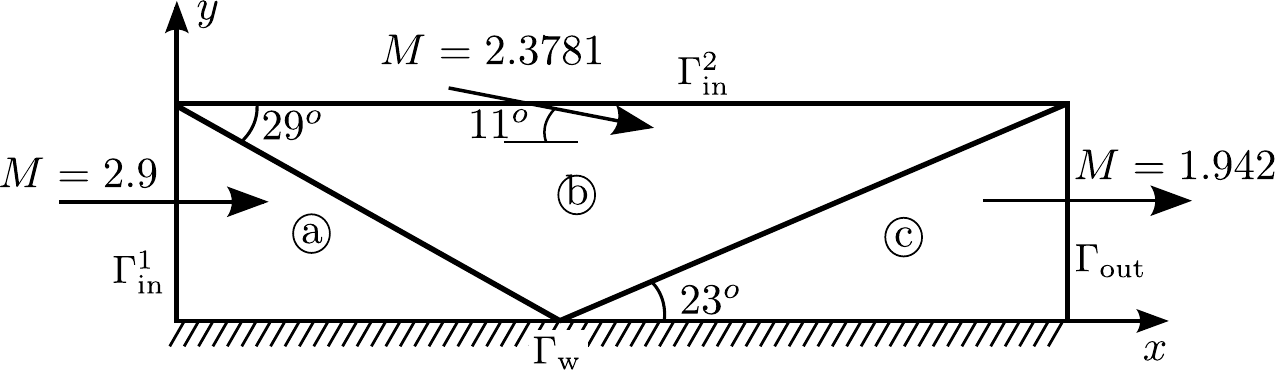}
	\caption{Reflected shock scheme.}
	\label{fig.reflected-scheme}
\end{figure}

We analyze the effectiveness of the high-order scheme, and evaluate the performance of the graph Laplacian \rev{refinement strategy}. We start with a coarse mesh of $16\times 64$ elements and adapt the mesh till a certain number of elements is reached. For the high-order method, we set a maximum of $10^4$ elements. The maximum number of elements for the low-order method is $3\cdot10^5$. We use a nonlinear tolerance of $\|\Delta \bunk\|/\|\bunk\|<10^{-4}$ and a maximum of 500 iterations. 

Fig.\ \ref{fig.reflected-conv} compares the effectiveness of the low-order and the high-order stabilization schemes for different values of $q$. The high-order scheme converges efficiently and the overhead of solving a nonlinear problem does not affect the overall performance. Actually, for the most refined meshes the high-order method is more efficient than the low-order one. As for the previous problem, Fig.\ \ref{fig.reflected-conv} shows that the high-order scheme can present nonlinear convergence problems at some steps of the refinement process. However, as the mesh becomes more adapted to the problem this issue is reduced. 

\begin{figure}[h]
	\centering
	\includegraphics[width=0.9\textwidth]{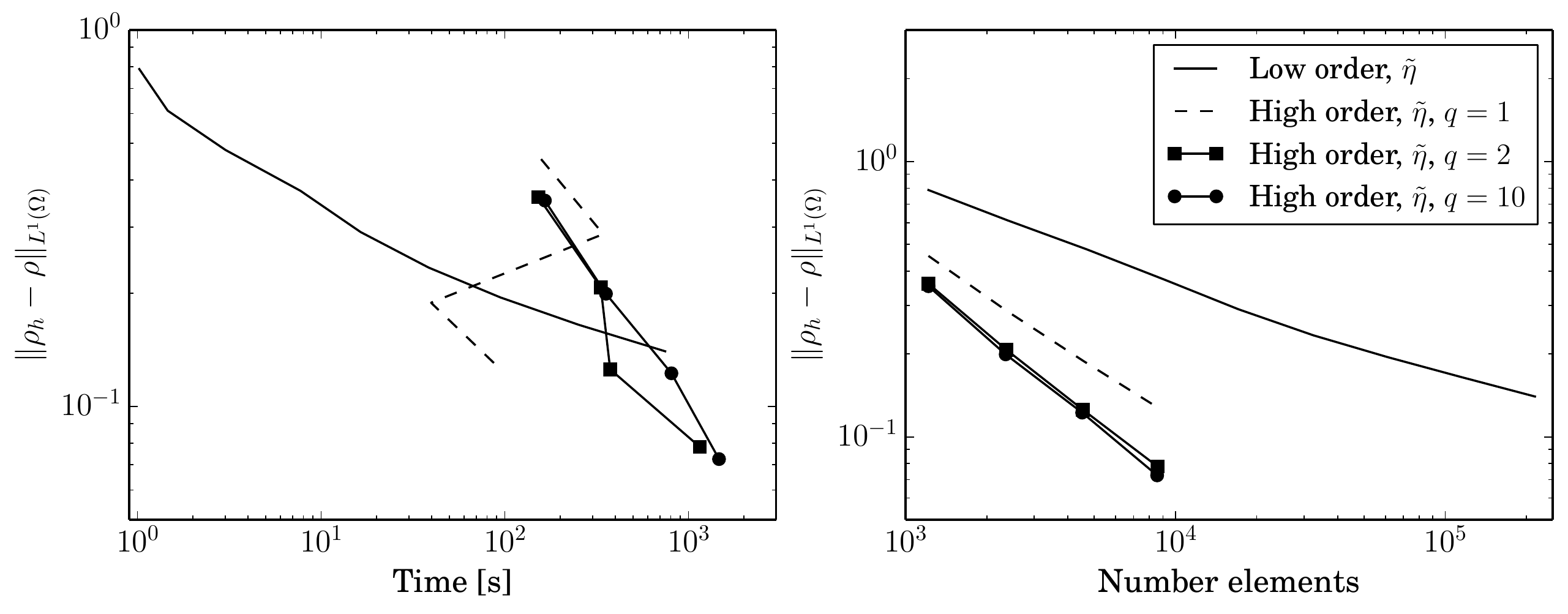}
	\caption{Time and elements convergence comparison for the reflected shock problem.}
	\label{fig.reflected-conv}
\end{figure}

\begin{figure}[!]
	\centering
	\setlength{\figwidth}{0.58\textwidth}
	\begin{subfigure}[t]{0.838\figwidth}
		\includegraphics[width=\textwidth,trim={200mm 170mm 120mm 100mm},clip]{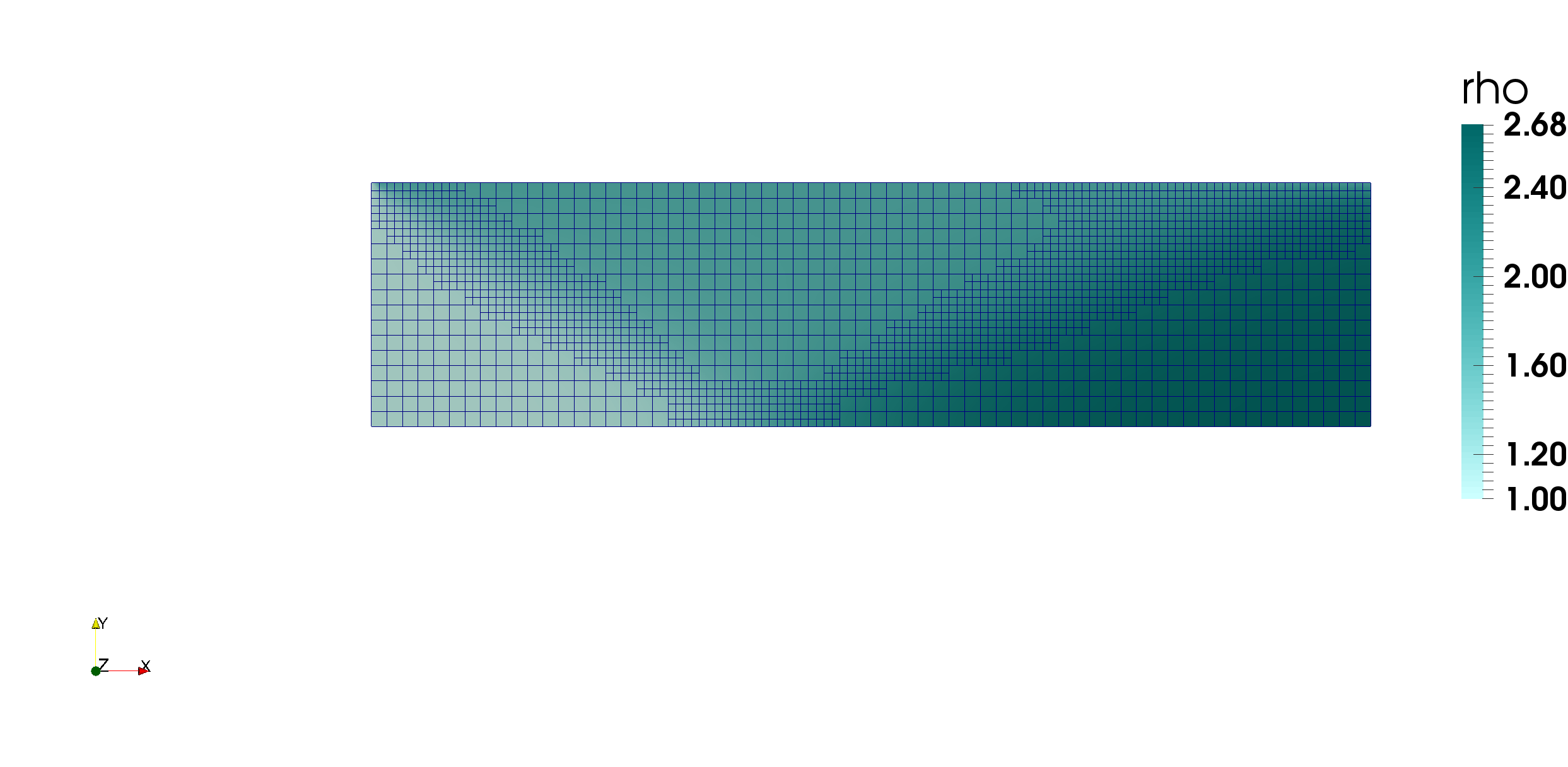}
	\end{subfigure}\hspace*{0.162\figwidth}
	
	\begin{subfigure}[t]{0.838\figwidth}
		\includegraphics[width=\textwidth,trim={200mm 170mm 120mm 100mm},clip]{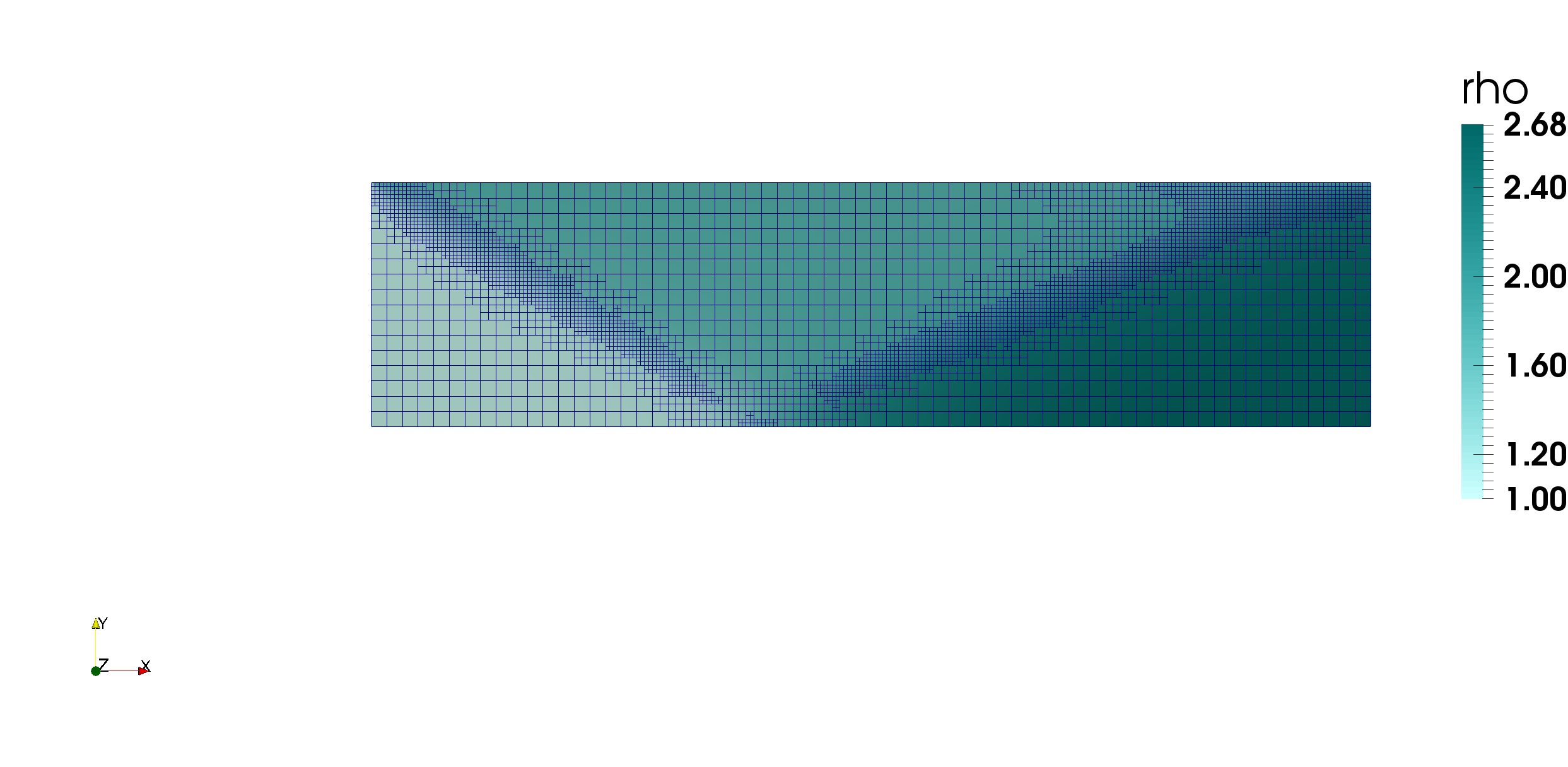}
	\end{subfigure}\hspace*{0.162\figwidth}
	
	\begin{subfigure}[t]{0.838\figwidth}
		\includegraphics[width=\textwidth,trim={200mm 170mm 120mm 100mm},clip]{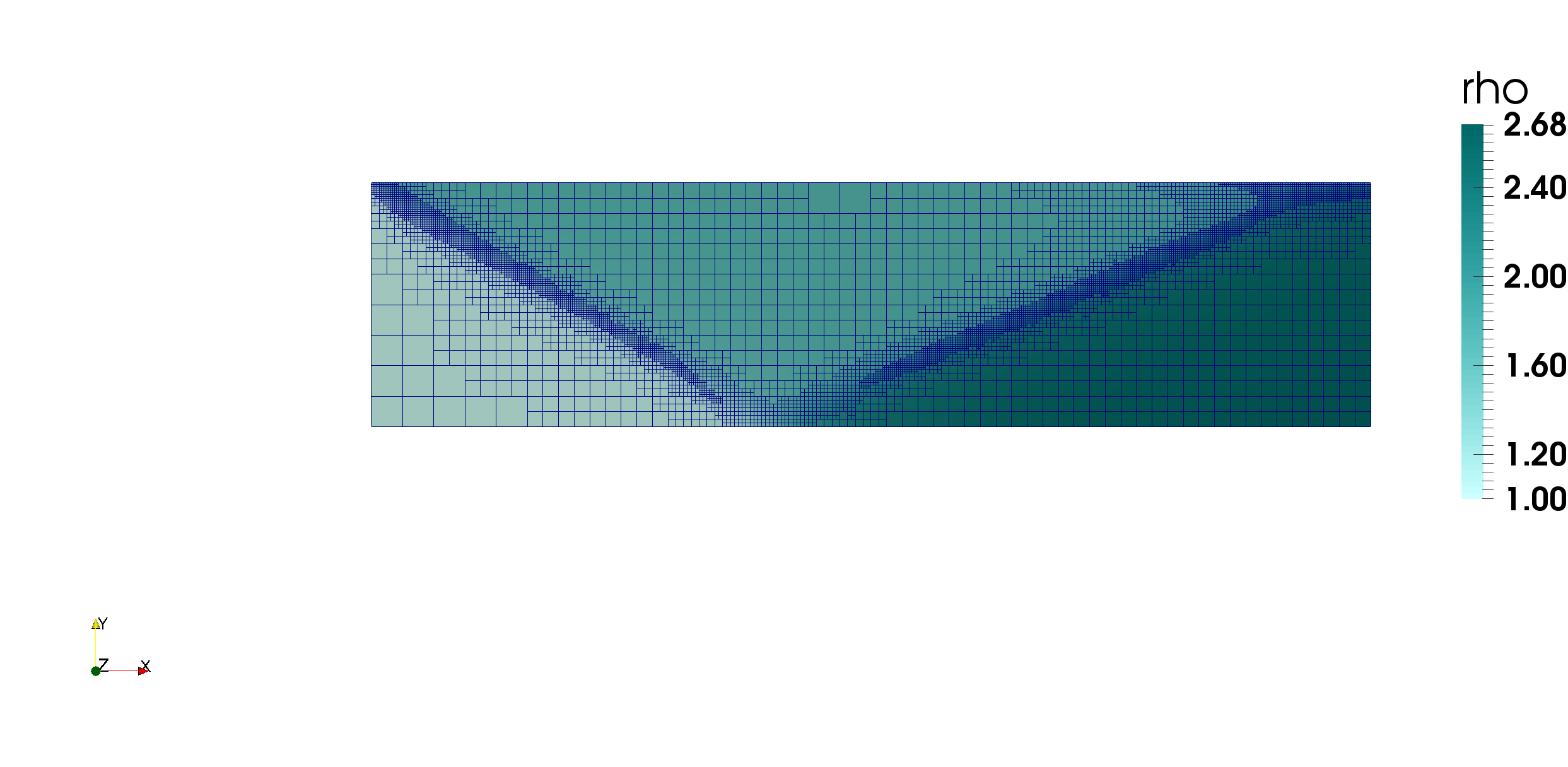}
	\end{subfigure}\hspace*{0.162\figwidth}
	
	\begin{subfigure}[t]{0.838\figwidth}
		\includegraphics[width=\textwidth,trim={200mm 170mm 120mm 100mm},clip]{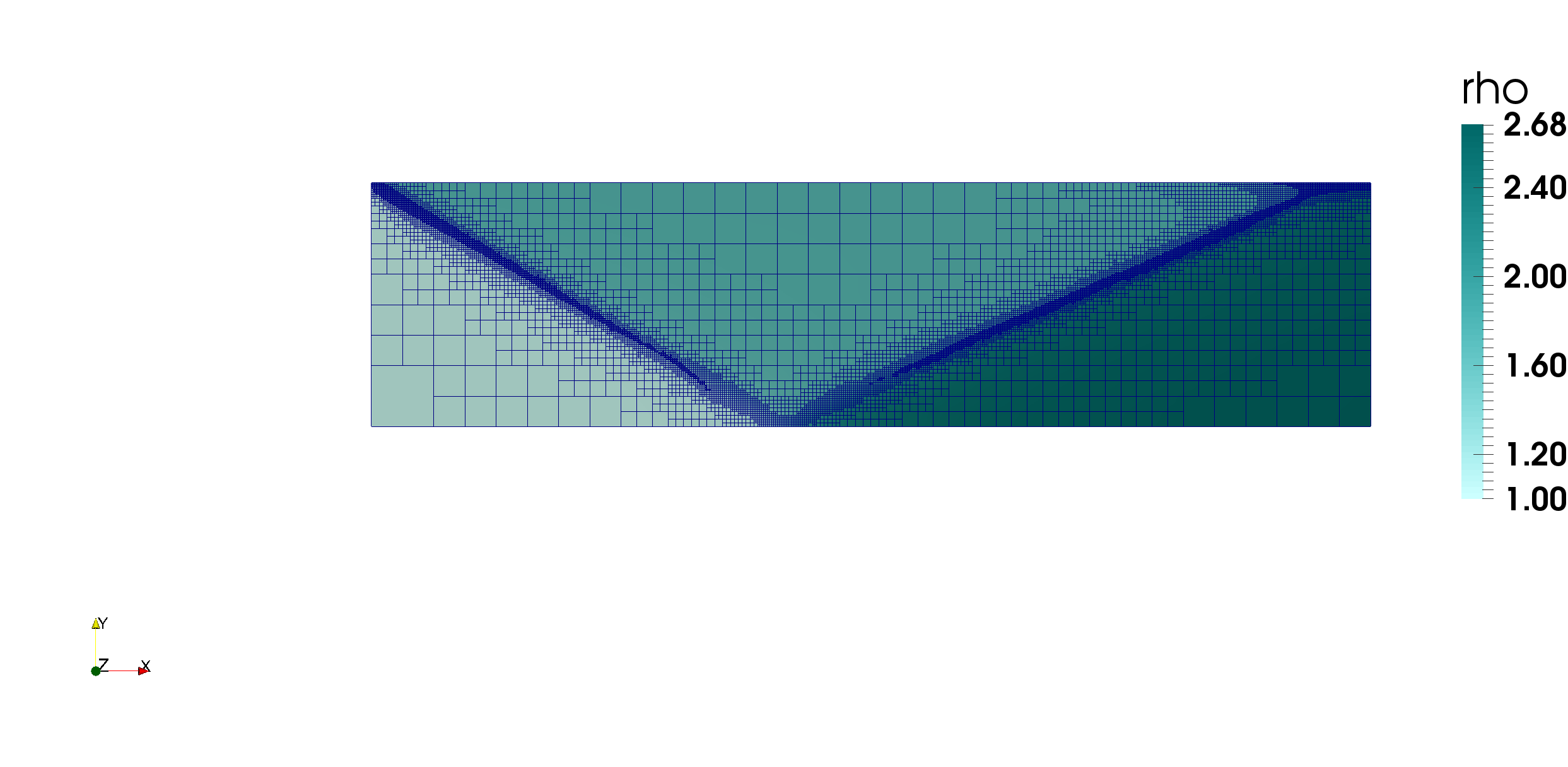}
	\end{subfigure}\hspace*{0.162\figwidth}
	
	\begin{subfigure}[t]{0.838\figwidth}
		\includegraphics[width=\textwidth,trim={200mm 170mm 120mm 100mm},clip]{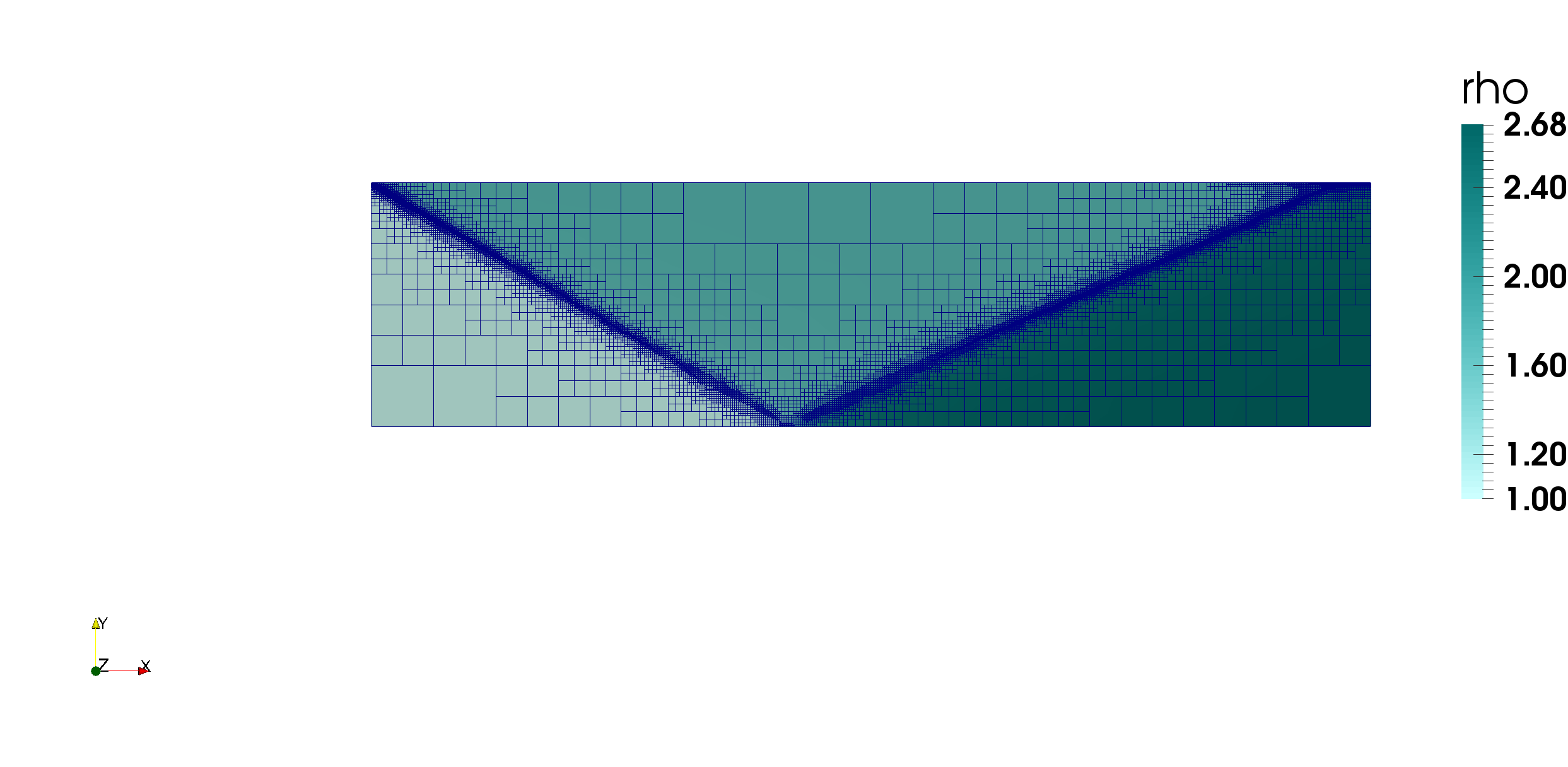}
	\end{subfigure}\hspace*{0.162\figwidth}
	
	\begin{subfigure}[t]{0.838\figwidth}
		\includegraphics[width=\textwidth,trim={200mm 170mm 120mm 100mm},clip]{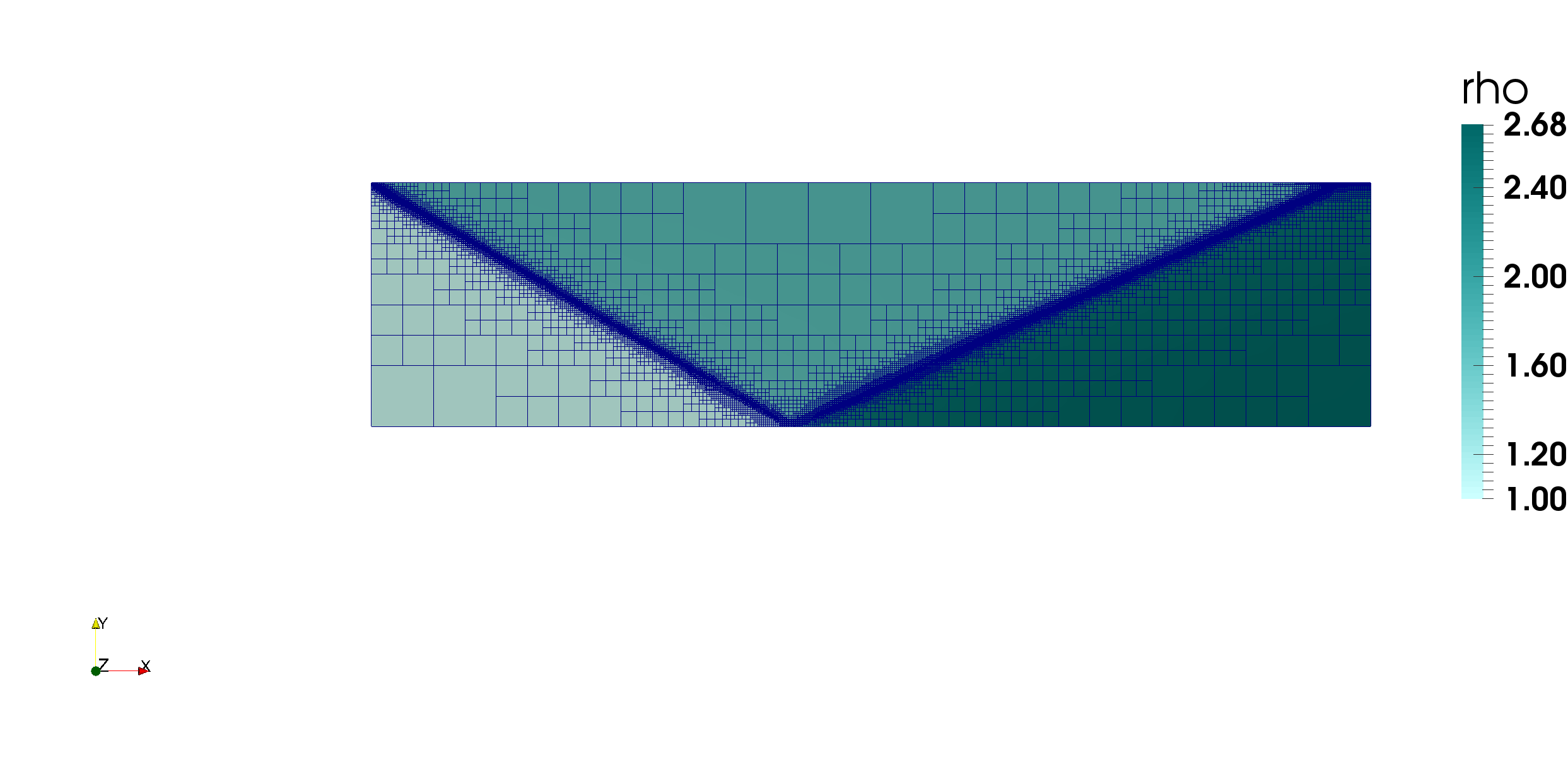}
	\end{subfigure}\hspace*{0.162\figwidth}
	
	\begin{subfigure}[t]{0.838\figwidth}
		\includegraphics[width=\textwidth,trim={200mm 170mm 120mm 100mm},clip]{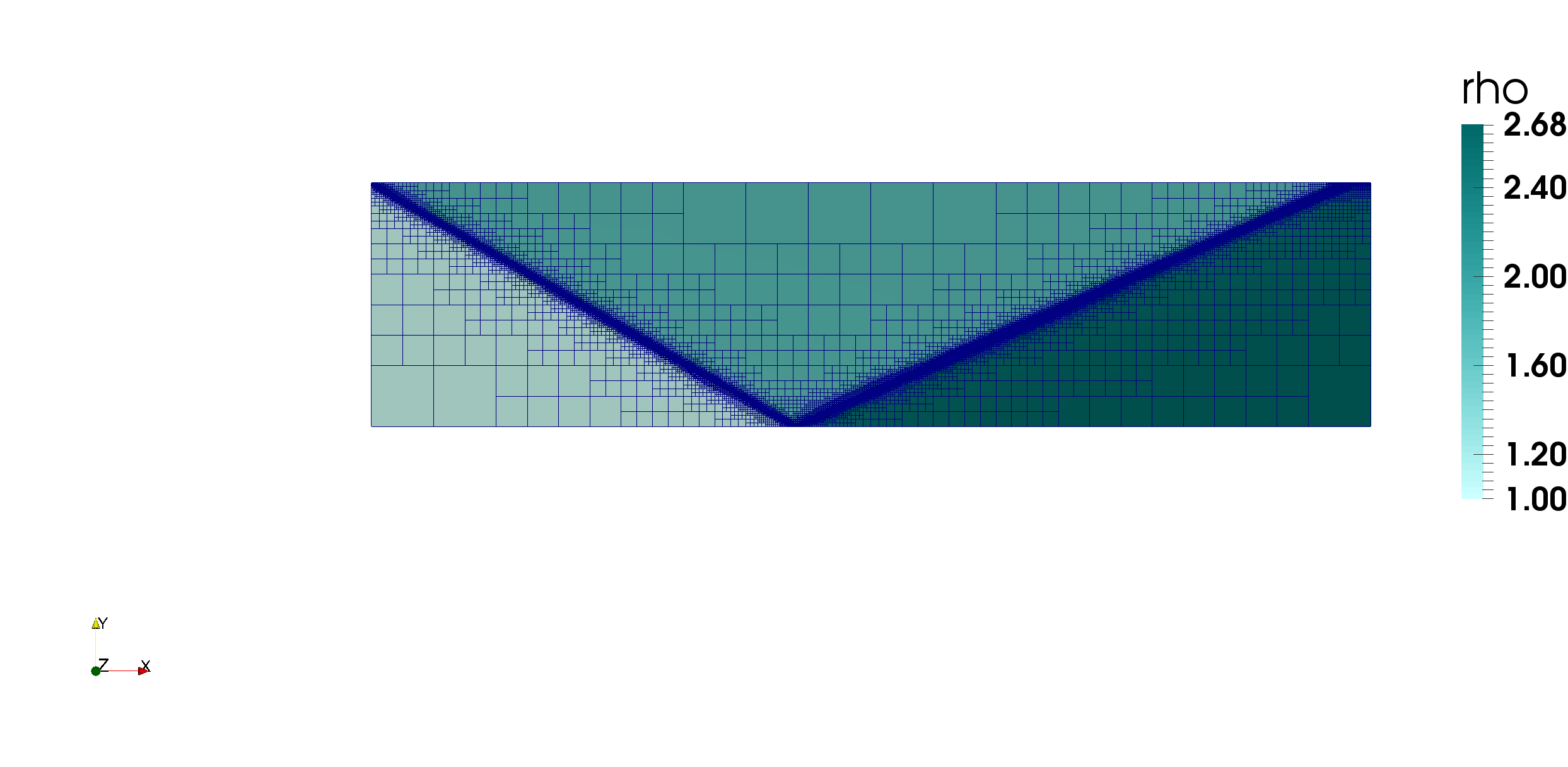}
	\end{subfigure}\hspace*{0.162\figwidth}
	
	\begin{subfigure}[t]{\figwidth}
		\includegraphics[width=\textwidth,trim={200mm 110mm 0 25mm},clip]{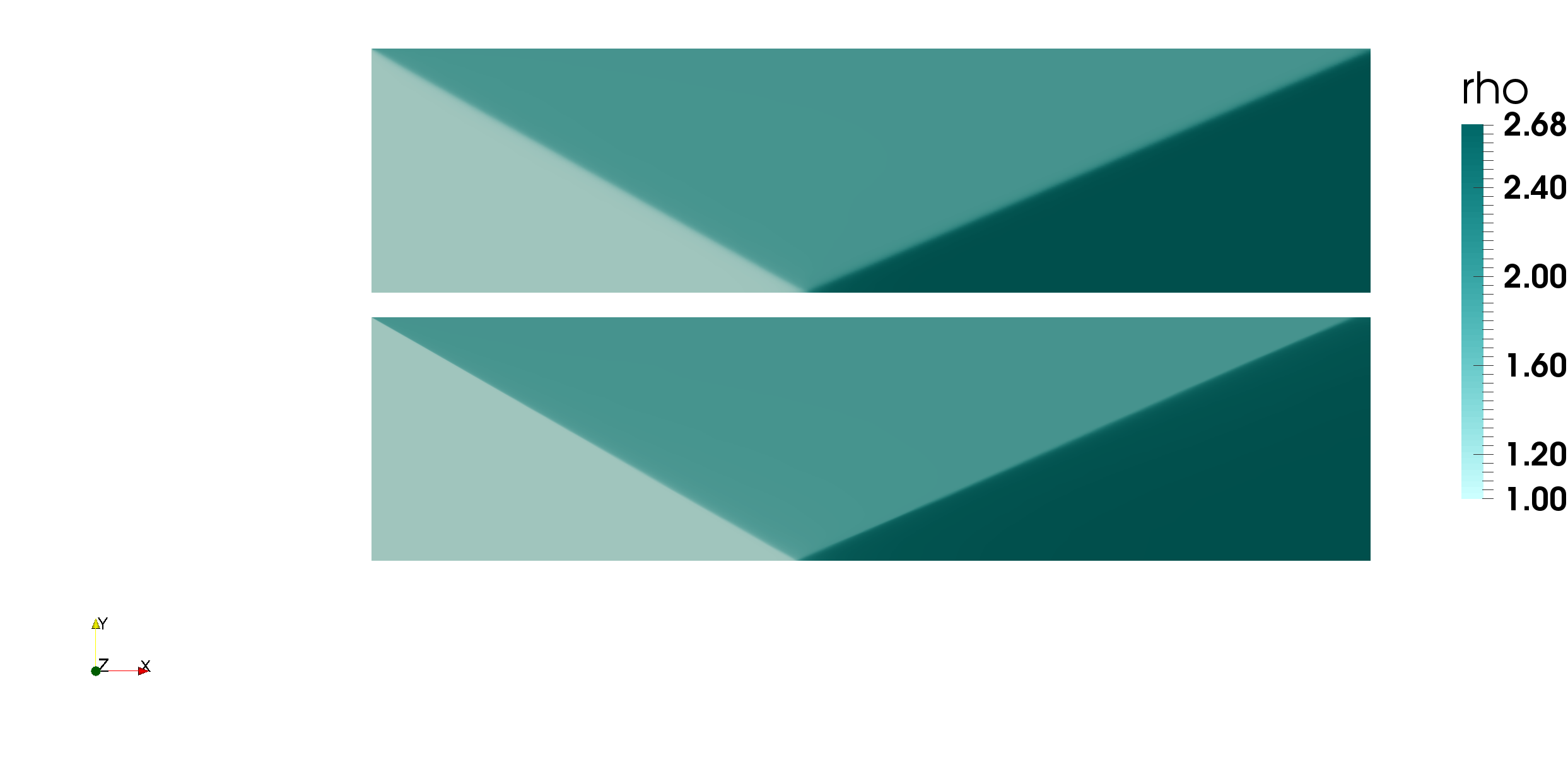}
	\end{subfigure}
	\caption{Evolution of the mesh refinement process. $\lestimator_K$ with low--order scheme is used. For the low--order scheme from top to bottom results have been obtained at refinement step 1, 2, 3, 4, 5, 6, and 7. The lower two figures are the high-order (top) and low-order (bottom) results at their last refinement step.}
	\label{fig.reflected-amr}	
\end{figure}

In Fig.\ \ref{fig.reflected-amr} we depict the refinement evolution for the graph Laplacian \rev{indicator} ($\lestimator_{\element}$) for the low-order scheme. In these figures it can be observed how the graph Laplacian \rev{strategy} is able to concentrate all the resolution at the shock location. Finally, we can conclude from the lower two figures that both schemes resolve the shocks properly after the mesh has been refined enough.

\section{Conclusions}\label{sec.conclusions}

The stabilization schemes in \cite{badia_monotonicity-preserving_2017,Badia2019c} have been extended and assessed in the \ac{amr} context for nonconforming hierarchical octree meshes. The work focuses in assessing the effectiveness of linear (first-order) and nonlinear (higher-order) stabilization. We focus the comparison in terms of accuracy versus computational time.

The results indicate that linear stabilization is more effective for coarse meshes. In this case, the computational cost required to solve the stiff nonlinear problem due to the nonlinear stabilization does not compensate the improvement in the accuracy. This is especially evident for linear systems of \acp{pde}. On the contrary, as the mesh is refined and properly adapted to the shocks, nonlinear stabilization pays the price. Even though increasing the value of $q$ in the nonlinear stabilization (a parameter that makes shocks sharper but hinders nonlinear convergence) improves accuracy, it turns to be more effective to refine the mesh further for low values of $q$. Nevertheless, it is worth mentioning that high-order method might exhibit nonlinear convergence problems for some meshes.

In addition, a new \rev{refinement strategy} have been proposed. The proposed \rev{indicator} is based on the graph Laplacian used in the definition of the stabilization method. Numerical results show that this shock detector is able to perform better that the well known Kelly estimator for problems with shocks or discontinuities.

\section*{Acknowledgments}
J. Bonilla gratefully acknowledges the support received from ''la Caixa'' Foundation through
its PhD scholarship program (LCF/BQ/DE15/10360010). S. Badia gratefully acknowledges the support received 
from the Catalan Government through the ICREA Acad\`emia Research Program. We acknowledge the financial support to CIMNE via the CERCA Programme  / Generalitat de Catalunya.

\bibliographystyle{siam}
\bibliography{references}

\end{document}